\def\naive{na\"ive}
\newcommand{\beq}{\begin{equation}}
\newcommand{\eeq}{\end{equation}}
\newcommand{\bea}{\begin{eqnarray*}}
\newcommand{\eea}{\end{eqnarray*}}
\newcommand{\case}[1]{{\noindent{\bf Case #1:}}}
\newcommand{\subcase}[1]{{\noindent{\it Case #1}:}}
\newcommand{\be}{\begin{equation}}
\newcommand{\ee}{\end{equation}}
\newcommand{\bx}{\begin{bmatrix}}
\newcommand{\ex}{\end{bmatrix}}
\theoremstyle{definition}
\theoremstyle{theorem}
\newtheorem{theorem}{Theorem}[section]
\newtheorem{conjecture}[theorem]{Conjecture}
\newtheorem{corollary}[theorem]{Corollary}
\newtheorem{lemma}[theorem]{Lemma}
\newtheorem{proposition}[theorem]{Proposition}
\newtheorem{claim}[theorem]{Claim}
 \newenvironment{cem}
{
    \begin{enumerate}
        \setlength{\topsep}{0pt}
        \setlength{\parskip}{0pt}
        \setlength{\partopsep}{0pt}
        \setlength{\parsep}{0pt}         
        \setlength{\itemsep}{0pt} 
}
{
    \end{enumerate} 
}
\def\cF{\mathcal{F}}
\def\cE{\mathcal{E}}
\def\cP{\mathcal{P}}
\def\cB{\mathcal{B}}
\def\eps{\varepsilon}
\begin{document}

\title{Generating $p$-extremal graphs}
\author{	Derrick Stolee\thanks{ %
	The author is supported in part by %
	the National Science Foundation grants %
	CCF-0916525 and DMS-0914815.%
	}\\ 
	Department of Mathematics\\
	Department of Computer Science\\
	University of Nebraska--Lincoln\\
	 \texttt{s-dstolee1@math.unl.edu} 
}

\maketitle

\begin{abstract}
	Define $f(n,p)$ to be the maximum number of edges in a graph
		on $n$ vertices with $p$ perfect matchings.
	Dudek and Schmitt proved  there exist constants $n_p$ and $c_p$
		so that for  even $n \geq n_p$, 
		$f(n,p) = \frac{n^2}{4}+c_p$. 
	A graph is \emph{$p$-extremal} if
		it has $p$ perfect matchings and $\frac{n^2}{4}+c_p$ edges.
	Based on  Lov\'asz's Two Ear Theorem
		and structural results of Hartke, Stolee, West, and Yancey,
		we develop a computational method for 
		determining $c_p$ and
		generating the finite set of graphs which 
		describe the infinite family of $p$-extremal graphs.
	This method extends the knowledge of
		the size and structure of $p$-extremal graphs
		from $p \leq 10$
		to $p \leq 27$.
	These values provide further evidence towards 
		a conjectured upper bound and
		prove  the sequence $c_p$ is not
		monotonic.
\end{abstract}

\def\arbitrarypagebreak{}

\section{Introduction}

A \emph{perfect matching}
	is a set of disjoint edges which 
	cover all vertices.
Let $\Phi(G)$ be the number of perfect matchings in a graph $G$.
For even $n$ and positive $p$,
	the function $f(n,p)$ is 
	the maximum number of edges in a graph $G$
	on $n$ vertices with $\Phi(G) = p$.
The exact behavior of $f(n,p)$
	is not completely understood.
This work extends the current knowledge on this problem
	by applying Lov\'asz'a Two Ear Theorem \cite{LovaszTwoEars}
	and structure theorems of
	Hartke, Stolee, West, and Yancey~\cite{HSWY}
	with an isomorph-free generation scheme~\cite{Stolee11}
	in order to compute $f(n,p)$
	for all $p \leq 27$
	as well as determine the exact structure of graphs 
	meeting these extremal values.



Hetyei first characterized 
	the extremal graphs with a single perfect matching and $n$ vertices
	(unpublished; see~\cite[Corollary 5.3.14]{LovaszPlummer})
	giving $f(n,1) = \frac{n^2}{4}$ for all even $n$.
Dudek and Schmitt~\cite{DudekSchmitt} generalized the problem
	for an arbitrary constant $p$
	and found the general form of $f(n,p)$ for
	sufficiently large $n$.
	
\begin{theorem}[Dudek, Schmitt~\cite{DudekSchmitt}]\label{thm:fnpform}
	For every $p \geq 1$, there exist constants $c_p, n_p$ so that
		for all even $n \geq n_p$,
		$f(n,p) = \frac{n^2}{4} + c_p$.
\end{theorem}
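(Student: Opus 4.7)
The plan is to establish $f(n,p) = n^2/4 + c_p$ for $n \geq n_p$ by proving matching lower and upper bounds whose dependence on $n$ is exactly $n^2/4$ up to an additive constant depending only on $p$.

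For the lower bound, I would exhibit, for each $p$, a family of $n$-vertex graphs $G_n$ with exactly $p$ perfect matchings and $n^2/4 + c_p$ edges. The construction fixes a bounded ``kernel'' $H$ whose $p$ perfect matchings are the source of the $p$ matchings in $G_n$, and attaches to $H$ a Hetyei-type bipartite graph on the remaining $n - |V(H)|$ vertices with a unique perfect matching; the join is arranged so that every perfect matching of $G_n$ is obtained by combining a perfect matching of $H$ with the unique matching of the extension. Counting edges yields $n^2/4 + c_p$ with $c_p$ depending only on $H$.

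For the upper bound I would argue by induction on $p$. The base case $p=1$ is Hetyei's theorem $f(n,1) = n^2/4$. For $p \geq 2$, let $G$ be an edge-maximal $n$-vertex graph with $\Phi(G) = p$. Using the Gallai--Edmonds decomposition of $G$ together with Lov\'asz's Two Ear Theorem applied to each elementary component, I would identify an edge $e$ lying in a proper nonempty subset of the perfect matchings of $G$. Then $G - e$ has some $p' < p$ perfect matchings, and the inductive hypothesis together with edge-maximality gives $|E(G)| \leq f(n,p') + 1 \leq n^2/4 + c_{p'} + 1$. Taking $c_p$ to be the maximum value of $c_{p'} + 1$ over all feasible such reductions, and matching this against the lower-bound construction, yields the claim.

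The hardest step is confirming that the additive term $c_p$ is genuinely a constant rather than a quantity growing with $n$. Concretely, one must show that for $n \geq n_p$, every edge-maximal graph with $p$ perfect matchings contains a large bipartite ``skeleton'' on $n - O_p(1)$ vertices, with any deviation from the Hetyei structure confined to a subgraph of size bounded purely in terms of $p$. The ear decomposition provides the structural handle: ears of length greater than one attached to the bipartite skeleton either multiply the matching count by a factor tending to infinity with $n$ --- which is forbidden --- or can be forced into a bounded defect region. Enumerating the admissible short-ear configurations gives a finite catalog of kernels $H$, and the maximum of their edge-excess over $n^2/4$ determines $c_p$.
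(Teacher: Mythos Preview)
Your two main ingredients are right and coincide with the Dudek--Schmitt argument the paper cites: a Hetyei-type extension for the lower bound, and an edge-deletion induction on $p$ for the upper bound (their Lemma~2.4 gives exactly $c_p \le \max_{q<p} c_q + 1$, and finding the required edge is trivial --- two distinct perfect matchings differ somewhere, so no Gallai--Edmonds or Two Ear machinery is needed).

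There is, however, a genuine gap in the way you assemble the two pieces. Your lower bound, as written, fixes one kernel $H$ and produces a single constant $c(H)$ with $f(n,p) \ge n^2/4 + c(H)$; your upper bound gives $f(n,p) \le n^2/4 + C$ for some other constant $C$. These need not coincide, so you cannot yet conclude that $f(n,p) - n^2/4$ is \emph{eventually constant} rather than oscillating in $[c(H),C]$. The missing observation (this is \cite[Lemma~2.1]{DudekSchmitt}, highlighted immediately after the theorem statement in the paper) is that your extension construction, applied with $H$ taken to be an \emph{extremal} graph on $n$ vertices, yields $f(m,p) - m^2/4 \ge f(n,p) - n^2/4$ for all even $m \ge n$. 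Thus the integer-valued sequence $f(n,p) - n^2/4$ is non-decreasing and bounded above, hence eventually constant; that constant is $c_p$. No matching of the two bounds is required or available.

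Your final paragraph conflates two different tasks. Proving that $c_p$ exists needs only monotonicity plus boundedness of the excess; ear decompositions and a catalog of kernels play no role. What you sketch there --- confining the non-bipartite defect to a region of size $O_p(1)$ and enumerating admissible configurations --- is essentially the programme of the \emph{rest} of this paper (Theorems~\ref{thm:sizebound} and~\ref{thm:hswystructure} and the search of Section~\ref{ssec:canondelete}), whose purpose is to \emph{compute} $c_p$, not to prove it exists.
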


From this theorem, 
	understanding the behavior of $f(n,p)$ relies
	on understanding the sequences $n_p$ and $c_p$.
An important step to proving Theorem \ref{thm:fnpform} is that
	if a graph exists with $p$ perfect matchings,
	$n$ vertices, and $\frac{n^2}{4} + c$ edges, 	
	then $f(m,p) \geq \frac{m^2}{4} + c$
	for all even $m \geq n$~\cite[Lemma 2.1]{DudekSchmitt}).
This motivates defining the \emph{excess} of $G$
		to be $c(G) = e(G) - \frac{1}{4} n(G)^2$,
		since any graph $G$ with $p$ perfect matchings
	gives a lower bound $c(G) \leq c_p$.
A graph $G$ with $\Phi(G) = p$ is \emph{$p$-extremal} 
	if it has $\frac{n^2}{4}+c_p$ edges.
In other words, $p$-extremal graphs have the largest excess
	out of all graphs with $p$ perfect matchings.
		
Dudek and Schmitt~\cite{DudekSchmitt} 
	computed $c_p$ for $2 \leq p \leq 6$
	and provided the exact structure of $p$-extremal graphs
	for $p \in \{2,3\}$.
Hartke, Stolee, West, and Yancey~\cite{HSWY}
	analyzed the structure of $p$-extremal graphs
	and found that for a fixed $p$, the infinite family
	is constructable from a finite set of 
	fundamental graphs.
Using McKay's graph generation program \texttt{geng} \cite{nauty}, 
	they discovered these funcdamental graphs
	then computed $c_p$ for $7 \leq p \leq 10$
	and described the structure of $p$-extremal graphs
	for $4 \leq p \leq 10$.
These values of $c_p$ are given in Table \ref{tab:OldValuesCP}.
The current-best known upper bound\footnote{
	This upper bound is given by $c_p \leq \max_{q < p} c_q + 1$~\cite[Lemma 2.4]{DudekSchmitt}
	and that $c_{p} \leq 4$ for all $p \leq 10$~\cite{HSWY}.
	} 
	is $c_p \leq p - 6$ for $p \geq 11$,
	while the best known lower bound 
	is $c_p \geq 1$ for all $p \geq 2$~\cite[Theorem 2.3]{HSWY}. 
	
\begin{table}[ht]
	\centering	
	\begin{tabular}[h]{|r|c|c|c|c|c|c|c|c|c|c|}
	\hline
		$p$ & 1 & 2 & 3 & 4 & 5 & 6 & 7 & 8 & 9 & 10  \\
		\hline
		$n_p$ & 2 & 4 & 4 & 6 & 6 & 6 & 6 & 6& 6& 6 \\
		$c_p$ & 0 & 1 & 2 & 2 & 2 & 3 & 3 & 3& 4& 4  \\
		\hline
		 & \multicolumn{6}{c|}{\cite{DudekSchmitt}} 
		 		& \multicolumn{4}{c|}{\cite{HSWY}} \\
		\hline
 	\end{tabular}
	\caption{\label{tab:OldValuesCP}Known values of $n_p$ and $c_p$.}
\end{table}

For an integer $n$, the \emph{double factorial} $n!!$
	is the product of all numbers at most $n$ with the same parity.
The values of $c_p$ are conjectured to be 
	on order $O\left(\left(\frac{\ln p}{\ln\ln p}\right)^2\right)$,
	given below.

\begin{conjecture}[Hartke, Stolee, West, Yancey~\cite{HSWY}]\label{conj:starremoval}
Let $p, k, t$ be integers so that $k \in \{1,\dots, 2t\}$ and
	$k(2t-1)!! \leq p < (k+1)(2t-1)!!$
	and set $C_p = t^2 - t + k - 1$.
Always $c_p \leq C_p$.
\end{conjecture}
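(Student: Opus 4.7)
The plan is to proceed by strong induction on $p$. For the base, the tabulated values of $c_p$ in Table~\ref{tab:OldValuesCP} already satisfy $c_p \leq C_p$, and the present paper extends this verification to $p \leq 27$, providing a large starting range. For the inductive step, fix a $p$-extremal graph $G$ on $n \geq n_p$ vertices with $e(G) = \tfrac{n^2}{4} + c_p$. By the structural results of Hartke, Stolee, West, and Yancey~\cite{HSWY}, one may reduce attention to a bounded-size ``fundamental piece'' of $G$, since the remaining vertices contribute to $n^2/4$ but not to the excess.

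The central tool is the star-expansion identity. For any vertex $v$ and any neighbor $u \in N(v)$, direct counting gives
\[
\Phi(G) = \sum_{u \in N(v)} \Phi(G - v - u)
\quad\text{and}\quad
c(G) - c(G - v - u) = d_G(v) + d_G(u) - n.
\]
Writing $q_u := \Phi(G - v - u) < p$, the inductive hypothesis yields $c(G - v - u) \leq C_{q_u}$, so
\[
c_p \leq \max_{u \in N(v)} \bigl( C_{q_u} + d_G(v) + d_G(u) - n \bigr).
\]
The goal is to choose $v$ so that the right-hand side is at most $C_p$. In the tight case $p = (2t-1)!!$ (so $k = 1$), the anticipated extremal graph is $K_{2t}$ together with a pendant perfect matching; every $v$ in the $K_{2t}$ has $d_G(v) = 2t - 1$ and each $q_u = (2t-3)!!$, so the excess drop is exactly $2t - 2 = C_{(2t-1)!!} - C_{(2t-3)!!}$, matching the jump in $C_p$ as $t$ increases by one.

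The main obstacle is the general case $k \geq 2$, where the fundamental piece need not contain a $K_{2t}$-clique and the induced partition $p = \sum_u q_u$ need not respect the double-factorial stratification defining $C_p$. To handle this, I would strengthen the inductive hypothesis to carry additional structural information about fundamental pieces realizing equality $c_p = C_p$; for instance, one might inductively assert the existence of a vertex whose local neighborhood mimics a $K_{2t}$-star and whose star-removal drops $p$ by exactly $(2t-1)!!$ and the excess by exactly $1$ when $k \geq 2$. The explicit catalogue of fundamental pieces produced by the algorithm of this paper for $p \leq 27$ is the natural source of evidence: it should both confirm that equality is attained only by graphs fitting this pattern and suggest the precise strengthened hypothesis under which the induction closes.
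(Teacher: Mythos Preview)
The statement you are trying to prove is labeled \emph{Conjecture} in the paper, and the paper does not prove it. The paper's only contribution toward Conjecture~\ref{conj:starremoval} is computational evidence: it computes $c_p$ for $11 \leq p \leq 27$ and tabulates these values alongside $C_p$ (Table~\ref{tab:NewValuesCP}), observing that $c_p \leq C_p$ throughout this range. There is no proof to compare your proposal against.

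As for the proposal itself, it is not a proof but a strategy sketch, and you openly acknowledge the gap. Your star-removal identity $c(G) - c(G-v-u) = d_G(v) + d_G(u) - n$ together with strong induction gives
\[
c_p \leq \max_{u \in N(v)} \bigl( C_{q_u} + d_G(v) + d_G(u) - n \bigr),
\]
but you do not establish that the right-hand side is bounded by $C_p$. The difficulty is real: the partition $p = \sum_u q_u$ into perfect-matching counts of $G-v-u$ is governed by the structure of $G$, not by the double-factorial stratification defining $C_p$, and there is no reason a priori that a vertex $v$ exists with the required degree and neighbor-degree control. Your proposed fix---strengthening the inductive hypothesis with structural information about graphs achieving equality---is stated only as an intention (``one might inductively assert\ldots''), with the explicit structural hypothesis left unspecified and the verification that it propagates left undone. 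Until that strengthened hypothesis is written down and shown to be preserved under the induction, the argument does not close. The conjecture remains open.
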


When  $p = k(2t-1)!!$, 
	a construction shows that   
	$c_p \geq t^2 - t + k - 1$~\cite{HSWY}.
		
This work utilizes the structure of the fundamental graphs
	to design a computer search.
By executing this search, we compute the sequence $c_p$ 
	and describe the structure of $p$-extremal graphs for all $p \leq 27$.
We begin by discussing the structure of $p$-extremal graphs in Section \ref{ssec:structurepm}.
Sections \ref{ssec:pmdeletion} through \ref{ssec:pruning} contain the 
	description of the computational technique,
	which are described in more detail at the end of Section \ref{ssec:structurepm}.
In Section \ref{ssec:results}, we discuss the results of executing 
	the computer search.

\subsection*{Notation}

In this work, $H$ and $G$ are graphs, 
	all of which will are simple: there are no loops or multi-edges.
For a graph $G$, $V(G)$ is the vertex set and $E(G)$ is the edge set.
The number of vertices is denoted $n(G)$ while $e(G)$ is the number of edges.

\section{Structure of $p$-Extremal Graphs}
\label{ssec:structurepm}

In this section, we describe the structure of $p$-extremal graphs
	as demonstrated by Hartke, Stolee, West, and Yancey~\cite{HSWY}.
	
	A graph is \emph{matchable} if it has a perfect matching.
	An edge $e \in E(G)$ is \emph{extendable}
		if there exists a perfect matching of $G$
		which contains $e$.
	Otherwise, $e$ is \emph{free}.
	The \emph{extendable subgraph} (\emph{free subgraph}) of $G$ is 
		the spanning subgraph containing all extendable (free) edges of $G$.

\def\odd{\operatorname{odd}}
	If the extendable subgraph of a matchable graph $G$ is connected,
		then $G$ is \emph{elementary}.
	A set $S \subset V(G)$ is a \emph{barrier}\footnote{We take the convention that the empty set is a barrier.}
		if the number of connected components 
		with an odd number of vertices
		in 
		$G - S$ (denoted $\odd(G-S)$) is equal to $|S|$.
Recall Tutte's Theorem~\cite{Tutte} states 
	$G$ is matchable if and only if $|S| \geq \odd(G-S)$ for
	all subsets $S \subseteq V(G)$,
	so barriers are the sets 
	which make this condition sharp.
Note that the singletons $\{v\}$ for each $v \in V(G)$
	is a barrier.

Elementary graphs and their barriers share important structure,
	which will be investigated thoroughly in Section \ref{ssec:barriers}.
If $G$ is both elementary and $p$-extremal, 
	then $n(G)$ is bounded
	by a function of $p$ and $c_p$.

\begin{theorem}[Corollary 5.8~\cite{HSWY}]\label{thm:sizebound}
	Let $p \geq 2$.
	If $G$ is a $p$-extremal elementary graph, then 
		$G$ has at most $N_p$ vertices,
		where $N_p$ is the largest even integer at most
		$3 + \sqrt{16p-8c_p-23}$.
\end{theorem}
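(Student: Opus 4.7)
The plan is to convert the stated vertex bound into a lower bound on the number of perfect matchings. Since $G$ is $p$-extremal, $e(G) = n(G)^2/4 + c_p$, and substituting into $(n(G) - 3)^2 \leq 16p - 8c_p - 23$ shows that the theorem is equivalent to the edge inequality
$$ e(G) \;\leq\; 2\,\Phi(G) \;+\; \frac{n(G)^2}{8} \;+\; \frac{3\,n(G)}{4} \;-\; 4 $$
for every elementary graph $G$ with $n(G) \geq 4$. The strategy is therefore to prove this edge bound directly and then rearrange.

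The naive double-counting of edge--matching incidences---every edge of an elementary graph lies in at least one perfect matching, and each matching uses $n(G)/2$ edges---gives $\Phi(G) \geq 2e(G)/n(G)$, which has the right shape but only yields the much weaker linear bound $n(G) \leq p + \sqrt{p^2 - 4c_p}$. To sharpen it I would apply Lov\'asz's Two Ear Theorem, building $G$ from a single edge by a sequence $G_0 \subset G_1 \subset \cdots \subset G_t = G$ of ear additions, each attaching either a proper ear of odd length at least $3$ or a pair of length-$1$ ears, with every intermediate $G_i$ still elementary. Writing $a_i, b_i$ for the numbers of new vertices and edges contributed at step $i$, one has $n(G) = 2 + \sum_i a_i$ and $e(G) = 1 + \sum_i b_i$, and the target inequality would follow provided each ear addition increases $\Phi$ in proportion to both $b_i$ and the current size of $G_{i-1}$, rather than merely by the constant $1$ that underlies the classical bound $\Phi(G) \geq e(G) - n(G) + 2$.

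To make this estimate quantitative I would invoke the canonical barrier decomposition developed in Section~\ref{ssec:barriers}. In an elementary graph, every perfect matching factors as a choice of a system of representatives sending a maximal barrier $S$ into the odd components of $G - S$, together with an internal perfect matching of each resulting subgraph; counting matchings via this recipe converts size information about the components of $G - S$ into a quadratic lower bound on $\Phi(G)$ in terms of $e(G)$ and $n(G)$. The hard part will be controlling the correction terms $-\tfrac{3\,n(G)}{4} + 4$ on the right-hand side of the target inequality: these account for the loss incurred when some components of $G - S$ are very small, so that they contribute only a single internal matching, and one must show that $p$-extremality forces these boundary contributions to aggregate into exactly the advertised linear correction. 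Once the per-component count is in hand, summing over the components of $G - S$ and rearranging yields $(n(G) - 3)^2 \leq 16p - 8c_p - 23$, from which $N_p$ is recovered by taking the largest even integer at most $3 + \sqrt{16p - 8c_p - 23}$.
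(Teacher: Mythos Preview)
This theorem is not proved in the present paper; it is quoted from \cite{HSWY} (as Corollary~5.8 there), so there is no in-paper proof to compare against directly. Your algebraic reformulation is nonetheless correct: the vertex bound is equivalent to showing $e(G) \le 2\Phi(G) + n(G)^2/8 + 3n(G)/4 - 4$ for every elementary $G$, and the paper's remark immediately following the theorem (that any excess $c(G)$ may replace $c_p$) confirms this is the right target.

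Your strategy after that point, however, has a genuine gap. You propose to extract a quadratic lower bound on $\Phi(G)$ by counting matchings through a maximal barrier and its odd components, but barriers do not supply such a bound: the barrier structure controls where \emph{free} edges can sit (Proposition~\ref{prop:freeextendable}, Lemma~\ref{lma:maxfree}), not how many matchings the extendable subgraph admits. The argument that actually works, and that your Two Ear paragraph nearly reaches, is to split $e(G)$ into extendable plus free edges and bound each piece separately. For the extendable subgraph $H$, the graded ear decomposition of Theorem~\ref{thm:lovasztwoears} starts at an even cycle (contributing $2$ to $\Phi$ and equal numbers of vertices and edges) and each non-refinable step adds at most two ears while increasing $\Phi$ by at least one; since every ear contributes exactly one more edge than vertex, $e(H) - n(H) \le 2(\Phi(H) - 2)$, i.e.\ $e(H) \le n + 2p - 4$. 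For the free edges, Lemma~\ref{lma:maxfree} gives at most $\binom{n/2}{2} = n^2/8 - n/4$. Adding the two yields your target inequality exactly. The quadratic term thus comes entirely from the free-edge bound, not from any matching count, so your proposed barrier-based estimate of $\Phi$ is aimed at the wrong piece of the decomposition and, as written, does not produce a proof.
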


Any excess $c(G)$
	for a graph $G$ with $\Phi(G) = p$
	can replace $c_p$ 
	in Theorem \ref{thm:sizebound}
	to give an upper bound on the order
	of a $p$-extremal elementary graph.

	Let $G$ be a graph with $\Phi(G) > 0$.
	A \emph{chamber} 
		is a subgraph of $G$
		induced by 
		a connected component of the extendable 
		subgraph of $G$.
Chambers are the maximal elementary subgraphs of $G$.
	Let $G_1,\dots,G_k$ be elementary graphs
		and for each $i$ let $X_i \subseteq V(G_i)$ 
		be a barrier in $G_i$.
	The \emph{spire} generated by $G_1,\dots,G_k$ on $X_1,\dots,X_k$
		is the graph given by disjoint union of $G_1,\dots, G_k$
		and edges $x_iv_j$ for all $x_i \in X_i$ and $v_j \in V(G_j)$,
		whenever $i < j$.
The following theorem states that all $p$-extremal graphs
	are spires with very specific conditions on $G_1,\dots,G_k$ and
	$X_1,\dots,X_k$.

\begin{theorem}[Theorem 5.9~\cite{HSWY}]\label{thm:hswystructure}
	Consider $p \geq 1$. 
	For each $p$-extremal graph $G$ in $\cF_p$:
	\begin{cem}
		\item $G$ is a spire generated by elementary graphs $G_1,\dots,G_k$
					on barriers $X_1,\dots,X_k$.
		\item The chambers of $G$ are $G_1,\dots, G_k$.
		\item For each $i < k$, $X_i$ is a barrier of maximum size in $G_i$.
		\item For all $i < j$, $\frac{|X_i|}{n(G_i)} \geq \frac{|X_j|}{n(G_j)}$
			and if equality holds, $G_i$ and $G_j$ can be swapped 
			to form another $p$-extremal graph.
		\item $c(G) \leq \sum_{i=1}^k c(G_i)$. 
			Equality holds if and only if
				$\frac{|X_i|}{n(G_i)} = \frac{1}{2}$ for all $i < k$.
		\item Let $p_i = \Phi(G_i)$. $p = \prod_{i=1}^k p_i$.
		\item For all $i$, $n(G_i) \leq N_{p_i}$.
		\item If $p_i = 1$, then $G_i \cong K_2$, $|X_i| = 1$.
	\end{cem}
\end{theorem}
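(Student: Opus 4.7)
The plan is to build the spire structure from the chamber decomposition and then use extremality to pin down the barrier conditions. Let $G$ be $p$-extremal. By definition, the chambers $G_1,\dots,G_k$ are the subgraphs of $G$ induced by the connected components of the extendable subgraph, which gives (2) immediately. Conclusion (8) is also chamber-level: a chamber with $p_i = 1$ is an elementary graph with a unique perfect matching, so its edge set equals that matching; since an elementary graph has a connected extendable subgraph, this forces $G_i \cong K_2$, and then $|X_i| = 1$ is the only nonempty barrier.

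The central structural step is establishing (1). Every edge between two distinct chambers is necessarily free, since an extendable edge lies in a perfect matching of $G$ and hence joins two vertices of the same extendable component. I would then analyze the free edges joining a pair of chambers $G_i, G_j$ using Lov\'asz's Two Ear Theorem together with the Gallai--Edmonds structure of elementary graphs: the endpoints of such free edges in $G_i$ must lie in a common barrier of $G_i$, otherwise one could rotate alternating paths to produce a new perfect matching through that edge, contradicting freeness. This concentrates all cross edges from $G_i$ on a barrier $X_i \subseteq V(G_i)$, and a symmetric argument determines a ``one-sided'' orientation showing that every vertex of the later chamber is used. Ordering the chambers by decreasing $|X_i|/n(G_i)$ then realizes $G$ as the spire generated by $G_1,\dots,G_k$ on $X_1,\dots,X_k$.

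With the spire structure in hand, edge counting gives
\[ c(G) = \sum_{i=1}^k c(G_i) + \sum_{1 \leq i < j \leq k} n(G_j)\bigl(|X_i| - \tfrac{1}{2} n(G_i)\bigr). \]
For an elementary graph of positive order, any barrier $X$ satisfies $|X| \leq \tfrac{1}{2} n$ (a Tutte-set count together with the existence of a perfect matching). This yields (5), with equality exactly when $|X_i|/n(G_i) = \tfrac{1}{2}$ for all $i < k$. Extremality forces each $X_i$ with $i<k$ to be of maximum size in $G_i$, giving (3), because replacing $X_i$ by a larger barrier of $G_i$ only increases the cross-edge count and preserves the spire structure. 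The swap argument for (4) is then direct: if $|X_i|/n(G_i) < |X_j|/n(G_j)$ for some $i<j$, exchanging $(G_i,X_i)$ with $(G_j,X_j)$ strictly increases the cross-edge total, contradicting extremality; equality of ratios allows the swap to preserve edge count.

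For (6), all cross edges of a spire are free, so every perfect matching of $G$ restricts to a perfect matching on each chamber, giving $p = \prod p_i$ by independent choice in each $G_i$. Conclusion (7) is an immediate application of Theorem \ref{thm:sizebound} to each chamber, noting that $G_i$ is itself $p_i$-extremal since any improvement to a chamber would propagate to $G$ via (5). The main obstacle is the structural step showing that all cross-chamber free edges emanate from a common barrier and target all vertices of the later chamber; this is where the Two Ear Theorem does the real work, and correctly establishing the ``one-sided'' orientation (barrier on one side, arbitrary vertex on the other, consistent with a linear order of chambers) is the technical heart of the proof.
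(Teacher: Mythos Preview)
This theorem is not proved in the present paper; it is quoted from \cite{HSWY} and used as a structural input for the search algorithm, so there is no in-paper argument to compare against. That said, your sketch has a genuine gap at the structural step.

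Your invocation of the Two Ear Theorem for (1) is misplaced. That theorem governs the internal ear structure of a single 1-extendable graph; it says nothing about how free edges join distinct chambers. What (1) actually requires is a partial order on the chambers such that every cross-chamber free edge runs from a barrier of a lower chamber to an arbitrary vertex of a higher chamber, together with extremality to force \emph{all} such edges to be present (the complete bipartite pattern in the definition of spire). That order comes from the canonical partition of a matchable graph into elementary pieces (Lov\'asz--Plummer, Chapter~5), not from ear decompositions. Your alternating-path heuristic is gesturing at why the endpoints on one side concentrate in a barrier, but it does not produce the linear order on chambers, nor the saturation on the other side, and the Two Ear Theorem does not supply either. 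You explicitly flag this as ``the technical heart,'' but the tool you name will not close it.

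Once (1) is granted, your edge-count identity is correct and the derivations of (3)--(6) and (8) are fine. For (7) you should be a bit more careful: you need each $G_i$ to be $p_i$-extremal as an elementary graph, and the replacement argument must check that substituting a denser elementary $G_i'$ with $\Phi(G_i')=p_i$ and a maximum barrier of at least the same size still yields a graph with exactly $p$ perfect matchings (i.e.\ that the new cross edges remain free). This holds, but it again rests on the barrier structure of the canonical partition rather than on anything ear-theoretic.
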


Theorem \ref{thm:hswystructure} 
	provides an automated procedure for describing 
	all $p$-extremal graphs.
Begin by determining all $q$-extremal elementary graphs
	for each factor $q$ of $p$.
Then, compute the maximum-size barriers for these graphs.
For all factorizations $p = \prod_{i=1}^k p_i$
	and combinations of $p_i$-extremal elementary graphs $G_i$ with maximum-size barrier $X_i$,
	compute $c(G)$ for the spire generated by $G_1,\dots,G_k$ on $X_1,\dots,X_k$.
The maximum excess of these graphs is the value $c_p$.	
Larger graphs are built by adding elementary graphs isomorphic to $K_2$
	and ordering the list of elementary graphs
	by the relative barrier size $\frac{|X_i|}{n(G_i)}$.

The most difficult part of this procedure is determining
	all $q$-extremal elementary graphs,
	which is the focus of the remainder of this work.
In \cite{HSWY}, the authors
	found the $q$-extremal elementary graphs by enumerating all graphs of 
	order $N_q$ using McKay's \texttt{geng} program \cite{nauty}
	until $N_q \geq 14$ for $q \geq 11$, where this technique
	became infeasible.
Our method greatly extends the range of computable values.
We split elementary graphs into  
	extendable and free subgraphs, which
	are generated in two stages of a computer search.
We begin by investigating the
	structure of extendable subgraphs
	in Section \ref{ssec:pmdeletion}.
In Section \ref{ssec:canondelete}, we utilize this structure
	to design an algorithm
	for generating all possible extendable subgraphs
	of $q$-extremal elementary graphs
	which focuses the search to a very 
	sparse family of graphs.
In Section \ref{ssec:barriers},
	we investigate the structure of free subgraphs and
	design an algorithm to generate
	maximal graphs with a given extendable subgraph.
This algorithm requires the full list of barriers for an extendable subgraph,
	so we describe in Section \ref{ssec:evolutionofbarriers}
	an on-line algorithm for computing this list.
In Section \ref{ssec:pruning}, we combine these techniques
	to bound the possible excess reachable
	from a given graph in order
	to significantly prune the search space.
These algorithms are combined to a final implementation
	and the results of the computation
	are summarized in Section \ref{ssec:results}.

\section{Structure of Extendable Subgraphs}
\label{ssec:pmdeletion}

	A connected graph is \emph{1-extendable} if every edge is extendable\footnote{
		This term comes from $k$-extendable graphs, where
			every matching of size $k$ extends to a perfect matching.
	}.
By the definition of elementary graph, 
	the extendable subgraph of an elementary graph is 1-extendable.
	
	A graph $H$ with $n(H) \geq 3$ is \emph{2-connected} if 
		there is no vertex $x \in V(G)$ so that $H-x$ is disconnected.

\begin{proposition}
	If $H$ is 1-extendable with $\Phi(H) \geq 2$,
		then $H$ is 2-connected. 
\end{proposition}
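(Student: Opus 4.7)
The plan is to argue by contradiction. Suppose $H$ is $1$-extendable with $\Phi(H)\ge 2$ but contains a cut vertex $v$. The first step is to record that $n(H)\ge 4$: if $H$ is connected and $1$-extendable with only one perfect matching then every edge lies in that matching, so $H$ is a disjoint union of edges, and connectedness forces $H=K_2$; since $\Phi(H)\ge 2$, this case is excluded. In particular the condition $n(H)\ge 3$ required for $2$-connectivity is automatic, and we only need to rule out a cut vertex.

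The main step is a parity argument on the components of $H-v$. Let $C_1,\dots,C_k$ with $k\ge 2$ be those components. Fix any perfect matching $M$ of $H$, which exists because $\Phi(H)\ge 2$. The vertex $v$ is matched in $M$ to some $u$; say $u\in C_1$. Because no edges of $H$ cross between distinct $C_j$'s, $M$ restricts to a perfect matching of $C_1-u$ and of each $C_j$ with $j\ne 1$. Hence $|C_1|$ is odd and $|C_j|$ is even for $j\ne 1$. These parities are invariants of $H$, independent of $M$, so in \emph{every} perfect matching of $H$ the vertex $v$ must be matched into the unique odd component $C_1$.

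Finally, since $H$ is connected and $v$ is a cut vertex, $v$ has a neighbor $w$ in some $C_j$ with $j\ne 1$. By the previous step the edge $vw$ lies in no perfect matching of $H$, contradicting the hypothesis that $H$ is $1$-extendable. Therefore $H$ has no cut vertex, and together with $n(H)\ge 3$ this gives $2$-connectedness.

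The only place where care is needed is verifying that the parities of the components of $H-v$ are determined by $H$ alone (equivalently, that $\{v\}$ acts as a barrier with $\odd(H-v)=1$, consistent with Tutte's theorem); once this is noted, the rest of the argument is one line. Everything else is just bookkeeping, so I do not anticipate a genuine obstacle.
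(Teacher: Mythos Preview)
Your proof is correct and follows essentially the same route as the paper's. Both arguments suppose a cut vertex $v$, observe that exactly one component of $H-v$ is odd, deduce that $v$ is always matched into that odd component, and conclude that any edge from $v$ to another component is free, contradicting $1$-extendability. The only cosmetic difference is that the paper invokes Tutte's theorem to get $\odd(H-v)\le 1$, whereas you read off the parities directly from a single perfect matching; the content is the same.
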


\begin{proof}
	Since $\Phi(H) \geq 2$, there are at least four vertices in $H$.
	Suppose $H$ was not 2-connected.
	Then, there exists a vertex $x \in V(H)$ so that $H-x$ has multiple components.
	Since $H$ has an even number of vertices,
		at least one component of $H-x$ must have an odd number of vertices.
	Since $H$ has perfect matchings, 
		Tutte's Theorem implies 
		exactly one such component $C$ has an odd number of vertices.
	Moreover, in every perfect matching of $H$, $x$ is matched to some vertex in $C$.
	Hence, the edges from $x$ to the other components never appear in perfect matchings,
		contradicting that $H$ was 1-extendable.
\end{proof}

2-connected graphs are characterized by \emph{ear decompositions}.
An \emph{ear} is a path given by vertices $x_0,x_1,\dots,x_k$
	so that $x_0$ and $x_k$ have degree at least three
	and $x_i$ has degree exactly two for all $i \in \{1,\dots,k-1\}$.
The vertices $x_0$ and $x_k$ are \emph{branch vertices}
	while $x_1,\dots,x_{k-1}$ are \emph{internal vertices}.
In the case of a cycle, the entire graph is considered to be an ear.
For an ear $\eps$, the \emph{length} of $\eps$ is the number of edges between
	the endpoints
	and its \emph{order} is the number of internal vertices between the endpoints.
We will focus on the order of an ear.
An ear of order $0$ (length $1$) is a single edge, 
	called a \emph{trivial} ear.

An \emph{ear augmentation} 
	is the addition of a path 
	between two vertices of the graph.
This process is invertible: an \emph{ear deletion} 
	takes an ear $x_0,x_1,\dots,x_k$ in a graph
	and deletes all vertices $x_1,\dots,x_{k-1}$
	(or the edge $x_0x_1$ if $k = 1$).
For a graph $H$, an ear augmentation is denoted $H+\eps$ 
	while an ear deletion is denoted $H-\eps$.
Every 2-connected graph $H$ has a sequence of graphs 
	$H_1 \subset \cdots \subset H_\ell = H$
	so that 
	$H_1$ is a cycle
	and for all $i \in \{1,\dots,\ell-1\}$,
	$H^{(i+1)} = H^{(i)}+\eps_i$ for some ear $\eps_i$~\cite{EarDecompositions}.

Lov\'asz's Two Ear Theorem gives
	the vital structural decomposition of 1-extendable graphs
	using a very restricted type of ear decomposition.
	A sequence $H_0 \subset H_1 \subset H_2 \subset \cdots \subset H_k$
		of ear augmentations
		is a \emph{graded ear decomposition}
		if each $H^{(i)}$ is 1-extendable.
	The decomposition is \emph{non-refinable} if
		for all $i < k$, there is no 1-extendable graph $H'$
		so that $H^{(i)} \subset H' \subset H^{(i+1)}$
		is a graded ear decomposition.

\begin{theorem}[Two Ear Theorem~\cite{LovaszTwoEars}; %
	See also~\cite{LovaszPlummer,TwoEarsProof}]
	\label{thm:lovasztwoears}
	If $H$ is 1-extendable, then there is a non-refinable graded ear decomposition
		$H_0 \subset H_1 \subset \cdots \subset H_k$
	so that $H_0 \cong C_{2\ell}$ for some $\ell$ 
		and each ear augmentation $H^{(i)} \subset H^{(i+1)}$
		uses one or two new ears, each with 
		an even number of internal vertices.
\end{theorem}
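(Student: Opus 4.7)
The plan is to prove the Two Ear Theorem by induction on $e(H)$ (equivalently, on the number of ear augmentations required). The base case is when $H$ is a single cycle: since $H$ is 1-extendable it is matchable, so $n(H)$ is even and the cycle $H_0 = H \cong C_{2\ell}$ gives the trivial decomposition, which is vacuously graded and non-refinable.

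For the inductive step, assume $H$ is 1-extendable and not a cycle. I would establish the following key lemma: there exists a 1-extendable proper subgraph $H^\ast \subset H$ obtained from $H$ by deleting either a single ear of even order, or a pair of ears each of even order, and moreover this can be done so that no 1-extendable graph lies strictly between $H^\ast$ and $H$. Granting the lemma, applying induction to $H^\ast$ yields a non-refinable graded ear decomposition $H_0 \subset \cdots \subset H^\ast$, which extends to $H$ by appending the one or two removed ears as the final step; the non-refinability at this last step is guaranteed by the ``no intermediate 1-extendable graph'' condition.

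The lemma itself is the main obstacle and is where I would spend the bulk of the work. The parity constraint is easy: if $H^\ast$ is matchable and so is $H$, then the number of vertices added is even, forcing any single removable ear to have even order and any removable pair to have total even order (so each individual ear must have even order once one checks the structure; this requires an auxiliary argument using the fact that an ear of odd order attached alone would destroy matchability). To produce the candidate ear(s), I would pick an edge $e$ of $H$ whose deletion, together with suppression of resulting degree-$2$ vertices into their ears, leaves a matchable graph, and then verify 1-extendability of the result. If $H$ is bipartite, a single ear suffices, and this can be seen via a direct argument on a bipartition and the perfect matchings through $e$; in the bipartite case every 1-extendable graph either is $K_2$ or has a removable even ear. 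If $H$ is non-bipartite, a single ear deletion may fail to preserve 1-extendability, and I would pair $e$ with a second edge $e'$ chosen so that together their removal respects every perfect matching constraint; the existence of such a partner is a Tutte/barrier-style argument, essentially analyzing the Gallai--Edmonds decomposition of $H-e$ to locate an obstruction and canceling it by a second ear deletion.

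Finally, I would enforce non-refinability by a minimality choice: among all eligible single ears or ear pairs whose deletion yields a 1-extendable proper subgraph, choose one such that $H^\ast$ is maximal (equivalently, the deleted ear or pair is minimal with respect to the subgraph ordering). Any 1-extendable graph $H'$ with $H^\ast \subset H' \subset H$ would contradict this maximality, giving the non-refinable extension step and completing the induction. The hardest piece of the argument, as noted, is the existence of the partner ear in the non-bipartite case; everything else is bookkeeping once the structure lemma is in hand.
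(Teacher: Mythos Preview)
The paper does not give its own proof of this theorem: it is stated with attribution to Lov\'asz and with references to \cite{LovaszTwoEars,LovaszPlummer,TwoEarsProof}, and is then used as a black box. So there is no proof in the paper to compare against; I can only assess your proposal on its own merits.

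Your inductive architecture is the standard one, and the reduction to a ``removable ear or ear-pair'' lemma is exactly the right shape. However, the proposal does not prove that lemma---you say so yourself---and the sketch you offer for the non-bipartite case (``a Tutte/barrier-style argument, essentially analyzing the Gallai--Edmonds decomposition of $H-e$ to locate an obstruction and canceling it by a second ear deletion'') is a placeholder rather than an argument. The published proofs locate the two ears through the structure of tight cuts (Lov\'asz) or via a direct argument on factor-critical components (Szigeti); invoking Gallai--Edmonds in the abstract does not hand you the partner ear, and this is precisely the step where all the content lives.

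A smaller but genuine gap is the parity claim for two-ear steps. Knowing that $n(H)$ and $n(H^\ast)$ are both even only forces the \emph{total} order of the removed pair to be even; it does not by itself force each ear individually to have even order. Your justification (``an ear of odd order attached alone would destroy matchability'') applies to single-ear steps but is the wrong lever for pairs, since in a two-ear step neither intermediate graph $H^\ast+\eps_1$ is required to be matchable. The correct argument tracks how the degree-$2$ internal vertices of each ear must be matched along the ear in any perfect matching of $H$, together with the barrier structure at the endpoints (compare the paper's Lemma~\ref{lma:twoeartype}); this needs to be written out. Until both of these steps are filled in, what you have is an outline of the standard proof strategy, not a proof.
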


We will consider making single-ear augmentations to build 1-extendable
	graphs, so we classify the graphs which appear after the
	first ear of a two-ear augmentation.
	A graph $H$ is \emph{almost 1-extendable} if 
		the free edges of $H$ appear in a single 
		ear of $H$.
The following corollary is a restatement of the Two Ear Theorem
	using almost 1-extendable graphs.

\begin{corollary}\label{cor:twoears}
	If $H$ is 1-extendable, then there is an ear decomposition
		$H_0 \subset H_1 \subset \cdots \subset H_k$
	so that $H_0 \cong C_{2\ell}$ for some $\ell$,
		each ear augmentation $H^{(i)} \subset H^{(i+1)}$ 
			uses a single ear of even order,
		each $H^{(i)}$ is either 1-extendable or almost 1-extendable,
		and if $H^{(i)}$ is almost 1-extendable
			then $H_{i-1}$ and $H^{(i+1)}$ are 1-extendable.
\end{corollary}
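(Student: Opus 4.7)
The plan is to refine the non-refinable graded ear decomposition from Theorem \ref{thm:lovasztwoears} into a sequence of single-ear augmentations by inserting an intermediate graph inside each two-ear step.  Starting from $H_0 \subset H_1 \subset \cdots \subset H_k$ as given by the Two Ear Theorem, single-ear steps are kept unchanged, and for each two-ear step $H^{(i+1)} = H^{(i)} + \eps_1 + \eps_2$ I would insert the intermediate graph $H' = H^{(i)} + \eps_1$ between $H^{(i)}$ and $H^{(i+1)}$.  After reindexing, every augmentation uses a single ear of even order (the parity condition is inherited from Theorem \ref{thm:lovasztwoears}), and the starting graph $H_0 \cong C_{2\ell}$ is unchanged.

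The heart of the argument is to check that each inserted intermediate $H'$ is almost 1-extendable while the surrounding graphs $H^{(i)}$ and $H^{(i+1)}$ remain 1-extendable.  The second fact is immediate, since both $H^{(i)}$ and $H^{(i+1)}$ already appear in the original Two Ear decomposition.  For the first, I would first observe that, because $\eps_1$ has an even number of internal vertices, any perfect matching of $H^{(i)}$ extends to a perfect matching of $H'$ by pairing consecutive internal vertices of $\eps_1$ along the ear edges that avoid its endpoints; in particular $H'$ is matchable.  Since every edge of $H^{(i)}$ is extendable in $H^{(i)}$ by 1-extendability, the same extension shows each such edge is still extendable in $H'$.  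Consequently every free edge of $H'$ must lie inside $\eps_1$, so the free subgraph of $H'$ is contained in a single ear of $H'$---this is precisely the definition of almost 1-extendable.

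The main obstacle is the technical verification that $\eps_1$ remains an ear of $H'$ in the strict sense that its internal vertices have degree exactly two and its endpoints have degree at least three.  The internal vertices are new, so this side is automatic.  For the endpoints, the proposition preceding the theorem says any 1-extendable graph with at least two perfect matchings is 2-connected, so the endpoints of $\eps_1$ already have degree at least two in $H^{(i)}$ and hence at least three in $H'$; the base case $H^{(i)} \cong C_{2\ell}$ is the only one in which the endpoints had degree exactly two, and the ear addition immediately promotes them to branch vertices.  With these pieces in place the refined decomposition satisfies each of the listed conditions and the corollary follows.
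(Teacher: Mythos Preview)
Your proposal is correct and follows exactly the approach the paper has in mind: the paper does not supply a proof at all, merely introducing the definition of \emph{almost 1-extendable} and then stating the corollary as ``a restatement of the Two Ear Theorem using almost 1-extendable graphs.''  Your argument---splitting each two-ear step $H^{(i)}\subset H^{(i)}+\eps_1+\eps_2$ by inserting $H^{(i)}+\eps_1$, and observing that every perfect matching of $H^{(i)}$ extends along the even-order ear so that all free edges of the intermediate graph lie in $\eps_1$---is precisely the intended unpacking, and the final clause of the corollary follows because the only non-1-extendable graphs in the refined sequence are these inserted intermediates, each sandwiched between two graphs from the original graded decomposition.
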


An important property of graded ear decompositions 
	is that $\Phi(H^{(i)}) < \Phi(H^{(i+1)})$,
	since the perfect matchings in $H^{(i)}$ 
	extend to perfect matchings of $H^{(i+1)}$ 
	using alternating paths within the augmented ear(s)
	and the other edges must appear in a previously uncounted
	perfect matching.

We use this theorem to develop our search space for
	the canonical deletion technique,
	forming the first stage of the search.
The second stage adds free edges
	to a 1-extendable graph with $p$ perfect matchings.
The structure of free edges is even more restricted,
	as shown in the following proposition.

\begin{proposition}[Theorems 5.2.2 \& 5.3.4~\cite{LovaszPlummer}]
	\label{prop:freeextendable}
	Let $G$ be an elementary graph.
	An edge $e$ is free if and only if the 
		endpoints are in the same barrier.
	If adding any missing edge to $G$ 
		increases the number of perfect matchings,
		then every barrier in $G$ of size at least two
		is a clique of free edges.
\end{proposition}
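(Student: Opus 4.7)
My plan is to prove the two assertions in turn, relying on the canonical partition of an elementary graph's vertex set into maximal barriers. The first assertion is a biconditional, so I would split it into its two directions; the second assertion then follows from the first together with a direct Tutte-type argument applied to a one-edge modification of $G$.

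For the forward direction of the first assertion, assume $e = uv$ is an edge of $G$ and both $u, v$ lie in some barrier $B$. Since $\odd(G - B) = |B|$, a counting consequence of Tutte's Theorem shows that in every perfect matching $M$ of $G$ the vertices of $B$ are matched bijectively to vertices lying in the $|B|$ distinct odd components of $G - B$: each odd component has odd order and so must contribute at least one vertex matched into $B$, and since $B$ has only $|B|$ vertices the bijection is exact and no edge of $M$ joins two vertices of $B$. In particular $uv \notin M$, and as this holds for every $M$, the edge $e$ is free.

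For the reverse direction, suppose $e = uv$ is a free edge. I would introduce the relation on $V(G)$ defined by $u \sim v$ iff $u = v$, or $uv \notin E(G)$, or $uv$ is a free edge of $G$, and invoke the canonical partition theorem for elementary graphs (the content of Theorem 5.2.2 in \cite{LovaszPlummer}), which states that for elementary $G$ the relation $\sim$ is an equivalence relation whose classes are exactly the maximal barriers. Granting it, $uv$ being free yields $u \sim v$, placing $u$ and $v$ in a common maximal barrier. I expect the transitivity of $\sim$ to be the main obstacle: the standard proof takes $u \sim v$ and $v \sim w$ together with a perfect matching $M$ assumed to contain the edge $uw$, and uses the connectivity of the extendable subgraph of the elementary graph $G$ to construct an $M$-alternating path that can be spliced into $M$ to produce a perfect matching using $uv$ or $vw$, contradicting one of the hypotheses.

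For the second assertion, suppose every missing edge of $G$, when added, strictly increases $\Phi(G)$, and let $B$ be a barrier of $G$ with $|B| \geq 2$. Pick distinct $u, v \in B$ and assume toward a contradiction that $uv \notin E(G)$. Set $G' = G + uv$; since $u, v \in B$, the graph $G' - B$ equals $G - B$, so $B$ remains a barrier of $G'$. The forward direction of the first assertion, which used only Tutte's Theorem and the barrier property of $B$, applies inside $G'$ and shows that no perfect matching of $G'$ contains the new edge $uv$. Hence the perfect matchings of $G'$ coincide with those of $G$, so $\Phi(G') = \Phi(G)$, contradicting the hypothesis. Therefore $uv \in E(G)$, and applying the first assertion again yields that $uv$ is free, so $B$ induces a clique of free edges.
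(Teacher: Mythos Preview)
The paper does not supply its own proof of this proposition; it is quoted verbatim as Theorems~5.2.2 and~5.3.4 of Lov\'asz and Plummer's \emph{Matching Theory} and used as a black box. So there is no in-paper argument to compare against.

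As for correctness: your forward direction and your proof of the second assertion are complete and correct. In particular, your observation that the forward direction needs only the barrier equality $\odd(G'-B)=|B|$ and Tutte's condition (not elementarity of $G'$) is exactly what makes the contradiction in the second part go through.

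The reverse direction, however, is not really proved: you invoke Theorem~5.2.2 of \cite{LovaszPlummer}, which is one of the two theorems the proposition is citing, and your one-line sketch of the transitivity of $\sim$ (``splice an $M$-alternating path into $M$'') hides essentially all of the content. The actual argument in Lov\'asz--Plummer is several pages and passes through the structure of tight cuts and the canonical partition; a single alternating-path swap does not suffice, because one must rule out the case that $uw\notin E(G)$ and simultaneously handle the interaction of the two hypothetical free/non-edges with a common vertex $v$. Since the paper itself only cites the result, your write-up is adequate for the paper's purposes, but you should be aware that the reverse implication is the substantive half and your sketch does not constitute an independent proof of it.
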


In Section \ref{ssec:barriers},
	we describe a technique for adding free edges
	to a 1-extendable graph.
In order to better understand the first stage, 
	we investigate what types of ear augmentations
	are allowed in a non-refinable graded ear decomposition.
	
\begin{lemma}\label{lma:oneartype}
	Let $H \subset H + \eps$ be a one-ear augmentation between
		1-extendable graphs $H$ and $H+\eps$.
	The endpoints of $\eps$ are 
				in disjoint maximal barriers.
\end{lemma}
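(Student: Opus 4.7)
The plan is to derive a contradiction from Tutte's Theorem applied to $H-u-v$, where $u,v$ denote the two endpoints of the ear $\eps$. Since $H$ and $H+\eps$ are both 1-extendable (hence of even order), the ear $\eps$ must have an even number of internal vertices $x_1,\ldots,x_{2k}$, with $k=0$ corresponding to a trivial ear.

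The first step is to exhibit a perfect matching of $H-u-v$. Because $H+\eps$ is 1-extendable, the edge of $\eps$ incident to $u$ (namely $uv$ if $k=0$, otherwise $ux_1$) lies in some perfect matching $M$ of $H+\eps$. Each internal vertex $x_i$ has degree exactly two in $H+\eps$ with both neighbors on $\eps$, so once $u$ is matched along the ear the entire alternation is forced: $M$ contains $\{uv\}$ in the trivial case and $\{ux_1,x_2x_3,\ldots,x_{2k}v\}$ in the nontrivial case. In either situation $v$ is also matched within $\eps$, so the restriction of $M$ to $V(H)\setminus\{u,v\}$ is a perfect matching of $H-u-v$.

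Next, I would suppose for contradiction that $u$ and $v$ share a common maximal barrier $B$ of $H$, and set $S=B\setminus\{u,v\}$. Then $(H-u-v)-S=H-B$, so the barrier condition yields
\[\odd((H-u-v)-S)=\odd(H-B)=|B|>|B|-2=|S|,\]
contradicting Tutte's Theorem for the matchable graph $H-u-v$. Hence $u$ and $v$ lie in distinct maximal barriers of $H$. Finally, the canonical partition theorem for elementary graphs (maximal barriers of an elementary graph partition the vertex set, as will be treated in Section \ref{ssec:barriers}) upgrades ``distinct'' to ``disjoint,'' completing the proof. The only subtle point is the forced propagation of the ear-matching from $u$ to $v$; the Tutte violation itself is a one-line computation.
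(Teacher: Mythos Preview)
Your proof is correct and follows essentially the same idea as the paper's: both arguments boil down to showing that if $u$ and $v$ lie in a common barrier then $H-u-v$ has no perfect matching, contradicting the extendability of the endpoint edge of $\eps$ in $H+\eps$. The only cosmetic difference is that the paper invokes Proposition~\ref{prop:freeextendable} (edges within a barrier are free) as a black box, whereas you unpack that step directly via Tutte's Theorem.
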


\begin{proof}		
	If the endpoints of $\eps$ were not in disjoint maximal barriers, then they
			are contained in the same maximal barrier.
		If an edge were added between these vertices,
			Proposition \ref{prop:freeextendable}
			states that this edge would be free.
		Since $\eps$ is an even subdivision of such an edge,
			the edges incident to the endpoints are
			not extendable,
		making $H+\eps$ not 1-extendable.
\end{proof}	

\begin{lemma}\label{lma:twoeartype}
	Let $H \subset H + \eps_1 + \eps_2$ be a non-refinable 
		two ear augmentation between 1-extendable graphs.
	\begin{cem}
			\item The endpoints of $\eps_1$ are within a maximal barrier of $H$.
			\item The endpoints of $\eps_2$ are within a different maximal barrier of $H$.
	\end{cem}
\end{lemma}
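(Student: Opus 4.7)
The plan is to prove both parts by combining Tutte's theorem with non-refinability, using a single engine: for an even ear $\eps$ with endpoints $u,v$ attached to a 1-extendable graph $H$, the graph $H+\eps$ is 1-extendable if and only if $H-u-v$ has a perfect matching. Indeed, every perfect matching of $H+\eps$ either matches the internal vertices of $\eps$ in pairs (a lift of a perfect matching of $H$, covering all edges of $H$ and the odd-indexed edges of $\eps$) or alternates along $\eps$ matching $u$ to $v$ through the ear (an extension of a perfect matching of $H-u-v$, covering the even-indexed edges of $\eps$, which include both edges at the branch vertices). Since $H$ is 1-extendable the first type always exists, so 1-extendability of $H+\eps$ is equivalent to the existence of the alternating extension.

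For Part (1), non-refinability implies that $H+\eps_1$ is not 1-extendable, so by the engine $H-u_1-v_1$ has no perfect matching. Tutte--Berge then provides a set $T$ with $u_1, v_1 \in T$ and $\odd(H-T)\geq|T|$; the matchability of $H$ forces equality, so $T$ is a barrier of $H$ containing both $u_1$ and $v_1$. Enlarging $T$ to a maximal barrier (which exists by finiteness) gives the desired $B_1$. By symmetry (the same argument with $\eps_2$ in place of $\eps_1$), there is a maximal barrier $B_2$ of $H$ containing $u_2$ and $v_2$.

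For Part (2), suppose for contradiction that $B_1=B_2=B$. I examine the edge $e$ of $\eps_1$ incident to $u_1$ inside $H+\eps_1+\eps_2$: a perfect matching containing $e$ is forced on all of $\eps_1$ by the alternating pattern, matching $u_1$ to $v_1$ through the ear, so such a matching exists if and only if $(H+\eps_2)-u_1-v_1$ has a perfect matching. Setting $S=B\setminus\{u_1,v_1\}$, the graph $[(H+\eps_2)-u_1-v_1]-S$ equals $(H+\eps_2)-B$, which is the disjoint union of $H-B$ and a (possibly empty) isolated path on the $k_2$ internal vertices of $\eps_2$; this path is disconnected from the rest because both endpoints of $\eps_2$ lie in $B$ and have been deleted. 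Since $H-B$ has $|B|$ odd components and $k_2$ is even, this graph has exactly $|B|$ odd components while $|S|=|B|-2$, violating Tutte. Hence $e$ is not extendable in $H+\eps_1+\eps_2$, contradicting its 1-extendability, so $B_1\neq B_2$.

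The step I expect to require the most care is the bookkeeping of the degenerate configurations in Part (2): when the endpoint sets of $\eps_1$ and $\eps_2$ overlap (for example $u_2=u_1$), when some $k_i=0$ so that an ear is trivial, or when $|B|=2$ so that $S$ is empty. In each such case both endpoints of $\eps_2$ are still deleted by the time $S$ is applied, so the internal segment of $\eps_2$ remains an isolated even (possibly empty) component and the inequality $\odd\geq|B|>|S|$ continues to hold, preserving the Tutte violation.
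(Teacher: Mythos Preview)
Your argument is correct and parallels the paper's: both parts ultimately rest on the fact that, for a 1-extendable $H$ and an even ear $\eps$ with endpoints $u,v$, the graph $H+\eps$ is 1-extendable if and only if $u$ and $v$ lie in different maximal barriers of $H$. The paper accesses this via Proposition~\ref{prop:freeextendable} (contract the ear to a single edge and test whether that edge is free), whereas you prove it from first principles with your matching-classification ``engine'' and Tutte--Berge, which makes your treatment more self-contained. In part~(2) the paper again cites Proposition~\ref{prop:freeextendable} to declare the boundary edges of the ears free when both ears span the same barrier $B$; your explicit odd-component count in $(H+\eps_2)-B$ carries out this step rigorously and, as you note, absorbs the degenerate configurations (overlapping endpoints, trivial ears, $|B|=2$) without extra work.
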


\begin{proof}
	(1) If the endpoints $a, b$ of $\eps_1$ span two different maximal barriers,		
		adding the edge $ab$ would add an extendable edge 
			by Proposition \ref{prop:freeextendable}.
		The perfect matchings of $H + ab$ and $H+\eps_1$ 
			would be in bijection depending on if $ab$ was used:
			if a perfect matching $M$ in $H+ab$ does not contain $ab$, 
			$M$ extends to a perfect matching in $H+\eps_1$ by
			taking alternating edges within $\eps_1$,
			with the edges incident to $a$ and $b$ not used;
			if $M$ used $ab$, the alternating edges along $\eps_1$ would
				use the edges incident to $a$ and $b$.
		Hence, $H + \eps_1$ is 1-extendable and this is a refinable graded
			ear decomposition.
		This contradiction shows that $\eps_1$ spans vertices within a single
			maximal barrier.
	
	(2) The endpoints $x, y$ of $\eps_2$ must be within a single maximal barrier
		by the same proof as (2), since otherwise $H+\eps_2$ would be 1-extendable
		and the augmentation is refinable. 
		However, if both $\eps_1$ and $\eps_2$ 
			span the same maximal barrier, 
			$H + \eps_1 + \eps_2$ is not 1-extendable.
		By Proposition \ref{prop:freeextendable},
			edges within a barrier are free.
		Hence, the perfect matchings of $H + \eps_1 + \eps_2$ 
			do not use the internal edges of $\eps_1$ and $\eps_2$ which are 
			incident to their endpoints.
		This contradicts 1-extendability, 
			so the endpoints of $\eps_2$ 
			are in a different maximal barrier
			than the endpoints of $\eps_1$.
\end{proof}

\section{Searching for $p$-extremal elementary graphs}
\label{ssec:canondelete}

Given $p$ and $c$, we aim to generate
	all elementary graphs $G$ with 
	$\Phi(G) = p$
	and $c(G) \geq c$.
If $c \leq c_p$, Theorem \ref{thm:sizebound}
	implies $n(G) \leq N_p \leq 3 + \sqrt{16p-8c-23}$.
In order to discover these graphs,
	we use the isomorph-free generation algorithm of~\cite{Stolee11}
	to generate 1-extendable graphs with up to $p$ perfect matchings
	and up to $N_p$ vertices.
This algorithm is based on Brendan McKay's 
	canonical deletion technique~\cite{McKayIsomorphFree}
	and generates graphs using ear augmentations while visiting each unlabeled graph only once.
This technique will generate 1-extendable graphs and almost 1-extendable graphs.
\def\cM{\mathcal{M}}
	Let $\cM^p$ be the set of 2-connected graphs $G$
		with $\Phi(G) \in \{2,\dots, p\}$
		that are either
		1-extendable or {almost 1-extendable}.
	$\cM^p_{ N_p}$ is the set of
		graphs in $\cM^p$ with at most $N_p$ vertices.

The following lemma is immediate from Corollary \ref{cor:twoears}.

\begin{lemma}
	\label{lma:pmdeletionclosed}
	For each graph $H \in \cM^p$, 
		either $H$ is an even cycle
		or there exists an ear $\eps$ so that $H-\eps$ is in $\cM^p$.
\end{lemma}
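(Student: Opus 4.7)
The plan is to split on whether $H$ is 1-extendable or almost 1-extendable (but not 1-extendable), using Corollary \ref{cor:twoears} as the main tool in each case. If $H$ is 1-extendable and $H \not\cong C_{2\ell}$, I would apply Corollary \ref{cor:twoears} to obtain a single-ear decomposition $C_{2\ell} = H_0 \subset H_1 \subset \cdots \subset H_k = H$ in which each step adds an ear of even order and each intermediate graph is 1-extendable or almost 1-extendable. Letting $\eps$ be the ear added in the final step, $H - \eps = H^{(k-1)}$ is 2-connected (as all graphs in an ear decomposition are) and lies in $\cM^p$: it is 1-extendable or almost 1-extendable by the corollary, with $\Phi(H^{(k-1)}) \in \{2,\ldots,p\}$ since $\Phi$ is non-decreasing along the decomposition, $\Phi(H_0) = 2$, and $\Phi(H^{(k-1)}) \leq \Phi(H) \leq p$.

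If instead $H$ is almost 1-extendable, let $\eps^*$ be the ear of $H$ containing all free edges; the Two Ear Theorem guarantees $\eps^*$ has even order. I would take $\eps = \eps^*$ and show $H - \eps^*$ is 1-extendable, which places it in $\cM^p$ and supplies 2-connectivity by the earlier proposition on 1-extendable graphs. Writing $\eps^*$ as the path $x_0, x_1, \ldots, x_k$ with $k-1$ even, the internal vertices have degree two in $H$, so every perfect matching of $H$ matches them either \emph{internally} (leaving $x_0, x_k$ matched outside $\eps^*$; Option A) or \emph{externally} (using edges $x_0 x_1$ and $x_{k-1} x_k$; Option B). An Option A matching exists since $x_0$ has degree at least three and hence a neighbor $y \notin V(\eps^*)$, and the edge $x_0 y$ is not free by hypothesis; any matching of $H$ containing $x_0 y$ is necessarily Option A. Restricting Option A matchings to $H - \eps^*$ yields perfect matchings of $H - \eps^*$.

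For full 1-extendability of $H - \eps^*$, I would argue every edge $e \in E(H - \eps^*)$ lies in some Option A matching of $H$: given any perfect matching $M_e \ni e$, if $M_e$ is Option B I exchange along an alternating cycle through $\eps^*$ (joining $x_0$ and $x_k$'s external partners in a fixed Option A matching) to produce an Option A matching still containing $e$. The subcase $k = 1$ is immediate, since no matching of $H$ uses the free edge $\eps^*$ and each matching of $H$ restricts directly. The main obstacle is this alternating-cycle exchange for $k \geq 3$: identifying the correct cycle and verifying the swap preserves $e$ and remains a valid perfect matching. I expect the argument to parallel the standard proof of Lov\'asz's Two Ear Theorem, with Lemma \ref{lma:twoeartype} supplying the barrier structure at $x_0, x_k$ that governs how Option A and Option B matchings pair up. Once 1-extendability of $H - \eps^*$ is in hand, the required bound $\Phi(H - \eps^*) \in \{2, \ldots, p\}$ follows since any 1-extendable graph on at least three vertices has at least two perfect matchings and $\Phi(H - \eps^*) \leq \Phi(H) \leq p$.
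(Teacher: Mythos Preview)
Your handling of the 1-extendable case is correct and matches the paper's (terse) appeal to Corollary~\ref{cor:twoears}. The almost 1-extendable case, however, has a gap: you set up the Option~A/Option~B dichotomy correctly and establish that Option~A matchings exist, but then reach for an alternating-cycle exchange that you yourself flag as the ``main obstacle'' and do not complete. This exchange is unnecessary.

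The observation you are missing is that \emph{no Option~B matchings exist at all}. You have already shown an Option~A matching exists; if an Option~B matching also existed, then every edge of $\eps^*$ would be extendable: Option~A uses the edges $x_1x_2, x_3x_4, \ldots, x_{k-2}x_{k-1}$ while Option~B uses $x_0x_1, x_2x_3, \ldots, x_{k-1}x_k$, and together these cover all of $E(\eps^*)$. But $H$ is not 1-extendable, so it has a free edge, which by hypothesis lies in $\eps^*$ --- a contradiction. Hence every perfect matching of $H$ is of Option~A type, and restriction gives a bijection between perfect matchings of $H$ and of $H-\eps^*$ (so in particular $\Phi(H-\eps^*)=\Phi(H)\in\{2,\dots,p\}$). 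Now 1-extendability of $H-\eps^*$ is immediate: any edge $e \in E(H-\eps^*)$ is extendable in $H$, its witnessing matching is necessarily Option~A, and the restriction is a perfect matching of $H-\eps^*$ containing $e$.

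The same style of argument repairs your justification that $\eps^*$ has even order. The Two Ear Theorem concerns 1-extendable graphs and does not apply directly to $H$. Instead, if $\eps^*$ had odd order, the two alternating patterns along the ear would again cover every edge of $\eps^*$, and both patterns must occur (your degree-three argument at $x_0$, applied symmetrically at $x_k$, shows each endpoint is matched outside in some perfect matching), forcing every edge of $\eps^*$ to be extendable --- the same contradiction.

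The paper itself simply declares the lemma immediate from Corollary~\ref{cor:twoears} and gives no further argument, so you are already supplying more detail than the paper does; you just need the observation above to close the almost 1-extendable case cleanly.
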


\def\del{\operatorname{del}}
\def\lab{\operatorname{lab}}
With this property, all graphs in $\cM^p_{N_p}$ can be generated 
	by a recursive process:
Begin at an even cycle $H_0 = C_{2\ell}$.
For each $H^{(i)}$, try adding each all ears $\eps$ of order $r$ to all pairs of vertices
	in $H^{(i)}$ where $0 \leq r \leq N_p - n(H^i)$ to 
	form $H^{(i+1)} + \eps$.
If $H^{(i+1)}$ is 1-extendable or $H^{(i)}$ is 1-extendable and $H^{(i+1)}$ is almost 1-extendable,
	recurse on $H^{(i+1)}$ until $\Phi(H^{(i+1)}) > p$.
While this technique will generate all graphs in $\cM^p_{N_p}$, 
	it will generate each \emph{unlabeled} graph several times.
In fact, the number of times an unlabeled $H \in \cM^p_{N_p}$
	appears is \emph{at least} the number of ear decompositions
	$H_0 \subset \cdots \subset H_{k} \subset H$
	which match the conditions of Corollary \ref{cor:twoears}.
	
We will remove these redundancies in two ways.
First, we will augment using pair orbits of vertices in $H^{(i)}$.
Second, we will \emph{reject} some augmentations
	if they do not correspond with a ``canonical" 
	ear decomposition of the larger graph.
This technique is described in detail in \cite{Stolee11}.	
	
	Let $\del(H)$ be a function which takes a graph $H \in \cM^p$
		and returns an ear $\eps$ in $H$
		so that $H - \eps$ is in $\cM^p$.
	This function $\del(H)$ is a \emph{canonical deletion}
		if for any two $H_1, H_2 \in \cM^p$ so that $H_1 \cong H_2$,
		there exists an isomorphism $\sigma : H_1\to H_2$
		that  maps $\del(H_1)$ to $\del(H_2)$.
	
	Given a canonical deletion $\del(H)$, 
		the \emph{canonical ear decomposition} at $H$
		is given by the following iterative construction:
		(i) Set $H_0 = H$ and $i = 0$.
		(ii) While $H^{(i)}$ is not a cycle, define $H_{i-1} = H^{(i)} - \del(H^{(i)})$ 
		and decrement $i$.
	When this process terminates,
		what results is an ear decomposition 
	$ H_{-k} \subset H_{-(k-1)} \subset \cdots \subset H_{-1} \subset H_0$
	where $H_{-k}$ is isomorphic to a cycle and $H_0 = H$.

A simple consequence of this definition is that
	if $H_{-1} = H - \del(H)$, then
	the canonical ear decomposition of $H$
	begins with the canonical ear decomposition of $H_{-1}$
	then proceeds with the augmentation $H_{-1} \subset H_{-1} + \del(H) = H$.
Applying isomorph-free generation algorithm of~\cite{Stolee11}
	will generate all unlabeled graphs in $\cM^p$
	without duplication 
	by generating ear decompositions using
	all possible ear augmentations
	and rejecting any augmentations
	which are not isomorphic to the canonical deletion.
	
In order to guarantee the canonical deletion $\del(H)$
	satisfies the isomorphism requirement,
	the choice will depend on a canonical labeling.
	A function $\lab(H)$ which takes a labeled graph $H$
		and outputs a bijection $\sigma_H : V(H) \to \{1,\dots,n(H)\}$
		is a \emph{canonical labeling} 
		if for all $H_1 \cong H_2$
		the map $\pi : V(H_1)\to V(H_2)$
		defined as $\pi(x) = \sigma_{H_2}^{-1}(\sigma_{H_1}(x))$
		is an isomorphism.
	The canonical labeling $\sigma_H = \lab(H)$ on the vertex set 
		induces a label $\gamma_H$ on the ears of $H$.
	Given an ear $\eps$ of order $r$ between endpoints $x$ and $y$,
		let $\gamma_H(\eps)$ be the triple $(r, \min\{\sigma_H(x),\sigma_H(y)\}, \max\{\sigma_H(x),\sigma_H(y)\})$.
	These labels have a natural lexicographic ordering
		which minimizes the order of an ear and then minimizes
		the pair of canonical labels of the endpoints.
In this work, the canonical labeling $\lab(H)$ 
	is computed using McKay's \texttt{nauty} library~\cite{nauty, HRnauty}.
We now describe the canonical deletion $\del(H)$ 
	which will generate a canonical ear decomposition
	matching Corollary \ref{cor:twoears} 
	whenever given a graph $H \in \cM^p$.
	
By the proof of Lemma \ref{lma:pmdeletionclosed},
	we need all almost 1-extendable graphs $H$ to have $H-\eps$ 
	be 1-extendable, but 1-extendable graphs $H$ may have $H-\eps$
	be 1-extendable or almost 1-extendable, depending on 
	availability.
Also, since we are only augmenting by ears of even order,
	we must select the deletion to have this parity.
	
The following sequence of choices 
	describe the method for selecting 
	a canonical ear to delete from a graph $H$ in $\cM_{N_p}^p$:

\begin{enumerate}
	\item If $H$ is almost 1-extendable, 
		select an ear $\eps$ so that $H - \eps$ is 1-extendable.
		By the definition of almost 1-extendable graphs, there is a unique such choice.
		
	\item If $H$ is 1-extendable and
		there exists an ear $\eps$ so that $H - \eps$ is 1-extendable,
			then select such an ear with minimum value $\gamma_H(\eps)$.
			
	\item If $H$ is 1-extendable and 
			no single ear $\eps$ has the deletion $H - \eps$ 1-extendable, then
			select an even-order ear $\eps$ so that there is a disjoint even-order ear $\eps'$ 
			so that $H-\eps$ is almost 1-extendable
			and
			$H - \eps - \eps'$ is 1-extendable.
			Out of these choices for $\eps$, 
				select $\eps$ with minimum value $\gamma_H(\eps)$.
\end{enumerate}

See \cite{Stolee11} for a more detailed description of the isomorph-free
	properties of the canonical deletion strategy.

The full generation algorithm, including augmentations,
	checking canonical deletions, as well as some 
	optimizations and pruning techniques, is described in Section \ref{ssec:results}.	
We now investigate how to find $p$-extremal elementary graphs 
	using 1-extendable graphs in $\cM^p$.
In the following section, we discuss how to fill a 1-extendable
	graph $H$ with free edges without increasing the number of perfect matchings.

\section{Structure of Free Subgraphs}
\label{ssec:barriers}

By Proposition \ref{prop:freeextendable},
	the free edges within an elementary graph
	have endpoints within a common barrier.
This implies that the structure of the free edges
	is coupled with the structure of barriers in $G$.
In this section, we demonstrate that 
	the structure of the free subgraph of
	a $p$-extremal elementary graph 
	depends entirely on the structure 
	of the barriers in the extendable subgraph.
This leads to a method to generate
	all maximal sets of free edges that can be added
	to a 1-extendable graph.
Section \ref{ssec:evolutionofbarriers} describes
	a method for quickly computing the list of barriers
	of a 1-extendable graph using
	an ear decomposition.
In particular, this provides an on-line algorithm
	which is implemented along with the generation
	of canonical ear decompositions.
Finally, Section \ref{ssec:pruning} 
	combines the results of these sections into 
	a very strict condition which is used to prune
	the search tree.

	Let $G$ be an elementary graph.
	The \emph{barrier set} $\cB(G)$ 
		is the set of all barriers in $G$.
	The \emph{barrier partition} $\cP(G)$ 
		is the set of all maximal barriers in $G$.
The following lemmas give some properties of
	$\cP(G)$ and $\cB(G)$ 
	when $G$ is elementary.

\begin{lemma}[Lemma 5.2.1~\cite{LovaszPlummer}]
	\label{lma:barrierpartition}
	For an elementary graph $G$, $\cP(G)$
		is a partition of $V(G)$.
\end{lemma}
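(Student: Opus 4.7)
The plan is to verify the two defining properties of a partition: every vertex of $V(G)$ lies in some element of $\cP(G)$, and distinct elements of $\cP(G)$ are disjoint.

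For coverage, I would first show that $\{v\} \in \cB(G)$ for every $v \in V(G)$. Since $G$ is elementary it is in particular matchable, so any perfect matching $M$ contains a unique edge $vu$; then $M \setminus \{vu\}$ is a perfect matching of $G - \{v, u\}$, which forces $G - v$ to have exactly one odd component, namely the one containing $u$. Hence $\odd(G - \{v\}) = 1 = |\{v\}|$, so $\{v\}$ is a barrier, and by finiteness of $V(G)$ it extends to some maximal barrier containing $v$.

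For disjointness, it suffices to prove that if $B_1, B_2 \in \cB(G)$ share a common vertex, then $B_1 \cup B_2 \in \cB(G)$; two distinct maximal barriers that intersected would then both be properly contained in this strictly larger barrier, contradicting maximality. To establish this key claim, I would fix a perfect matching $M$ of $G$ and invoke two standard observations: for any barrier $B$ of a matchable graph, no edge of $M$ is internal to $B$ and $M$ induces a bijection from $B$ onto the odd components of $G - B$; moreover, in an elementary graph every component of $G - B$ is odd. A careful comparison of the components of $G - B_1$ and $G - (B_1 \cup B_2)$ via $M$ then yields $\odd(G - (B_1 \cup B_2)) \geq |B_1 \cup B_2|$, which together with the Tutte inequality $\odd(G - S) \leq |S|$ (valid in matchable $G$) forces equality.

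The main obstacle is the component-counting in the second step: removing a vertex $v \in B_2 \setminus B_1$ from its containing component $C$ of $G - B_1$ may split $C$ into a mixture of odd and even subpieces, and one must verify that the net change in the odd-component count at each such deletion is exactly $+1$, so that the running total grows from $|B_1|$ to $|B_1 \cup B_2|$. I would handle this by inducting on $|B_2 \setminus B_1|$, using at each step the fact that $v$ is matched by $M$ to a vertex outside $B_1 \cup B_2$ (because $v$ lies in both $B_2$ and the barrier $B_1 \cup B_2$ in formation) to constrain the parities of the resulting pieces. This is the classical argument of Lov\'asz and Plummer, and for the final write-up I would invoke their Lemma 5.2.1 directly after recording the singleton-barrier observation above.
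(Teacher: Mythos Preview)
The paper does not prove this lemma at all; it is stated with a citation to Lov\'asz and Plummer and used as a black box. Since your final plan is precisely to ``invoke their Lemma~5.2.1 directly,'' your proposal matches the paper's treatment exactly.

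That said, the proof sketch you offer before falling back on the citation has a circularity you should be aware of. In the inductive step you justify ``$v$ is matched by $M$ to a vertex outside $B_1 \cup B_2$'' by the parenthetical ``because $v$ lies in \ldots\ the barrier $B_1 \cup B_2$ in formation'' --- but that $B_1 \cup B_2$ is a barrier is exactly what the induction is meant to establish. From the hypotheses alone you know $v \in B_2 \setminus B_1$ is $M$-matched outside $B_2$, but nothing yet prevents its mate from lying in $B_1 \setminus B_2$, and without that your parity bookkeeping on the components of $G - (B_1 \cup \{v_1,\dots,v_k\})$ does not go through. The standard route in Lov\'asz--Plummer avoids this vertex-by-vertex induction: one either shows directly that the relation ``$u \sim v$ iff $u=v$ or $G - u - v$ has no perfect matching'' is an equivalence relation on an elementary graph whose classes are the maximal barriers, or one uses a submodular-type inequality for the surplus function $|S| - \odd(G-S)$ to conclude that intersecting barriers have a barrier union in one stroke. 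Either of these would repair the sketch; as written, the induction needs more than you have supplied.
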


\begin{lemma}[Theorem 5.1.6~\cite{LovaszPlummer}]
	\label{lma:allodd}
	For an elementary graph $G$ and $B \in \cB(G)$, $B \neq \emptyset$,
		all components of $G-B$ have odd order.
\end{lemma}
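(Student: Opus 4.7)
The plan is to argue by contradiction: suppose $B \in \cB(G)$ is non-empty but some component $C$ of $G-B$ has even order. I will first show, purely from the barrier condition and matchability, that no edge between $C$ and $B$ can lie in any perfect matching of $G$, and then use the hypothesis that $G$ is elementary to produce a forbidden extendable edge between $C$ and $B$.

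For the first step, fix an arbitrary perfect matching $M$ of $G$ and count. For each odd component $O$ of $G-B$, $|O|$ is odd, so $O$ cannot be entirely matched internally by $M$; hence $M$ contains at least one edge from $O$ to $V(G)\setminus V(O)$, which (since $O$ is a component of $G-B$) must land in $B$. Summing over the $\odd(G-B)=|B|$ odd components and using that $M$ is a matching, each vertex of $B$ is matched by $M$ into a distinct odd component of $G-B$, with no $B$-vertex left to match into an even component. In particular, $M$ uses no edge from $B$ to $C$. Since $M$ was arbitrary, every edge of $G$ with one endpoint in $B$ and the other in $C$ is free.

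For the second step, I invoke elementarity. The extendable subgraph of $G$ is connected and spans $V(G)$, so for any $v \in C$ and any $b \in B$ there is a walk $v=w_0,w_1,\dots,w_k=b$ each of whose edges is extendable. Let $i$ be the least index with $w_i\notin C$; then $w_{i-1}\in C$, and since $C$ is a connected component of $G-B$, the only way an edge of $G$ can leave $C$ is by going to $B$, so $w_i\in B$. Thus $w_{i-1}w_i$ is an extendable edge joining $C$ to $B$, contradicting the conclusion of the previous paragraph. Hence no even component $C$ can exist, proving every component of $G-B$ has odd order.

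The only delicate step is the matching count in the first paragraph, where one must be careful that the $|B|$ edges leaving the $|B|$ distinct odd components use $|B|$ distinct $B$-vertices (immediate from $M$ being a matching) and therefore saturate $B$ entirely into the odd components; once that is established, the contradiction via an extendable path is routine and uses only the definitions of \emph{elementary} and \emph{extendable}.
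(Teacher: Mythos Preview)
Your argument is correct. The paper does not actually prove this lemma; it is quoted without proof as Theorem~5.1.6 of Lov\'asz and Plummer's \emph{Matching Theory}, so there is no in-paper argument to compare against. Your proof is the standard one: the counting step showing that in any perfect matching every vertex of $B$ is forced to match into a distinct odd component of $G-B$ is exactly the usual consequence of the barrier equality $\odd(G-B)=|B|$, and the concluding step---finding an extendable edge leaving the putative even component via connectivity of the extendable subgraph---is the natural way to invoke elementarity. One small cosmetic remark: in the second step you only need the existence of \emph{some} extendable path from $C$ to $V(G)\setminus C$ (guaranteed since $B\neq\emptyset$ and the extendable subgraph is connected on all of $V(G)$), not a path between a specific pair $(v,b)$; but what you wrote is of course also valid.
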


	Given an elementary graph $H$, let $\cE(H)$ be the set 
		of elementary supergraphs 
		with the same extendable subgraph:
		$\cE(H) = \left\{ G \supseteq H : V(G) = V(H), \Phi(G) = \Phi(H)\right\}$.
	We will refer to maximal elements of $\cE(H)$ using
		the subgraph relation $\subseteq$, giving a poset $(\cE(H),\subseteq)$.
	Note that $(\cE(H),\subseteq)$ has a unique
		minimal element, $H$.

\begin{proposition}\label{prop:barrierpartition}
	Let $H$ be a 1-extendable graph.
	If $G$ is a maximal element in $\cE(H)$,
		then every barrier in $\cP(G)$ is a clique of free edges in $G$.
\end{proposition}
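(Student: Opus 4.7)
The plan is to reduce the statement directly to Proposition \ref{prop:freeextendable}, since that result already says that in any elementary graph where adding any missing edge increases $\Phi$, every barrier of size at least two is a clique of free edges. So the work is to verify both hypotheses: that $G$ is elementary, and that $G$ is edge-maximal in the sense of that proposition.

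First I would check that $G$ is elementary. Since $H \subseteq G$ and $\Phi(G)=\Phi(H)$, any edge $e \in E(G)\setminus E(H)$ cannot lie in any perfect matching of $G$ (otherwise a perfect matching of $G$ using $e$ would not be a perfect matching of $H$, but every perfect matching of $H$ is still a perfect matching of $G$, which would force $\Phi(G)>\Phi(H)$). Hence every new edge is free in $G$, and every edge of $H$ remains extendable in $G$. So the extendable subgraph of $G$ is exactly $H$. Because $H$ is 1-extendable it is connected, so $G$ is elementary and $\cP(G)$ partitions $V(G)$ by Lemma \ref{lma:barrierpartition}.

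Next I would use maximality to verify the hypothesis of Proposition \ref{prop:freeextendable}. Let $e$ be any missing edge of $G$. Adding $e$ cannot decrease $\Phi$, so $\Phi(G+e) \geq \Phi(G)$. If equality held, then $G+e$ would lie in $\cE(H)$ and strictly contain $G$, contradicting maximality. Hence $\Phi(G+e) > \Phi(G)$, which is exactly the hypothesis we need.

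Applying Proposition \ref{prop:freeextendable} then gives that every barrier of $G$ of size at least two is a clique of free edges; in particular this holds for every maximal barrier in $\cP(G)$ of size at least two. The remaining members of $\cP(G)$ are singletons, which are trivially cliques of free edges (they contain no edges at all). I do not expect a real obstacle here; the only subtle point is correctly identifying that $H$ is the extendable subgraph of $G$, which is what lets us conclude elementarity and hence apply the machinery from \cite{LovaszPlummer}.
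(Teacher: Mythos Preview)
Your proof is correct and follows essentially the same idea as the paper's. The paper argues directly that a missing edge inside a maximal barrier would be free in $G+e$ (via the barrier/odd-component count), contradicting maximality, and then invokes Proposition~\ref{prop:freeextendable} for the freeness of the existing edges; you instead verify the edge-maximality hypothesis once and apply Proposition~\ref{prop:freeextendable} wholesale. Your extra care in checking that the extendable subgraph of $G$ is exactly $H$ (hence $G$ is elementary) is a point the paper leaves implicit.
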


\begin{proof}
	If some maximal barrier $X$ in $\cP(G)$ is not a clique, 
			then there is a missing edge $e$ between
			vertices $x,y$ of $X$.
	Since $|X| = \odd(G-X)$, 
		all perfect matchings of $G+e$ 
		must match at least one vertex of 
		each odd component to some vertex in $X$,
		saturating $X$.
	This means that $e$ is not extendable in $G+e$, and $G+e \in \cE(H)$.
	This contradicts that $G$ was maximal in $\cE(H)$.
	
	By Proposition \ref{prop:freeextendable}, 
		the edges within the barriers are free.
\end{proof}

\begin{lemma}
	\label{lma:barrierrefinement}
	Let $H$ be a 1-extendable graph and $G \in \cE(H)$
		be an elementary supergraph of $H$.
	Every barrier $B$ of $G$ is also a barrier of $H$.
\end{lemma}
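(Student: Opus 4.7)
The plan is to prove that $B$ is a barrier of $H$ by showing both $\odd(H-B) \leq |B|$ and $\odd(H-B) \geq |B|$, then invoking the definition.

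First I would observe that $H$ is matchable. Since $H \subseteq G$ and $V(H) = V(G)$, every perfect matching of $H$ is a perfect matching of $G$, so $\Phi(H) \leq \Phi(G)$; combined with $G \in \cE(H)$ we have $\Phi(H) = \Phi(G) \geq 1$. Applying Tutte's theorem to $H$ then gives $\odd(H-B) \leq |B|$, the easy half of the argument.

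For the reverse inequality, the key observation is that $H - B$ is a spanning subgraph of $G - B$ on the common vertex set $V(G)\setminus B$, so each connected component of $G-B$ decomposes as a disjoint union of one or more components of $H-B$. I would then fix an odd component $C$ of $G-B$: the vertices of $V(C)$ are partitioned into components of $H-B$, and since $|V(C)|$ is odd, at least one part in this partition has odd size, contributing an odd component of $H-B$ inside $V(C)$. As the odd components of $G-B$ are pairwise disjoint in $V(G)\setminus B$, the corresponding odd components of $H-B$ are also pairwise disjoint, which yields
\[
    \odd(H-B) \;\geq\; \odd(G-B) \;=\; |B|.
\]
Combining the two inequalities gives $\odd(H-B) = |B|$, so $B$ is a barrier of $H$.

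I do not expect a serious obstacle here; the proof is essentially a parity argument once one notices that components of $G-B$ refine into components of $H-B$ rather than the other way around. The only subtle point worth stating carefully in the write-up is that the odd sub-parts selected from different odd components of $G-B$ really are distinct components of $H-B$, which follows immediately from disjointness of the original components in $G-B$.
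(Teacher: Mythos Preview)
Your proof is correct and follows essentially the same approach as the paper: both use Tutte's theorem on the matchable graph $H$ for the inequality $\odd(H-B)\leq |B|$, and both obtain $\odd(H-B)\geq \odd(G-B)$ by observing that each component of $G-B$ is a union of components of $H-B$ and applying a parity argument. The paper phrases the parity step slightly more strongly (an odd component of $G-B$ contains an \emph{odd number} of odd components of $H-B$), but your ``at least one'' version is already enough for the needed inequality.
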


\begin{proof}
	Each odd component of $G-B$ 
		is a combination of components of $H-B$,
		an odd number of which are odd components,
		giving $\odd(H-B) \geq \odd(G-B)$.
	There are no new vertices in $G$, so 
		the components of $G-B$ partition $V(H)-B$
		so that the partition of components of $H-B$ is a refinement of $G-B$.
		
	Since $B$ is a barrier of $G$, $\odd(G-B) = |B|$.
	Since $H$ is matchable, Tutte's Theorem implies
		$\odd(H-B) \leq |B|$.
	Thus $|B| = \odd(G-B) \leq \odd(H-B) \leq |B|$
	and equality holds, making $B$ a barrier of $H$.
\end{proof}

\def\cC{\mathcal{C}}
\def\cI{\mathcal{I}}

	Given a 1-extendable graph $H$, 
		 barriers $B_1$ and $B_2$ \emph{conflict} if
		(a) $B_1 \cap B_2 \neq \emptyset$,
		(b) $B_1$ spans multiple components of $H - B_2$, or
		(c) $B_2$ spans multiple components of $H - B_1$.
A set $\cI$ of barriers in $\cB(H)$
		is a \emph{cover set} if each pair $B_1, B_2$ of barriers in $\cI$
		are non-conflicting and $\cI$ is a partition of $V(H)$.
	Let $\cC(H)$ be the family of cover sets in $\cB(H)$.
	If $\cI_1, \cI_2 \in \cC(H)$ are cover sets,
		let the relation $\cI_1 \preceq \cI_2$
		hold
		if for each set $B_1 \in \cI_1$ there exists a set $B_2 \in \cI_2$
		so that $B_1 \subseteq B_2$.
	This defines a partial order on $\cC(H)$ and the poset $(\cC(H), \preceq)$
		has a unique minimal element given by the partition of $V(H)$ 
		into singletons.

\begin{theorem}
	\label{thm:barrierconflictgraph}
	Let $H$ be a 1-extendable graph.
	A graph $G \in \cE(H)$ is maximal in $(\cE(H),\subseteq)$
		if and only if
		each $B \in \cP(G)$ is a clique,
		$\cP(G)$ is a cover set, and
		$\cP(G)$ is maximal in $(\cC(H),\preceq)$.
\end{theorem}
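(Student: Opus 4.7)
The plan is to prove each direction of the biconditional separately, with the essential observation that maximal elements of $(\cE(H),\subseteq)$ correspond bijectively to maximal cover sets in $(\cC(H),\preceq)$ under the map $\cI \mapsto H \cup \bigcup_{B \in \cI} K_B$ (complete graph on each $B$), inverted by $G \mapsto \cP(G)$. Throughout, the proof relies on the Tutte-theorem consequence that, in an elementary graph $K$, two vertices $u,v$ share a barrier if and only if $K - \{u,v\}$ has no perfect matching; in particular they then share an element of $\cP(K)$.

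For the forward direction, assume $G$ is maximal in $(\cE(H),\subseteq)$. Proposition \ref{prop:barrierpartition} gives that each $B \in \cP(G)$ is a clique, Lemma \ref{lma:barrierpartition} that $\cP(G)$ partitions $V(G) = V(H)$, and Lemma \ref{lma:barrierrefinement} that each $B \in \cP(G)$ is a barrier of $H$. To establish non-conflict between distinct $B_1, B_2 \in \cP(G)$, suppose $B_1$ spans multiple components of $H - B_2$. By Lemma \ref{lma:allodd} every component of $H - B_2$ is odd, and the clique on $B_1$ in $G$ merges the components of $H - B_2$ hit by $B_1$ into a single component of $G - B_2$; a short parity count, exploiting that all merged pieces are odd, forces $\odd(G - B_2) < |B_2|$, contradicting that $B_2$ is a barrier of $G$. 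To show $\cP(G)$ is maximal in $(\cC(H),\preceq)$, assume some $\cI \succ \cP(G)$ exists and form $G_\cI$ by adjoining to $H$ every edge inside each $B \in \cI$. Each added edge sits within a barrier of $H$, so by the Tutte characterization no perfect matching of the growing graph ever uses it; in parallel, non-conflict within $\cI$ prevents added edges in one $B \in \cI$ from merging components across another, so every $B \in \cI$ remains a barrier throughout the construction. Thus $\Phi(G_\cI) = \Phi(H)$ and $G_\cI$ has the same extendable subgraph as $H$, so $G_\cI \in \cE(H)$ properly contains $G$, contradicting maximality.

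For the reverse direction, assume the three conditions on $G$ and suppose for contradiction that $G \subsetneq G' \in \cE(H)$. Reducing to a single new edge, write $G' = G + uv$; the equality $\Phi(G') = \Phi(H) = \Phi(G)$ forces $uv$ to be free in $G'$, so Proposition \ref{prop:freeextendable} places $u$ and $v$ in a common barrier of $G'$. By the Tutte characterization, $G' - \{u,v\}$ has no perfect matching, and since $G - \{u,v\} \subseteq G' - \{u,v\}$ neither does $G - \{u,v\}$. Hence $u,v$ lie in a common barrier of $G$ and therefore in a common element $B \in \cP(G)$; but $B$ is a clique by hypothesis, so $uv \in E(G)$, a contradiction.

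The main obstacle is the forward-direction parity argument controlling how clique edges on $B_1$ merge components of $H - B_2$; this step leans essentially on Lemma \ref{lma:allodd} to guarantee every such component is odd, so that any merging of two or more components strictly reduces $\odd(G - B_2)$ below $|B_2|$. Once this bookkeeping is in place and the Tutte-based barrier characterization is invoked, both directions reduce to verifying that cliques can be added across a cover set of $H$ without creating new perfect matchings or destroying existing barriers.
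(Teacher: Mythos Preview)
Your proof is correct. It differs from the paper's in organization: the paper first establishes an explicit bijection (Claim \ref{clm:coverbijection}) between cover sets $\cI \in \cC(H)$ and those $G \in \cE(H)$ whose free subgraph is a disjoint union of barrier-cliques, and then proves a three-way equivalence among maximality of $\cI$, maximality of $H_\cI$, and $\cP(H_\cI) = \cI$. You instead verify each direction of the biconditional directly. Your reverse direction is in fact cleaner than the paper's: by invoking the Tutte characterization that $u,v$ share a barrier of $G$ if and only if $G-\{u,v\}$ has no perfect matching, and observing $(G+uv)-\{u,v\}=G-\{u,v\}$, you show that the clique condition on $\cP(G)$ alone already forces maximality of $G$, never touching the cover-set or $\preceq$-maximality hypotheses (so those two conditions are consequences of the first, a point the paper obtains only implicitly via its (iii)$\Rightarrow$(ii) step). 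Your forward direction tracks the paper closely---the parity/merging argument for non-conflict is the same idea (merging odd components of $H-B_2$ via the clique on $B_1$ strictly drops the odd-component count), and your construction of $G_\cI$ from a strictly coarser $\cI$ is the contrapositive of the paper's (i)$\Rightarrow$(ii). The bijection framework pays off for the paper downstream, since it is reused verbatim in Lemma \ref{lma:bipartiteextremal} and Lemma \ref{lma:pruning}; your route is more self-contained for this theorem alone.
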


\begin{proof}
	We define a bijection between
		$\cC(H)$
		and a subset of elementary supergraphs in $\cE(H)$,
		as given in the following claim.
		
	\begin{claim}
		\label{clm:coverbijection}
		Cover sets $\cI \in \cC(H)$
			are in bijective correspondence with 
			elementary graphs $G \in \cE(H)$
			where the free subgraph of $G$
			is a disjoint union of cliques,
			each of which is a (not necessarily maximal) barrier of $G$.
	\end{claim}
	
	Let $G$ be a graph in $\cE(H)$ 
		where the free subgraph of $G$
		is a disjoint union of cliques $X_1$, $X_2$, $\dots$, $X_k$,
		where each $X_i$ is a barrier of $G$.
	Then, let $\cI = \{ X_1, \dots, X_k\}$ be the set of barriers.
	Note that $\cI$ is a partition of $V(H)$,
		each part of which is a barrier of $G$
		which is a barrier of $H$ by Lemma \ref{lma:barrierrefinement}.
	Consider two barriers $B_1, B_2 \in \cI$.
	Since we selected $\cI$ to be a partition, $B_1 \cap B_2 = \emptyset$.
	If $B_2$ spans multiple components of $H-B_1$, 
		then the vertices from these components are a single
		component in $G-B_1$, where $B_2$ is a clique of edges.
	However, Lemma \ref{lma:allodd} gives that all components of
		$H-B_1$ and $G-B_1$ are odd, since $B_1$ is a barrier.
	This implies that $|B_1| = \odd(H-B_1) > \odd(G-B_1) = |B_1|$, 
		a contradiction. 
	Hence, $B_2$ is contained within a single component of $H-B_1$,
		so $B_1$ and $B_2$ do not conflict in $H$.
	This gives that $\cI$ is a cover set in $\cC(H)$.
		
	This map from $G \in \cE(H)$ to $\cI \in \cC(H)$
		is invertible by taking 
		a cover set $\cI \in \cC(H)$
		and filling each barrier $B \in \cI$ with edges, 
		forming a graph	$H_{\cI}$.
	Since each pair of barriers $B_1, B_2$ in $\cI$ are non-conflicting,
		the components of $H-B_1$ do not change by 
		adding edges between vertices in $B_2$.
	Therefore, each set $B \in \cI$ is also a barrier in $H_{\cI}$.
	By Proposition \ref{prop:freeextendable}, 
		the edges within each barrier of $H_{\cI}$ are free,
		so all extendable edges of $H_{\cI}$ are exactly those in $H$.
	This gives that $\Phi(H_{\cI}) = \Phi(H)$ and $H_{\cI} \in \cE(H)$.
	The map from $\cI$ to $H_{\cI}$ is the inverse of the 
		earlier map from $G \in \cE(H)$ with free edges forming disjoint cliques
		to $\cI \in \cC(H)$.
	Hence, this is a bijection, proving the claim.
		
	An important point in the previous claim is that the free edges
		formed cliques which are barriers, but those 
		cliques were not necessarily \emph{maximal} barriers.	
	We now show that the above bijection maps edge-maximal graphs in 
		$\cE(H)$ to maximal cover sets in $\cC(H)$.
	
	\begin{claim}
		Let $\cI$ be a cover set in $\cC(H)$.
		The following are equivalent:
		\begin{cem}
			\item[(i)] $\cI$ is maximal in $(\cC(H),\preceq)$.
			\item[(ii)] $H_{\cI}$ is maximal in $(\cE(H),\subseteq)$.
			\item[(iii)] $\cP(H_\cI) = \cI$.
		\end{cem}	
	\end{claim}
	
	(ii) $\Rightarrow$ (iii) This is immediate from Proposition \ref{prop:barrierpartition}.
	
	(iii) $\Rightarrow$ (ii) If $\cP(H_{\cI})$, then
		any edge $e \notin E(H_{\cI})$ must span
		two maximal barriers.
		By Proposition \ref{prop:freeextendable},
		$e$ is allowable in $H_{\cI}+e$, 
			so $H_{\cI}$ is maximal in $(\cE(H),\subseteq)$.
	
	(i) $\Rightarrow$ (ii) Let $\cI$ be a maximal cover set of
		barriers in $\cB(H)$ and $H_{\cI}$ the corresponding elementary supergraph
		in $\cE(H)$.
	Suppose there exists a supergraph $H'\supset H_{\cI}$ in $\cE(H)$.
	Then, there is an edge $e$ in $E(H') \setminus E(H_{\cI})$ so that 
		$e$ is free in $H_{\cI} + e$.
	This implies that $e$ spans vertices within the same barrier $B$ of $H_{\cI}+e$ 
		(by Proposition \ref{prop:freeextendable}),
		and $B$ is also a barrier of $H_{\cI}$.
	However, $B$ is split into $k$ barriers $B_1, \dots, B_k$ in $\cI$, 
		for some $k \geq 2$.
	Therefore, the set $\cI' = (\cI \setminus \{B_1,\dots, B_k\})\cup \{B\}$
		is a refinement of $\cI$.
		
	We now show that $\cI'$ is a cover set in $\cC(H)$.
	Note that any two barriers $X_1, X_2 \in \cI'$
		where neither is equal to $B$ is still non-conflicting.
	For any barrier $X \neq B$ in $\cI'$, 
		notice that $X$ does not span more than one component
		of $H - B$, since $B$ is a barrier in $H_{\cI}$ and $H_{\cI'}$.
	Also, if $B$ spanned multiple components of $H-X$, then
		those components would be combined in $H_{\cI'}-X$, 
		but since $X$ is a barrier, $|X| = \odd(H_{\cI'}-X) \leq \odd(H-X) = |X|$.
	Therefore, $B$ does not conflict with any other barrier $X$ in $\cI'$
		giving $\cI'$ is a cover set and $\cI \preceq \cI'$.
	This contradicts maximality of $\cI$, so $H_{\cI}$ is maximal.

	(ii) $\Rightarrow$ (i)	
	By (iii), $\cI = \cP(H_{\cI})$.
	Let $\cI'$ be a cover set so that 
		$\cI \preceq \cI'$.
	$\cI'$ also partitions $V(H)$, so
		$\cP(H_{\cI})$ is a refinement of $\cI'$.
	Then, the graph $H_{\cI'}$ is a proper supergraph of $G$.
	By the maximality of $G$, $H_{\cI'}$ must not be an elementary supergraph
		in $\cE(H)$.
	By the bijection of Claim \ref{clm:coverbijection}, 
		$\cI'$ must not be a cover set
		of $H$.
	Therefore, $\cI$ is a maximal covering set in $\cC(H)$.

	This proves the claim and the theorem follows.
\end{proof}

The previous theorem provides a method to search for the maximum elements 
	of $\cE(H)$ by generating all cover sets $\{B_1,\dots,B_k\}$ in $\cC(H)$
	and 
	maximizing the sum $\sum_{i=1}^k {|B_i| \choose 2}$.
	
The {\naive} independent set generation algorithm 
	runs with an exponential blowup on the number of barriers.
This can be remedied in two ways.
First, we notice empirically that the number of barriers frequently drops
	as more edges and ears are added,
	especially for dense extendable graphs.
Second, the number of barriers is largest when
	the graph is bipartite, as there are exactly two maximal barriers
	each containing half of the vertices,
	with many subsets which are possibly barriers.
We directly adress the case when $H$ is bipartite
	as there are exactly two maximum elements of $\cE(H)$.

\begin{lemma}[Corollary 5.2~\cite{HSWY}]
	\label{lma:maxfree}
	The maximum number of free edges in an elementary graph
		with $n$ vertices is ${n/2\choose 2}$.
\end{lemma}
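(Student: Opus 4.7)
The plan is, given an elementary graph $G$ on $n$ vertices, to fix any perfect matching $M$ of $G$ (which has $n/2$ edges) and to injectively assign each free edge to an unordered pair of $M$-edges, with at most one free edge per pair. This yields $|F| \leq \binom{n/2}{2}$. Tightness is witnessed by $G = K_{n/2} \vee \overline{K_{n/2}}$: the clique side is a barrier of size $n/2$ that is a clique of $\binom{n/2}{2}$ free edges by Proposition \ref{prop:freeextendable}.

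First I record the key matching fact. If $uv$ is a free edge, then by Proposition \ref{prop:freeextendable} there is a barrier $X$ containing both $u$ and $v$, and by Lemma \ref{lma:allodd} every component of $G - X$ has odd order. Since $|X| = \odd(G-X)$, each of the $|X|$ odd components must absorb exactly one vertex of $X$ under $M$, exhausting $X$; consequently every vertex of $X$ (in particular $u$ and $v$) is matched by $M$ to a vertex outside $X$. Thus $u$ and $v$ lie in two distinct $M$-edges $uM(u)$ and $vM(v)$, and $\{M(u), M(v)\} \subseteq V(G)\setminus X$.

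Next I check that between any two distinct $M$-edges $uM(u)$ and $vM(v)$, at most one of the four cross-pairs $\{u,v\}$, $\{u,M(v)\}$, $\{M(u),v\}$, $\{M(u),M(v)\}$ is a free edge of $G$. Assuming $\{u,v\}$ is free in some barrier $X$, the previous paragraph gives $M(u), M(v) \notin X$, so the pairs $\{u, M(v)\}$ and $\{M(u),v\}$ each have one endpoint in $X$ and one outside, hence lie in no single barrier and by Proposition \ref{prop:freeextendable} cannot be free. The remaining pair $\{M(u), M(v)\}$ is not even an edge of $G$: if it were, the $4$-cycle $u,M(u),M(v),v$ would sit entirely in $G$ and $M' = (M \setminus \{uM(u), vM(v)\}) \cup \{uv, M(u)M(v)\}$ would be a perfect matching of $G$ containing $uv$, contradicting that $uv$ is free. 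Thus each unordered pair of $M$-edges gives rise to at most one free edge, and summing over the $\binom{n/2}{2}$ such pairs proves the upper bound. The one delicate step is this $4$-cycle exchange that rules out $M(u)M(v) \in E(G)$; everything else is direct bookkeeping over barriers.
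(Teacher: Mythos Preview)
Your argument is essentially correct and the injection into pairs of $M$-edges is a clean approach (the paper itself does not prove this lemma; it is quoted from \cite{HSWY}). There is, however, one step that is not justified as written.

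You rule out $\{u,M(v)\}$ (and symmetrically $\{M(u),v\}$) by saying that this pair ``has one endpoint in $X$ and one outside, hence lies in no single barrier.'' That implication is not automatic: knowing $u\in X$ and $M(v)\notin X$ for \emph{some} barrier $X$ does not by itself preclude the existence of a different barrier $Y$ with $u,M(v)\in Y$. What you need here is the canonical-partition fact that in an elementary graph every barrier is contained in a single maximal barrier, so that ``being in a common barrier'' is the same as ``being in the same maximal barrier.'' Concretely, take $X$ from the start to be the maximal barrier containing $u$ (it exists and is unique by Lemma~\ref{lma:barrierpartition}); Proposition~\ref{prop:freeextendable} together with this fact then forces $v\in X$, your matching argument gives $M(v)\notin X$, and now if $\{u,M(v)\}$ were free the same reasoning would force $M(v)$ into the maximal barrier of $u$, namely $X$, a contradiction. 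Alternatively, argue by transitivity: if both $uv$ and $uM(v)$ were free, then $v$ and $M(v)$ lie in the same maximal barrier, contradicting that every vertex of a barrier is $M$-matched outside it.

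With that patch in place, the three cases ($\{u,M(v)\}$, $\{M(u),v\}$ via barriers; $\{M(u),M(v)\}$ via the alternating $4$-cycle swap) do exhaust the possibilities, the injection is valid, and the bound and tightness example are correct.
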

	
Not only is this a general bound, but
	it is attainable for bipartite graphs.
In a bipartite graph $H$, there are exactly two 
	graphs in $\cE(H)$ which attain this number of free edges.

\begin{lemma}
	\label{lma:bipartiteextremal}
	If $H$ is a bipartite 1-extendable graph,
		then there are exactly two maximal barriers, $X_1$ and $X_2$.
	Also, there
		are exactly two maximum elements $G_1, G_2$ of $\cE(H)$.
	Each graph $G_i$ is given by adding all possible edges within $X_i$.
\end{lemma}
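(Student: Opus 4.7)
My plan is to first identify the two maximal barriers of $H$, then use the clique-of-free-edges characterization to pin down the two maximum elements of $\cE(H)$.

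For the first assertion, I would show directly that $A$ and $B$ are barriers: since $H$ is bipartite, $H - A$ is edgeless on $B$, so $\odd(H - A) = |B| = |A|$, and the analogous computation handles $B$. Maximality is immediate, since any proper superset $A \cup S$ with $\emptyset \neq S \subseteq B$ satisfies $\odd(H - (A \cup S)) = |B - S| = |A| - |S|$, which cannot equal $|A \cup S| = |A| + |S|$, so no such superset is a barrier. Because the maximal barriers partition $V(H) = A \cup B$ by Lemma \ref{lma:barrierpartition}, it follows that $\cP(H) = \{A, B\}$ exactly.

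Next I would define $G_1$ to be $H$ together with all $\binom{|A|}{2}$ missing edges inside $A$, and $G_2$ symmetrically for $B$. To check $G_1 \in \cE(H)$, I'd verify that each new edge $aa'$ is free: a perfect matching of $G_1$ using $aa'$ would restrict to a perfect matching of $G_1 - \{a, a'\}$, which has $|A| - 2$ vertices from $A$ and $|B| = |A|$ from $B$. Since $G_1$ contains no within-$B$ edges, an elementary count on the number of internal versus crossing edges in a hypothetical matching shows that at least two $B$-vertices must go unmatched, contradicting the PM condition. Hence $\Phi(G_1) = \Phi(H)$ and $G_1$ has exactly $\binom{n/2}{2}$ free edges, meeting the upper bound of Lemma \ref{lma:maxfree}; the argument for $G_2$ is symmetric.

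To establish uniqueness, suppose $G \in \cE(H)$ also has $\binom{n/2}{2}$ free edges. Then $G$ must be maximal in $(\cE(H), \subseteq)$, for any additional edge would either be free (pushing the count past the maximum) or create a new perfect matching (violating $\cE(H)$-membership). Proposition \ref{prop:freeextendable} then forces every $P \in \cP(G)$ with $|P| \geq 2$ to be a clique of free edges, so the free-edge count equals $\sum_{P \in \cP(G)} \binom{|P|}{2}$. By Lemma \ref{lma:barrierrefinement} each such $P$ is also a barrier of $H$. Using the classical fact that barriers in bipartite elementary graphs lie entirely within one side of the bipartition, together with the general bound $|P| \leq n/2$, a convexity argument will show that reaching $\binom{n/2}{2}$ forces some part $P$ to have $|P| = n/2$. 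For such $P$, the condition $\odd(G - P) = n/2$ forces $V(G) - P$ to consist of $n/2$ isolated vertices in $G$, so the remaining parts of $\cP(G)$ must be singletons. Then $P$ is a maximal barrier of $H$, hence $P \in \{A, B\}$ by the first part, giving $G \in \{G_1, G_2\}$.

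The main obstacle is the convexity-with-constraints step at the end: combining the one-sidedness of barriers in bipartite elementary graphs with the non-conflict requirements from Theorem \ref{thm:barrierconflictgraph} to rule out ``hybrid'' cover-set configurations with several parts of intermediate size on each side that might together sum to $\binom{n/2}{2}$. Since Lemma \ref{lma:maxfree} already bounds the total, the task is essentially to show equality holds uniquely at the ``one full side'' configuration rather than at a nontrivial mixed partition, and this is where the bipartite structure does the decisive work.
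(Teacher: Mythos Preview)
Your outline for the first two assertions---identifying $\cP(H)=\{X_1,X_2\}$ and verifying $G_1,G_2\in\cE(H)$ with $\binom{n/2}{2}$ free edges---is correct and essentially matches the paper's argument (the paper phrases the second step through the cover sets $\cI_1,\cI_2$ and the bijection of Theorem~\ref{thm:barrierconflictgraph}, but the content is the same).

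The gap is exactly where you flag it: the ``convexity-with-constraints'' step. Pure convexity with only the one-sidedness constraint does \emph{not} force a part of size $n/2$. Indeed, the parts inside $X_1$ partition a set of size $m=n/2$, so $\sum_{P\subseteq X_1}\binom{|P|}{2}\le\binom{m}{2}$, and likewise for $X_2$; together this only gives $\sum_P\binom{|P|}{2}\le 2\binom{m}{2}$, and equality at $\binom{m}{2}$ is compatible with many ``hybrid'' partitions (e.g.\ both sides contributing roughly $\tfrac12\binom{m}{2}$). So one cannot read off $|P|=m$ from the value $\binom{m}{2}$ without invoking the non-conflict conditions in a quantitative way, and you have not said how you would do that.

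The paper's proof supplies precisely this missing piece. It fixes the largest barrier $A$ in the cover set, say $A\subseteq X_1$ with $|A|=k<m$, and observes two things. First, within $X_1$ at least $k(m-k)$ potential free edges (those between $A$ and $X_1\setminus A$) are already lost. Second, any barrier $B\subseteq X_2$ in the cover set must lie in a single component of $H-A$ (non-conflict), and since $H-A$ has $k$ odd components each meeting $X_2$, this yields $|B|\le m-k+1$; also $|B|\le k$ by maximality of $A$. With these explicit upper bounds on the part sizes in $X_2$, a short case analysis (splitting on whether $k\ge m-k+1$ and on whether $k\ge m/k$) bounds $\sum_{B\subseteq X_2}\binom{|B|}{2}<k(m-k)$, so the gain on the $X_2$ side cannot compensate the loss on the $X_1$ side. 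This is the concrete mechanism you are missing; your proposal names the right ingredients but does not extract the inequality $|B|\le\min\{k,\,m-k+1\}$ or carry out the comparison.
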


\begin{proof}
	Let $X_1$ and $X_2$ be the two sides of the bipartition of $H$.
	Since $H$ is matchable, $|X_1| = |X_2|$ and $V(H - X_1) = X_2$ and $V(H-X_2) = X_1$.
	Thus $X_1$ and $X_2$ are both barriers which partition $V(H)$
		and by Lemma \ref{lma:barrierpartition}
		these must be the maximal barriers of $H$.
	
	The sets $\cI_1 = \{X_1\} \cup \{\{v\} : v \in X_2\}$ 
		and $\cI_2 = \{X_2\} \cup \{\{v\} : v \in X_1\}$
		are maximal
		cover sets in $\cC(H)$.
	Using the bijection of Theorem \ref{thm:barrierconflictgraph},
		$\cI_1$ corresponds with the maximal elementary graph $G_1$ in $\cE(H)$
		where all possible edges are added to $X_1$.
	Similarly, $\cI_2$ corresponds to adding all possible edges to $X_2$,
		producing $G_2$.
	Each of these graphs has ${n(H)/2\choose 2}$ free edges,
		the maximum possible for graphs in $\cE(H)$
		by Lemma \ref{lma:maxfree}.
		
	We must show that any other graph $G$ in $\cE(H)$ 
		has fewer free edges.
	We again use the bijection of Theorem \ref{thm:barrierconflictgraph}
		in order to obtain a maximal cover set $\cI$ 
		in $\cB(H)$ which are filled with free edges in $G$.
	Then, the number of free edges in $G$ 
		is given by $s(\cI) = \sum_{B \in \cI} {|B|\choose 2}$.
	
	Without loss of generality,
		the barrier $A$ of largest size within $\cI$
		is a subset of $X_1$.
	For convenience, we use $m = n(H)/2$ to be the size of each 
		part $X_1, X_2$ and $k = |A|$, with $1 \leq k < m$.
	Note that in $H_{\cI}$, no free edges 
		have endpoints in both $A$ and $X_1 \setminus A$,
		leaving at least $k(m-k) = mk-k^2$ 
		fewer free edges within $X_1$ in $G$ than in $G_1$.
	If $H_{\cI}$ has ${n(H)/2\choose 2}$ edges, 
		then the barriers in $X_2$ add at least $mk-k^2$ free edges.
	
	The problem of maximizing $s(\cI)$ over all maximal cover sets
		can be relaxed to a linear program
		with quadratic optimization function as follows:
	First, fix the barriers of $\cI$ within $X_1$, including the largest barrier, $A$.
	Then, fix the number of barriers of $\cI$ within $X_2$ to be some integer $\ell$. 
	Then, let $\{B_1,\dots,B_\ell\}$ be the list of barriers 
		in $X_2$.
	Now, create variables $x_i = |B_i|$ for all $i \in \{1,\dots, \ell\}$.
	The barriers in $X_1$ are fixed, so to maximize $s(\cI)$,
		we must maximize $\sum_{i=1}^\ell {x_i \choose 2}$.
	
	We now set some constraints on the $x_i$.
	Since the barriers $B_i$ are not empty, we require $x_i \geq 1$.
	Since $B_i$ does not conflict with $A$, 
		each $B_i$ is within a single component of $H-A$.
	Since there are $|A|$ such components, there are at least $|A|-1$ other vertices
		in $X_2$ that are not in $B_i$, giving $x_i \leq m-k+1$.
	Also, since $A$ is the largest barrier, $x_i \leq k$.
	Finally, the barriers $B_i$ partition $X_2$, giving $\sum_{i=1}^\ell x_i = m$
		and that there are is at least one barrier per component, giving
		$\ell \geq k$.
		
	Since for $x < y$, ${x-1\choose 2} + {y+1\choose 2} > {x\choose 2} + {y\choose 2}$,
		optimium solutions to this linear program have maximum value 
		when the maximum number of variables have maximum feasible value.
	Suppose $1 \leq x_i \leq t$ are the tightest bounds on the variables $x_1,\dots,x_\ell$.
	Then $\frac{m-\ell}{t-1}{t\choose 2}$ is an upper bound on the
		value of the system.
	
	\case{1} Suppose $k \geq m-k+1$.
		Now, the useful constraints are 
	$\sum_{i=1}^\ell x_i = m, 1 \leq x_i \leq m-k+1$
	and we are trying to maximize $\sum_{i=1}^\ell {x_i \choose 2}$.
	The optimum value
		is bounded by $\frac{m-\ell}{m-k}{m-k+1\choose 2}$.
	As a function of $\ell$, 
		this bound is maximized by the smallest feasible value of $\ell$,
		being $\ell = k$.
	Hence, we have an optimum value at most
		$\frac{m-k}{m-k}\frac{(m-k+1)(m-k)}{2} =  \frac{1}{2}m\left(m+1\right) - \left(m+\frac{1}{2}\right)k - \frac{1}{2}k^2$.
	Since $k \geq m-k+1$,
		the inequality $k \geq \frac{1}{2}(m+1)$ holds, and
	(ii) $\Rightarrow$ (iii) This is immediate from Proposition \ref{prop:barrierpartition}.
	
		the optimum value of this program is at most 
		
	\[   \frac{1}{2}m\left(m+\frac{1}{2}\right) - \left(m+\frac{1}{2}\right)k - \frac{1}{2}k^2
		 \leq 
		\underbrace{mk}_{k \geq \frac{1}{2}(m+1)} 
		- \underbrace{k^2}_{k \leq m} - \frac{1}{2}k^2 < mk-k^2.\]
	
	Therefore, $H_\cI$ must not have ${n(H)/2\choose 2}$ free edges.
	
	\case{2} Suppose $k < m-k+1$.
		The constraints are now
	$ \sum_{i=1}^\ell x_i = m, 1 \leq x_i \leq k$
	while maximizing $\sum_{i=1}^\ell {x_i \choose 2}$.
	This program has optimum value bounded above by
		$\frac{m-\ell}{k-1}{k\choose 2}$,
		which is maximized by the smallest feasible value of $\ell$.
	If $m/k > k$ and $\ell < m/k$, the program is not even feasible,
		as a sum of $\ell$ integers at most $k$ could not sum to $m$.
	Hence, $\ell \geq \max\{ k, m/k\}$.
		
	\subcase{2.a} Suppose $k \geq m/k$.
		Setting $\ell = k$ gives a bound of
		$\frac{m-k}{k-1}{k\choose 2} = \frac{1}{2}(mk-k^2)$.
		This is clearly below $mk-k^2$, so
			$H_{\cI}$ does not have ${n(H)/2\choose 2}$ free edges.
			
	\subcase{2.b} Suppose $k < m/k$.
		Setting $\ell = \lceil m/k\rceil $ gives a bound of
			$\frac{m-\lceil m/k\rceil}{k-1}{k\choose 2} = %
				\frac{1}{2}(mk-m)$.
		Since $k < m/k$, $k^2 < m$ and
			$\frac{1}{2}(mk - m) \leq mk-k^2$.
		Hence, $H_{\cI}$ does not have ${n(H)/2\choose 2}$ free edges.
\end{proof}
		
Experimentation over the graphs used during the
	generation algorithm for $p$-extremal graphs
	shows that
	a {\naive} generation of cover sets in $\cC(H)$
	is sufficiently fast to compute the maximum
	excess in $\cE(H)$
	when the list of barriers $\cB(H)$ is known.
The following section describes a method for computing $\cB(H)$
	very quickly using the canonical ear decomposition.

\section{The Evolution of Barriers}
\label{ssec:evolutionofbarriers}

In this section, 
	we describe a method to efficiently compute 
	the barrier list $\cB(H)$
	of a 1-extendable graph $H$
	utilizing a graded ear decomposition.
Consider a non-refinable graded ear decomposition 
	$H_0 \subset H_1 \subset H_2 \subset \cdots \subset H_k = H$
	of a 1-extendable graph $H$
	starting at a cycle $C_{2\ell} = H_0$.
Not only are the maximal barriers of $C_{2\ell}$ easy to compute (the 
	sets $X, Y$ forming the bipartition) 
	but also the barrier list (every non-empty subset of $X$ and $Y$ is a barrier).

\begin{lemma}\label{lma:pushdownbarrier}
	Let $H^{(i)} \subset H^{(i+1)}$ be a non-refinable ear decomposition of 
		a 1-extendable graph $H^{(i+1)}$ from a 
		1-extendable graph $H^{(i)}$
		 using one or two ears.
	If $B'$ is a barrier in $H^{(i+1)}$, 
		then $B = B' \cap V(H)$ is a barrier in $H^{(i)}$.
\end{lemma}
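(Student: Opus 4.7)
The plan is to reduce the lemma to a counting inequality on odd components and then establish it by a parity argument on the ear structure. Writing $Z = B' \setminus V(H^{(i)})$, I have $B' = B \sqcup Z$ with $|B'| = |B| + |Z|$. Since $B'$ is a barrier of $H^{(i+1)}$, we get $|B| + |Z| = \odd(H^{(i+1)} - B')$. Because $H^{(i)}$ is 1-extendable and hence matchable, Tutte's Theorem automatically gives $|B| \geq \odd(H^{(i)} - B)$. It therefore suffices to prove
\[
\odd(H^{(i+1)} - B') - \odd(H^{(i)} - B) \leq |Z|,
\]
as this would force $|B| \leq \odd(H^{(i)} - B)$ and make $B$ a barrier of $H^{(i)}$.

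To establish this inequality in the single-ear case with ear $\epsilon$ of even order $2t$ and endpoints $x, y$, I would compare the two graphs directly. The graph $H^{(i+1)} - B'$ is obtained from $H^{(i)} - B$ by attaching the ``ear skeleton''---the path $x, z_1, \ldots, z_{2t}, y$ with the vertices in $B'$ deleted. This skeleton breaks into consecutive subpaths: up to two \emph{boundary pieces} (containing $x$ or $y$) attach through those endpoints to the components $C_x, C_y$ of $H^{(i)} - B$ when $x, y \notin B$, while the remaining \emph{middle pieces} become new isolated components of $H^{(i+1)} - B'$. Hence the change in $\odd(\cdot)$ comes entirely from parity flips of the modified boundary components plus the number of odd middle pieces.

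Bounding this by $|Z|$ is a parity count that leverages the even order of the ear. Let $j_1 < \cdots < j_{|Z|}$ list the indices of removed internal vertices. An odd middle piece corresponds to a gap $j_{l+1} - j_l \geq 2$ with $j_l, j_{l+1}$ of the same parity, while an even middle piece corresponds to a gap with opposite parities. The tightest subcase is when both boundary-piece sizes $s_x = j_1$ and $s_y = 2t + 1 - j_{|Z|}$ are even, forcing $j_1$ even and $j_{|Z|}$ odd; the sequence of removed indices must then undergo an odd (hence nonzero) number of parity switches. Since adjacent pairs $j_{l+1} = j_l + 1$ always contribute a parity switch with no middle piece, and gap-$\geq 2$ pairs of opposite parity contribute a switch with an even middle piece, balancing these shows the number of odd middle pieces is at most $|Z| - 2$, leaving exactly enough slack for the two possible boundary parity flips. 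The other parities of $(s_x, s_y)$ and the cases with $x$ or $y$ in $B$ are handled by analogous bookkeeping. For the two-ear augmentation, the same analysis applies to each ear's contribution separately and the bounds sum to $|Z_1| + |Z_2| = |Z|$ (noting that the intermediate graph $H^{(i)} + \epsilon_1$ is matchable, so Tutte's inequality still applies in the same way). The main obstacle will be exactly this parity bookkeeping in the both-boundary-even subcase, where the tight $\leq |Z|$ bound (as opposed to $\leq |Z|+1$) depends crucially on the even-order condition guaranteed by the Two Ear Theorem.
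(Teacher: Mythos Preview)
Your approach is correct but considerably more involved than necessary. The paper sidesteps all of your parity bookkeeping by factoring the comparison through the intermediate graph $H^{(i+1)} - B$ (removing only $B$, not all of $B'$, from the larger graph). Two short observations then suffice. First, $\odd(H^{(i+1)} - B) \leq \odd(H^{(i)} - B)$, because attaching an even-order ear to $H^{(i)} - B$ either merges two components (dropping the odd count by $0$ or $2$), appends an even tail to one component, or creates a new isolated even path---never increasing the odd count. Second, $\odd(H^{(i+1)} - B') \leq \odd(H^{(i+1)} - B) + |Z|$, because each vertex of $Z$ has degree two in $H^{(i+1)}$ (hence degree at most two after earlier deletions), and removing a vertex of degree at most two changes the odd-component count by at most $+1$: it either flips the parity of a single component or splits one component into two whose sizes sum to one less than before. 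Chaining these with Tutte's inequality $\odd(H^{(i)} - B) \leq |B|$ forces equality throughout.

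Your direct comparison of $H^{(i+1)} - B'$ with $H^{(i)} - B$ obliges you to track boundary pieces and middle pieces simultaneously, which is precisely what produces the delicate ``both-boundary-even'' subcase. In the paper's two-step route, the even-order hypothesis is consumed entirely by the first inequality (ear addition preserves or lowers $\odd$), and the vertex removals are handled one at a time in a completely uniform way---no case split on the positions $j_l$ is needed, and the two-ear case requires no separate treatment. Both arguments are valid; the paper's is simply a few lines rather than a case analysis.
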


\begin{proof}
	There are $|B'|$ odd components in $H^{(i+1)} - B'$.
	There are at most $|B|$ odd components in $H^{(i)} - B$,
		which may combine when the ear(s) are added to make $H^{(i+1)}$.
	
	Let $x_1,x_2,\dots,x_r$ be the vertices 
		in $B' \setminus B$.
	Each $x_i$ is not in $V(H^{(i)})$ so it is an internal vertex
		of an augmented ear.
	Therefore, $x_i$ has degree two in $H^{(i+1)}$,
		so removing $x_i$ from $H^{(i+1)} - (B \cup \{x_1,\dots,x_{i-1}\})$
		increases the number of odd components by at most one.
	Hence, the number of odd components of $H^{(i+1)} - B'$
		is at most the number of odd components of $H^{(i)}-B$
		plus the number of vertices in $B' \setminus B$.
	These combine to form the inequalities
	\[ |B'| = \odd(H^{(i+1)}-B') 
		\leq \odd(H^{(i+1)}-B) + |B'\setminus B| 
		\leq \odd(H^{(i)}-B) + |B'\setminus B| 
		\leq |B| + |B'\setminus B| = |B'|.\]
	Equality holds above, so $B$ is a barrier in $H^{(i)}$.
\end{proof}	
	
As one-ear augmentations and two-ear augmentations are applied to each $H^{(i)}$,
	we update the list $\cB(H^{(i+1)})$ of barriers in $H^{(i+1)}$
	using the list $\cB(H^{(i)})$ of barriers in $H^{(i)}$.
	
\begin{lemma}\label{lma:evolutionofbarriersEVEN}
	Let $B$ be a barrier of a 1-extendable graph $H^{(i)}$.
	Let $H^{(i)} \subset H^{(i+1)}$ be a 1-extendable ear augmentation
		of $H^{(i)}$ using one ($\eps_1$) or two ($\eps_1, \eps_2$) ears.
	\begin{cem}
		\item If any augmenting ear connects
				vertices from different components of $H^{(i)}-B$,
				then $B$ is not a barrier in $H^{(i+1)}$,
				and neither is any $B' \supset B$
				where $B = B' \cap V(H^{(i)})$.
		\item Otherwise, if $B$ does not contain any endpoint of
				the augmented ear(s), then 
				$B$ is a barrier of $H^{(i+1)}$,
				but $B \cup S$ for any non-empty subset
				$S \subseteq V(H^{(i+1)})\setminus V(H^{(i)})$
				is not a barrier of $H^{(i+1)}$.
		\item If $B$ contains both endpoints of some ear $\eps_i$, then
			$B$ is not a barrier in $H^{(i+1)}$ and neither is any $B' \supset B$.
			
		\item If $B$ contains exactly one endpoint ($x$) of one of the augmented ears
			($\eps_j$),
			then 
			\begin{cem}
				\item $B$ is a barrier of $H^{(i+1)}$.
				\item For $S \subseteq V(H^{(i+1)})\setminus V(H^{(i)})$, 
						$B \cup S$ is a barrier of $H^{(i+1)}$ if and only if
						$S$ contains only internal vertices of $\eps_j$ of even distance
						from $x$ along $\eps_j$.
			\end{cem}
		\item If $B = \emptyset$, 
				then for any subset $S \subseteq V(H^{(i+1)})\setminus V(H^{(i)})$
				$B \cup S$ is a barrier of $H^{(i+1)}$ 
				if and only if
				the vertices in $S$ are on a single
				ear $\eps_j$ and the pairwise distances along
				$\eps_j$ are even.
		\end{cem}
\end{lemma}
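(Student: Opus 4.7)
The plan is to apply the chain of inequalities used in the proof of Lemma~\ref{lma:pushdownbarrier}. For any candidate $B' = B \cup S$ with $S \subseteq V(H^{(i+1)}) \setminus V(H^{(i)})$, the inequalities
\[|B'| = \odd(H^{(i+1)} - B') \leq \odd(H^{(i+1)} - B) + |S| \leq \odd(H^{(i)} - B) + |S| = |B| + |S| = |B'|\]
must all be tight for $B'$ to be a barrier of $H^{(i+1)}$. This splits the argument into a \emph{global} condition $\odd(H^{(i+1)} - B) = \odd(H^{(i)} - B)$ (the ear augmentation does not lose odd components after removing~$B$) and a \emph{local} condition that each vertex of $S$ added to $B$ creates exactly one new odd component. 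I also exploit Lemma~\ref{lma:allodd}: if $B'$ is a nonempty barrier, every component of $H^{(i+1)} - B'$ must be odd.

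First I would classify each augmented ear $\eps$ (of even order $r$) by the $B$-status of its endpoints. If both endpoints lie in $B$, the interior of $\eps$ becomes an isolated path of $r$ vertices, producing an even component of $H^{(i+1)} - B$ and dispatching Case~3 directly via Lemma~\ref{lma:allodd}. If neither endpoint lies in $B$ but they sit in different components of $H^{(i)} - B$, then merging two odd components through $r$ even ear vertices yields one even component, strictly reducing $\odd(H^{(i+1)} - B)$ below $|B|$; this dispatches Case~1, and the slack also forbids any $B' \supseteq B$. In every remaining configuration, $\eps$ either extends or attaches $r$ even vertices to a single existing component, preserving the odd-component count, so the global inequality is tight and the argument proceeds to the local condition for Cases~2, 4, and~5.

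For the local condition, parametrize $S_\eps = S \cap V(\eps) = \{x_{i_1}, \dots, x_{i_k}\}$ with $i_1 < \cdots < i_k$ and count odd components contributed by $\eps$ after removing $S_\eps$. A direct parity calculation gives
\[\Delta_\eps \;=\; \alpha_\eps(i_1, i_k) \;+\; \#\{j : i_j \equiv i_{j+1} \pmod{2}\} \;-\; \delta_\eps,\]
where $\alpha_\eps \in \{0,1,2\}$ encodes the parity of the ``attached-end'' sub-paths (depending on which endpoints lie in $B$ and on the parity of the affected component) and $\delta_\eps \in \{0,1\}$ is the ear's original odd-component contribution before removing $S_\eps$. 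The key arithmetic fact is that $\Delta_\eps \leq k$, with equality in only two scenarios: (a) exactly one endpoint of $\eps$ in $B$ with all $i_j$ at even distance from that endpoint (matching Case~4), and (b) $B = \emptyset$ with $S$ concentrated on a single ear whose indices all share the same parity (matching Case~5). In the ``neither endpoint in $B$, affected component odd'' scenario the bound drops to $\Delta_\eps \leq k - 2$, so any nonempty $S_\eps$ strictly breaks tightness, giving Case~2 and forbidding additions on any non-$\eps_j$ ear in Case~4. Summing $\sum_\eps \Delta_\eps$ against $|S|$ then yields each of the five promised characterizations.

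The main obstacle is the parity bookkeeping inside $\Delta_\eps$: verifying that the ``even distance from $x$'' condition of Case~4 and the ``pairwise even distances'' condition of Case~5 are \emph{exactly} the combinatorial maximizers, and carefully distinguishing whether the affected component $|C|$ is odd (when $B \neq \emptyset$) or even (when $B = \emptyset$) since this flips the parity indicator $\alpha_\eps$. Once the per-ear formula is pinned down, each of the five cases follows by a short accounting argument.
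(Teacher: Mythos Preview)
Your proposal is correct and takes essentially the same approach as the paper: both start from the inequality chain of Lemma~\ref{lma:pushdownbarrier}, classify each augmented ear by which of its endpoints lie in $B$, and settle each case by counting the odd path segments that removing $S$ creates inside the ear. Your per-ear quantity $\Delta_\eps$ is just a uniform repackaging of the paper's direct case-by-case component counting; the underlying parity bookkeeping (gaps of even length, even distance from the anchored endpoint) is identical.
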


\begin{proof}
	Let $B'$ be a barrier in $H^{(i+1)}$.
	Lemma \ref{lma:pushdownbarrier} gives
		$B = B' \cap V(H^{(i)})$ is a barrier of $H^{(i)}$,
		and $H^{(i)} - B$ has $|B|$ odd connected components.
	Thus, the barriers of $H^{(i+1)}$ are built from barriers $B$ in $H^{(i)}$
		and adding edges from the new ear(s).
	
	\case{1} If an ear $\eps_j$ spans two components of $H^{(i)} - B$,
		then the number of components in $H^{(i+1)} - B$
		is at most $|B| - 2$.
	Any vertices from $\eps_j$ added to $B$ can only increase
		the number of odd components by at most one at a time,
		but also increases the size of $B$ by one.
	Hence, vertices in $V(H^{(i+1)})\setminus V(H^{(i)})$ can be added
		to $B$ to form a barrier in $H^{(i+1)}$.
		
	\case{2} If each ear $\eps_j$ spans points in the same components of $H^{(i)} - B$,
		then the number of odd connected components in $H^{(i+1)} - B$ 
		is the same as in $H^{(i)}-B$, which is $|B|$.
	Hence, $B$ is a barrier of $H^{(i+1)}$.
	However, adding a single vertex from any $e_i$
		does not separate any component of $H^{(i+1)} - B$, 
		but adds a count of one to $|B|$.
	Adding any other vertices from $\eps_j$ 
		to $B$ can only increase the number of components by one
		but increases $|B|$ by one.
	Hence, no non-empty set of vertices from the augmented ears
		can be added to $B$ to form a barrier of $H^{(i+1)}$.
	
	\case{3} Suppose $B$ contains both endpoints of an ear $\eps_j$.
		If $\eps_j$ is a trivial ear, then it is an extendable edge.
		If $B'\supseteq B$ is a barrier in $H^{(i+1)}$, 
			this violates Proposition \ref{prop:freeextendable}
			which states edges within barriers are free edges.
		If $\eps_j$ has internal vertices, they 
			form an even component in $H^{(i+1)} - B$.
		By Lemma \ref{lma:allodd},
			this implies that $B$ is not a barrier.
		Any addition of internal vertices from $\eps_j$
			to form $B' \supset B$ will add at most one odd component each,
			but leave an even component in $H^{(i+1)} - B'$.
		It follows that no such $B'$ is a barrier in $H^{(i+1)}$.
	
	\case{4} Note that  If an ear $\eps_j$ has an endpoint within $B$,
			then in $H^{(i+1)} - B$, the internal vertices of $\eps_j$
			are attached to the odd component of $H^{(i+1)}-B$
			containing the opposite endpoint.
		Since there are an even number of internal vertices on $\eps_j$,
			then $H^{(i+1)}-B$ has the same number of odd connected components 
			as $H^{(i)}-B$, which is $|B|$.
		Hence, $B$ is a barrier in $H^{(i+1)}$.
		
		Let the ear $\eps_j$ be given as a path of vertices $x_0x_1x_2\dots x_k$, 
			where $x_0 = x$ and $x_k$ is the other endpoint of $\eps_j$.
		Let $S$ be a subset of $\{x_1,\dots,x_{k-1}\}$,
			the internal vertices of $\eps_j$.
		The number of components given by removing $S$ 
			from the path $x_1x_2\cdots x_{k-1}x_k$
			is equal to the number of \emph{gaps} in $S$:
			the values $a$ so that $x_a$ is in $S$ 
			and $x_{a+1}$ is not in $S$.
		These components are all odd if and only if 
			for each $x_a$ and $x_{a'}$ in $S$,
			$|a - a'|$ is even.
		Thus, $B \cup S$ is a barrier in $H^{(i+1)}$ if and only if
			$S$ is a subset of the internal vertices
			which are an even distance from $x_0$.
\end{proof}

Lemma \ref{lma:evolutionofbarriersEVEN}
	describes all the ways a barrier $B \in \cB(H)$ 
	can extend to a barrier $B' \in \cB(H+\eps_1)$ or $B' \in \cB(H+\eps_1+\eps_2)$.
Note that the barriers $B'$ which use the internal vertices of $\eps_1$ are 
	independent of those which use the internal vertices of $\eps_2$,
	unless one of the ears spans multiple components of $H+\eps_1+\eps_2 - B'$.
This allows us to define a \emph{pseudo-barrier list} $\cB(H+\eps)$ for
	almost 1-extendable graphs $H+\eps$, where $H$ is 1-extendable.
During the generation algorithm, we consider a single-ear augmentation
	$H^{(i)} \subset H^{(i)} + \eps_i = H^{(i+1)}$.
Regardless of if $H^{(i)}$ or $H^{(i+1)}$ is almost 1-extendable,
	we can update $\cB(H^{(i+1)})$ by 
	taking each $B \in \cB(H^{(i)})$ 
	and adding each $B \cup S$ that satisfies Lemma \ref{lma:evolutionofbarriersEVEN}
	to $\cB(H^{(i+1)})$. 
This process generates all barriers $B' \in \cB(H^{(i+1)})$ so that
	$B' \cap V(H^{(i)}) = B$, so each barrier is generated exactly once.

In addition to updating the barrier list in an ear augmentation $H^{(i)} \subset H^{(i+1)}$, 
	we determine the conflicts between these barriers.

\begin{lemma}\label{lma:conflictsinear}
	Let $H^{(i)} \subset H^{(i+1)}$ be a 1-extendable ear augmentation using one $(\eps_1)$ 
		or two $(\eps_1,\eps_2)$ ears.
	Suppose $B_1'$ and $B_2'$ are barriers in $H^{(i+1)}$
		with barriers $B_1 = V(H^{(i)})\cap B_1'$ and $B_2= V(H^{(i)})\cap B_2'$
		of $H^{(i)}$.
	The barriers $B_1'$ and $B_2'$ conflict in $H^{(i+1)}$ if and only if 
		one of the following holds:
		(1) $B_1' \cap B_2' \neq \emptyset$,
		(2) $B_1$ and $B_2$ conflict in $H^{(i)}$, or
		(3) $B_1'$ and $B_2'$ share vertices in some ear ($\eps_j$), 
			with vertices $x_0x_1x_2\dots x_k$,
			and there exist indices $0 \leq a_1 < a_2 < a_3 < a_4 \leq k$ so that 
			 $x_{a_1}, x_{a_3} \in B_1'$ and $x_{a_2}, x_{a_4} \in B_2'$.
\end{lemma}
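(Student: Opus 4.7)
The plan is to establish the biconditional by treating the forward and reverse directions separately, with case analysis in each.

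For the forward direction, condition (1) immediately gives part (a) of the definition of conflict. For condition (2), I would use the inclusion $H^{(i)} - B_2 \subseteq H^{(i+1)} - B_2'$ as subgraphs, which follows from $H^{(i)} \subseteq H^{(i+1)}$ together with $B_2' \cap V(H^{(i)}) = B_2$ (per Lemma \ref{lma:pushdownbarrier}). Combined with Case 1 of Lemma \ref{lma:evolutionofbarriersEVEN}, which forces every augmenting ear to have both endpoints in a single component of $H^{(i)} - B_2$ (else $B_2'$ would not be a barrier), this yields that distinct components of $H^{(i)} - B_2$ remain in distinct components of $H^{(i+1)} - B_2'$, so spanning multiple components persists. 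For condition (3), I would exploit that each internal vertex of $\eps_j$ has degree two in $H^{(i+1)}$, so the component of $x_{a_3}$ in $H^{(i+1)} - B_2'$ is the maximal sub-path of $\eps_j$'s interior containing $x_{a_3}$ and bounded by $x_{a_2}$ and $x_{a_4}$; meanwhile $x_{a_1}$ is separated from this sub-path by $x_{a_2} \in B_2'$ along the ear, and its component either extends through the endpoint $x_0$ into $V(H^{(i)})$ or terminates at an earlier $B_2'$ vertex, in either case disjoint from $x_{a_3}$'s confined interior sub-path.

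For the reverse direction, assume $B_1'$ and $B_2'$ conflict in $H^{(i+1)}$ but neither (1) nor (2) holds. From not (1), $B_1' \cap B_2' = \emptyset$. From not (2), $B_1$ and $B_2$ are disjoint and, without loss of generality, $B_1$ lies in a single component $C$ of $H^{(i)} - B_2$; using $H^{(i)} - B_2 \subseteq H^{(i+1)} - B_2'$, all of $B_1$ lies in a single component $D$ of $H^{(i+1)} - B_2'$. Since the spanning must be the source of conflict, $B_1'$ has a vertex $u$ outside $D$. As $B_1 \subseteq D$, $u$ is not in $V(H^{(i)})$, so $u = x_a$ is an internal vertex of some ear $\eps_j$. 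By Cases 4 and 5 of Lemma \ref{lma:evolutionofbarriersEVEN}, either $B_1$ contains exactly one endpoint $w_0$ of $\eps_j$ (Case 4) or $B_1 = \emptyset$ (Case 5).

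In Case 4, I would trace $u$'s component along $\eps_j$ in both directions. Toward $w_0 \in B_1 \subseteq D$, some $x_{a'} \in B_2'$ with $a' < a$ must block (else $u$ would reach $w_0 \in D$); toward $w_k$, either $w_k \in B_2$ makes $x_k = w_k \in B_2'$ itself the blocker, or by Case 1 of Lemma \ref{lma:evolutionofbarriersEVEN} $w_k$ lies in the same component of $H^{(i)} - B_2$ as $w_0$ and hence in $D$, forcing a blocker $x_{a''} \in B_2'$ with $a < a'' \leq k$. Setting $(a_1,a_2,a_3,a_4)=(0,a',a,a'')$ realizes the alternating pattern since $x_0 = w_0 \in B_1'$. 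In Case 5, all of $B_1'$ lies on a single ear $\eps_{j_0}$, so two $B_1'$ vertices in different components of $H^{(i+1)} - B_2'$ must be separated along the ear by some $x_{a_2} \in B_2'$; using Case 1 of Lemma \ref{lma:evolutionofbarriersEVEN} to preclude both $B_1'$ vertices from extending unobstructed to opposite endpoints (whose components would coincide via $H^{(i)} - B_2$), a further $B_2'$ vertex must appear on the ``outer'' side of one of them, supplying $x_{a_4}$. The main obstacle is precisely this production of the fourth alternating vertex $x_{a_4}$: the argument leans critically on Case 1 of Lemma \ref{lma:evolutionofbarriersEVEN} to restrict where ear endpoints may lie in $H^{(i)} - B_2$, and on the degree-2 structure of internal ear vertices to confine $u$'s component to the ear itself.
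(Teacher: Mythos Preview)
Your forward direction matches the paper's argument closely: both use Lemma~\ref{lma:evolutionofbarriersEVEN} (Case~1) to show that components of $H^{(i)}-B_2$ persist as distinct components of $H^{(i+1)}-B_2'$, and both treat the interlacing pattern in~(3) by observing that $x_{a_2},x_{a_4}\in B_2'$ trap $x_{a_3}$ on a sub-path of the ear disjoint from $x_{a_1}$.

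Your reverse direction is where you diverge from the paper. The paper argues by contrapositive: assuming none of (1)--(3), it first pins both $B_1'\setminus B_1$ and $B_2'\setminus B_2$ to a \emph{single common ear} $\eps_j$, and then observes that the failure of~(3) forces the $B_1'$--vertices and $B_2'$--vertices along $\eps_j$ to occur in two contiguous blocks, from which non-conflict follows directly in both directions at once. You instead assume a conflict and explicitly \emph{construct} the indices $a_1<a_2<a_3<a_4$ by tracing from an ``escaped'' vertex $u\in B_1'\setminus D$ along the ear and locating $B_2'$ blockers on either side. Your approach is more constructive and handles the $B_1=\emptyset$ case separately; the paper's block argument is shorter but implicitly leans on both ear endpoints lying in $B_1'\cup B_2'$.

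One gap to tighten: when you write ``since the spanning must be the source of conflict, $B_1'$ has a vertex $u$ outside $D$,'' you are silently assuming the conflict takes the form ``$B_1'$ spans multiple components of $H^{(i+1)}-B_2'$'' rather than the symmetric form. This WLOG is legitimate, but only because condition~(3) is itself symmetric in $B_1',B_2'$: the ear labelling $x_0,\dots,x_k$ has no preferred direction, and reversing it converts a $B_2'B_1'B_2'B_1'$ pattern into the required $B_1'B_2'B_1'B_2'$ pattern. You should state this explicitly. (Your earlier ``without loss of generality, $B_1$ lies in a single component of $H^{(i)}-B_2$'' is not actually a WLOG at all---it is forced by not-(2); the real WLOG is the choice of which barrier is doing the spanning.)
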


\begin{proof}
	Note that by definition, if $B_1'\cap B_2' \neq\emptyset$, 
		then $B_1'$ and $B_2'$ conflict.
	We now assume that $B_1' \cap B_2' = \emptyset$.
	
	If $B_1$ or $B_2$ conflict in $H^{(i)}$, then without loss of generality,
		$B_2$ has vertices in multiple components of $H^{(i)}-B_1$.
	Since $B_1'$ is a barrier in $H^{(i+1)}$, 
		Lemma \ref{lma:evolutionofbarriersEVEN} 
		gives that no ear $\eps_j$ spans multiple components
		of $H^{(i)}-B_1$, and the components of $H^{(i)}-B_1$
		correspond to components of $H^{(i+1)}-B_1$.
	Hence, $B_2$ also spans multiple components 
		of $H^{(i+1)}-B_1$ and $B_1'$ and $B_2'$ 
		conflict in $H^{(i+1)}$.

	Now, consider the case that the disjoint barriers $B_1$ and $B_2$ 
		did not conflict in $H^{(i)}$.
	Since $B_1$ and $B_2$ are barriers of $H^{(i)}$, then
		the vertices in $B_1'\setminus B_1$ are 
		limited to one ear $\eps_{j_1}$ of the augmentation,
		and similarly the vertices of $B_2'\setminus B_2$
		are within a single ear $\eps_{j_2}$.
	Since $B_1$ and $B_2$ do not conflict, all of the vertices within
		$B_2$ lie in a single component of $H^{(i)} - B_1$:
		the component containing the ear $\eps_{j_1}$.
	Similarly, the vertices of $B_1$ 
		are contained in the component of $H^{(i)} - B_2$
		that contains the endpoints of $\eps_{j_2}$.
		
	The components of $H^{(i+1)} - B_1$ 
		are components in $H^{(i+1)}-B_1'$
		except the component containing
		the ear $\eps_{j_1}$
		is cut into smaller components
		for each vertex in $\eps_{j_1}$ and $B_1'$.
	In order to span these new components, 
		$B_2'$ must have a vertex within $\eps_{j_1}$.
	Therefore, the ears $\eps_{j_1}$ and $\eps_{j_2}$ are the same ear,
		given by vertices $x_0, x_1, \dots, x_{k}$.

	Suppose there exist indices $0 \leq a_1 < a_2 < a_3 < a_4 \leq k$ so that
		$x_{a_1}$ and $x_{a_3}$ are in $B_1'$ and
		$x_{a_2}$ and $x_{a_4}$ are in $B_2'$.
	Then, the vertices $x_{a_1}$ and $x_{a_3}$ of $B_1'$
		are in different components of $H^{(i+1)}-B_2'$, since
		every path from $x_{a_3}$ to $x_{a_1}$ in 
		$H^{(i+1)}$ passes through one of the vertices $x_{a_2}$ or $x_{a_4}$.
	Hence, $B_1'$ and $B_2'$ conflict.
	
	If $B_1'$ and $B_2'$ do not admit such indices $a_1, \dots, a_4$, then
		listing the vertices $x_0, x_1, x_2, \dots, x_k$ in order
		will visit those in $B_1'$ and $B_2'$ in two contiguous blocks.
	In $H^{(i+1)}-B_1'$, the block containing the vertices in $B_2'$ 		
		remain connected to the endpoint closest to the block, and
		hence $B_2'$ will not span more than one component of $H^{(i+1)} - B_1'$.
	Similarly, $B_1'$ will not span more than one component of $H^{(i+1)} - B_2'$.
	$B_1'$ and $B_2'$ do not conflict in this case.
\end{proof}

The following corollary is
	crucial to the bound in Lemma \ref{lma:pruning}.

\begin{corollary}
	\label{cor:maxbarrierinear}
	Let $H^{(i)} \subset H^{(i+1)}$ be a 1-extendable ear augmentation using one $(\eps_1)$ 
		or two $(\eps_1,\eps_2)$ ears.
	Let $\cI$ be a maximal cover set in $\cC(H^{(i+1)})$
		and
		$S$ be the set of internal vertices $x$ of an ear $\eps_j$
		such that the barrier in $\cI$ containing $x$
		has at least one vertex in $V(H^{(i)})$.
	Then, $S$ contains at most half of the internal vertices of $\eps_j$.
\end{corollary}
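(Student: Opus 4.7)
The plan is to pin down which barriers in $\cI$ can actually contribute internal vertices of $\eps_j$ to $S$, and then to apply the non-conflict condition of Lemma \ref{lma:conflictsinear} to get a counting bound. Write the ear as $\eps_j = x_0 x_1 \cdots x_r x_{r+1}$, where $r$ (even) is the order, so the internal vertices are $x_1,\dots,x_r$, and the claim becomes $|S|\leq r/2$.

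For any $x \in S$, let $B' \in \cI$ be the barrier containing $x$ and put $B = B' \cap V(H^{(i)})$. By Lemma \ref{lma:pushdownbarrier}, $B$ is a barrier of $H^{(i)}$, and by the definition of $S$, $B \neq \emptyset$. Running through the cases of Lemma \ref{lma:evolutionofbarriersEVEN}, I would observe that the only way $B'$ can pick up any internal vertex of $\eps_j$ while keeping $B$ nonempty is Case~(4): $B$ contains exactly one endpoint of $\eps_j$, and the internal $\eps_j$-vertices of $B'$ lie at even distance from that endpoint. Since $\cI$ is a partition, this forces every $x \in S$ to lie in one of exactly two barriers of $\cI$, namely the barrier $B_0'$ containing $x_0$ and the barrier $B_{r+1}'$ containing $x_{r+1}$, while Case~(3) rules out the possibility $B_0' = B_{r+1}'$. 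Setting $T_0 = B_0'\cap \{x_1,\dots,x_r\}$ and $T_{r+1} = B_{r+1}'\cap \{x_1,\dots,x_r\}$, the parity condition of Case~(4) gives
\[ T_0 \subseteq \{x_2,x_4,\dots,x_r\}, \qquad T_{r+1} \subseteq \{x_1,x_3,\dots,x_{r-1}\}, \]
and $S$ is the disjoint union $T_0 \cup T_{r+1}$.

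Next I would invoke Lemma \ref{lma:conflictsinear}(3) on the pair $B_0', B_{r+1}'$, which do not conflict because $\cI$ is a cover set. Since $x_0 \in B_0'$ and $x_{r+1} \in B_{r+1}'$ already sit at the endpoints of $\eps_j$, any index $b \in T_{r+1}$ with a larger index $a \in T_0$ would produce the forbidden alternating-$4$ pattern $x_0,x_b,x_a,x_{r+1}$ of types $B_0',B_{r+1}',B_0',B_{r+1}'$. Hence every index in $T_0$ is strictly smaller than every index in $T_{r+1}$. Letting $2a$ denote the largest index in $T_0$ and $2b-1$ the smallest in $T_{r+1}$ (treating the empty-set cases degenerately), we have $2a < 2b-1$, so $a\leq b-1$, and
\[ |S| \;=\; |T_0|+|T_{r+1}| \;\leq\; a + \Big(\tfrac{r}{2}-(b-1)\Big) \;\leq\; \tfrac{r}{2}. \]

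The main obstacle is the structural reduction in the second paragraph: sifting through Lemma \ref{lma:evolutionofbarriersEVEN} to confirm that, despite the several ways a barrier of $H^{(i)}$ can extend under an ear augmentation, every barrier of $\cI$ that simultaneously meets $V(H^{(i)})$ and the interior of $\eps_j$ must in fact be one of $B_0'$ or $B_{r+1}'$. Once this dichotomy is in hand, the non-conflict condition collapses to index-separation along $\eps_j$ and the half-bound drops out of a short count.
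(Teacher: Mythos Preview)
Your proposal is correct and follows essentially the same approach as the paper's proof: both arguments use Lemma~\ref{lma:evolutionofbarriersEVEN} to show that any barrier of $\cI$ contributing to $S$ must contain an endpoint of $\eps_j$ (hence there are at most two such barriers, $B_0'$ and $B_{r+1}'$), and then invoke the non-conflict criterion of Lemma~\ref{lma:conflictsinear} together with the even-distance parity condition to bound $|S|$ by $r/2$. Your index-separation count is a slightly more explicit version of the paper's ``two consecutive blocks, each at most half filled'' phrasing, but the underlying idea is identical.
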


\begin{proof}
	Let $A' \subset \cI$ be the set of barriers containing a vertex $x$ in $\eps_j$ and 
		a vertex $y$ in $V(H^{(i)})$.
	For a barrier $B$ to contain an internal vertex of $\eps_j$
		and a vertex in $V(H^{(i)})$, Lemma \ref{lma:evolutionofbarriersEVEN}
		states that $B$ must contain
		at least one of the endpoints of the ear $\eps_j$.
	Since each barrier in $A'$ contains and endpoint of $\eps_j$ and 
		non-conflicting barriers are non-intersecting, there are at most
		two barriers in $A'$.
	
	If there is exactly one barrier $B$ in $A'$, 
		by Lemma \ref{lma:evolutionofbarriersEVEN}
		it must contain vertices an even distance away from the endpoint
		contained in $B$, and hence at most half of the internal vertices 
		of $\eps_j$ are contained in $B$.
	
	If there are two non-conflicting barriers $B_1$ and $B_2$ in $A'$,
		then by Lemma \ref{lma:conflictsinear}
		the vertices of $B_1$ and $B_2$ within $\eps_j$
		come in two consecutive blocks along $\eps_j$.
	Since each barrier includes only vertices of even distance apart, 
		$B_1$ contains at most half of the vertices in one block
		and $B_2$ contains at most half of the vertices in the other block.
	Hence, there are at most half of the internal vertices of $\eps_j$ in $S$.
\end{proof}


\section{Bounding the maximum reachable excess}
\label{ssec:pruning}

In order to prune search nodes, we wish to detect when	
	it is impossible to extend the current 1-extendable graph $H$
	with $q$ perfect matchings to 
	a 1-extendable graph $H'$ with $p$ perfect matchings
	so that $H'$ has an elementary supergraph $G' \in \cE(H')$
	with excess $c(G') \geq c$.
The following lemma gives a method for 
	bounding $c(G')$ using
	the maximum excess $c(G)$ over all elementary supergraphs $G$ in $\cE(H)$.

\begin{lemma}\label{lma:pruning}
	Let $H$ be a 1-extendable graph on $n$ vertices with $\Phi(H) = q$. 
	Let $H'$ be a 1-extendable supergraph of $H$ 
		built from $H$ by a graded ear decomposition.
	Let $\Phi(H') = p > q$
		and $N = n(H')$.
	Choose $G \in \cE(H)$ and $G' \in \cE(H')$ 	
		with the maximum number of edges in each set.
	Then, \[c(G') \leq c(G) + 2(p-q) - \frac{1}{4}(N-n)(n-2).\]
\end{lemma}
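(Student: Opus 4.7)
The plan is to induct on the graded ear decomposition of $H'$ from $H$, proving a per-step estimate and then telescoping. Fix a decomposition $H = H^{(0)} \subset H^{(1)} \subset \cdots \subset H^{(s)} = H'$, let $G_i$ be a maximum element of $\cE(H^{(i)})$, and write $n_i = n(H^{(i)})$, $\Delta n_i = n_{i+1}-n_i$, and $\Delta\Phi_i = \Phi(H^{(i+1)}) - \Phi(H^{(i)}) \geq 1$ (strict, as noted after Corollary~\ref{cor:twoears}).

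The per-step target I would aim for is
\[
c(G_{i+1}) - c(G_i) \;\leq\; 2\Delta\Phi_i - \tfrac{\Delta n_i(n_i-2)}{4}.
\]
Summing over $i$ telescopes the $\Phi$-differences to $p-q$ and, using the trivial bound $n_i \geq n$ along with $\sum_i \Delta n_i = N-n$, controls the $(n_i-2)$ factors by $(n-2)$, producing $c(G')-c(G) \leq 2(p-q) - \tfrac{(n-2)(N-n)}{4}$, which is the lemma.

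To establish the per-step estimate, decompose each elementary graph into its extendable and free subgraphs: $e(G_i) = e(H^{(i)}) + f(G_i)$. A step using $k_i \in \{1,2\}$ ears of total internal-vertex count $\Delta n_i$ adds exactly $\Delta n_i + k_i$ extendable edges, and since each step raises $\Phi$ by at least one, $k_i \leq 2\Delta\Phi_i$. Substituting into
\[
c(G_{i+1}) - c(G_i) = (\Delta n_i + k_i) + \bigl[f(G_{i+1}) - f(G_i)\bigr] - \tfrac{\Delta n_i(2n_i + \Delta n_i)}{4}
\]
reduces the per-step target to the free-edge inequality $f(G_{i+1}) - f(G_i) \leq 2\Delta\Phi_i - k_i + \tfrac{\Delta n_i(n_{i+1}-2)}{4}$. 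By Theorem~\ref{thm:barrierconflictgraph}, $f(G_{i+1})$ equals $\sum_{B} \binom{|B|}{2}$ over a maximum cover set $\cI$ of $\cC(H^{(i+1)})$. Splitting each $B \in \cI$ as $(B \cap V(H^{(i)})) \cup (B \setminus V(H^{(i)}))$, Lemma~\ref{lma:pushdownbarrier} identifies $B \cap V(H^{(i)})$ as a barrier of $H^{(i)}$, so the sum decomposes into an ``old'' contribution (tied back to $f(G_i)$) and a ``new'' contribution from ear-internal vertices and cross terms. Corollary~\ref{cor:maxbarrierinear} is the key quantitative input: at most half of each added ear's internal vertices may lie in a barrier of $\cI$ containing a $V(H^{(i)})$-vertex, which simultaneously bounds the cross term $|B \cap V(H^{(i)})| \cdot |B \setminus V(H^{(i)})|$ and the purely-new term $\binom{|B \setminus V(H^{(i)})|}{2}$, the latter further constrained by Lemma~\ref{lma:evolutionofbarriersEVEN}.

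The main obstacle is precisely this per-step free-edge bound. The subtle point is that restricting $\cI$ to $V(H^{(i)})$ need not yield a cover set of $\cC(H^{(i)})$, because barriers that are non-conflicting in $H^{(i+1)}$ may conflict in $H^{(i)}$ once the ear edges are removed; I would handle this by extending the restricted family with singletons to produce a valid cover set and then arguing that the resulting cover of $V(H^{(i)})$ certifies at least the ``old'' piece of $f(G_{i+1})$ against $f(G_i)$. Marrying this bookkeeping with the one-half bound of Corollary~\ref{cor:maxbarrierinear} yields exactly the slack $2\Delta\Phi_i - k_i + \Delta n_i(n_{i+1}-2)/4$ needed, completing the per-step estimate and, via the telescoping above, the lemma.
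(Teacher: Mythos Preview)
Your proposal is correct and follows essentially the same route as the paper: induct along a non-refinable graded ear decomposition, prove a per-step bound on $c(G^{(i+1)})-c(G^{(i)})$ by restricting a maximum cover set of $H^{(i+1)}$ to $V(H^{(i)})$ via Lemma~\ref{lma:pushdownbarrier}, bound the ``new'' free edges with Corollary~\ref{cor:maxbarrierinear}, and telescope. The only substantive differences are packaging: the paper splits into single- and double-ear claims and obtains the sharper per-step estimate $c(G^{(i)})-c(G^{(i-1)})\leq 2+\tfrac34\ell^{(i)}-\tfrac18(\ell^{(i)})^2-\tfrac14\ell^{(i)}n(H^{(i-1)})$ before relaxing, whereas you aim directly at the weaker $2\Delta\Phi_i-\tfrac14\Delta n_i(n_i-2)$; both telescope to the same final bound.

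One remark on your ``subtle point'': the worry that the restricted family $\{B\cap V(H^{(i)}):B\in\cI\}$ might fail to be a cover set is in fact unfounded, so your singleton-padding workaround is unnecessary. If $B_1',B_2'\in\cI$ are non-conflicting in $H^{(i+1)}$ and $B_j=B_j'\cap V(H^{(i)})$, then since $B_2'$ is a barrier of $H^{(i+1)}$, case~(1) of Lemma~\ref{lma:evolutionofbarriersEVEN} forces each augmented ear to lie within a single component of $H^{(i)}-B_2$; hence the components of $H^{(i+1)}-B_2'$ restrict on $V(H^{(i)})$ exactly to the components of $H^{(i)}-B_2$, and $B_1'\subseteq$ one component of $H^{(i+1)}-B_2'$ implies $B_1\subseteq$ one component of $H^{(i)}-B_2$. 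The paper asserts this restriction is a cover set without comment, so your caution is reasonable, but the verification is short.
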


\begin{proof}
	Let
	\[ H = H^{(0)} \subset H^{(1)} \subset \cdots \subset H^{(k-1)} \subset H^{(k)} = H'\]
	be a non-refinable graded ear decomposition 
		as in Theorem \ref{thm:lovasztwoears}.
	For each $i \in \{0,1,\dots, k\}$, let $G^{(i)} \in \cE(H^{(i)})$ be of maximum size.
	Without loss of generality, assume $G^{(0)} = G$ and $G^{(k)} = G'$.
	The following claims bound the excess $c(G^{(i)})$ in terms of $c(G^{(i-1)})$
		using the ear augmentation $H^{(i-1)} \subset H^{(i)}$.
	
\begin{claim}\label{clm:singleear}
	If $H^{(i-1)} \subset H^{(i)}$ is a single ear augmentation $H^{(i)} = H^{(i-1)} + \eps_1$
		where $\eps_1$ has order $\ell^{(i)}$, then
		\[c(G^{(i)}) \leq c(G^{(i-1)}) + 1 + \frac{3}{4}\ell^{(i)} - \frac{1}{8}(\ell^{(i)})^2 - \frac{1}{4}\ell^{(i)}n(H^{({i-1})}).\]
\end{claim}	
	
	
	By Lemma \ref{lma:oneartype},
		$\eps_1$ spans two maximal barriers $X, Y \in \cP(H^{(i)})$.
	$H^{(i)}$ has $\ell^{(i)}+1$ more extendable edges than $H^{(i-1)}$.
	
	We now bound the number of free edges $G^{(i)}$ has
		compared to the number of free edges in $G^{(i-1)}$.
	The elementary supergraph $G^{(i)}$ 
		has a clique partition of free edges
		given by a maximal cover set $\cI$ in $\cC(H^{(i)})$.
	For each barrier $B \in \cI$, the set $B \cap V(H^{(i-1)})$
		is also a barrier of $H^{(i-1)}$, by Lemma \ref{lma:pushdownbarrier}.
	Through this transformation, the maximal cover set $\cI$ 
		admits an cover set $\cI' = \{ B \cap V(H^{(i-1)}) : B \in \cI\}$
		in $\cC(H^{(i-1)})$.
	This cover set $\cI'$ generates an elementary supergraph $G^{(i-1)}_* \in \cE(H^{(i-1)})$ 
		through the bijection in Claim \ref{clm:coverbijection}.
	The free edges in $G^{(i)}$ 
		which span endpoints within $V(H^{(i-1)})$
		are exactly the free edges of $G_*^{(i-1)}$.
	By the selection of $G^{(i-1)}$, $e(G_*^{(i-1)}) \leq e(G^{(i-1)})$.
	
	When $\ell^{(i)}  > 0$,
		the $\ell^{(i)}$ internal vertices of $\eps_1$ may be incident to
		free edges whose other endpoints
		lie in the barriers $X$ and $Y$.
	By Corollary \ref{cor:maxbarrierinear},
		at most half of the vertices in $\eps_1$
		have free edges to vertices in $X$ and $Y$.
	Since the barriers $X$ and $Y$ are in $H^{(i-1)}$, they have size at most $\frac{n(H^{(i-1)})}{2}$.
	So, there are at most $\frac{\ell^{(i)}}{2}\frac{n(H^{(i-1)})}{2}$	
		free edges between these internal vertices and 
		the rest of the graph.
	Also, there are at most ${\ell^{(i)}/2\choose 2} = \frac{1}{8}(\ell^{(i)})^2 - \frac{1}{4}\ell^{(i)}$
		free edges between the internal vertices themselves.
	Combining these edge counts leads to the following inequalities:
		\begin{align*}
			c(G^{(i)}) 
				&= e(G^{(i)}) - \frac{(n(H^{(i-1)})+\ell^{(i)})^2}{4}\\
				&\leq \left[e(G_*^{(i-1)})+\left(1+\ell^{(i)}\right)
					+ \frac{n(H^{(i-1)})\ell^{(i)}}{2}
					+ \frac{1}{8}(\ell^{(i)})^2 - \frac{1}{4}\ell^{(i)} \right]\\
				&\qquad\qquad	- \left[\frac{n(H^{(i-1)})^2}{4}
					+ \frac{n(H^{(i-1)})\ell^{(i)}}{2} 
					+ \frac{(\ell^{(i)})^2}{4}  \right]\\
				&\leq e(G^{(i-1)})+1+\frac{3}{4}\ell^{(i)} - \frac{n(H^{(i-1)})^2}{4} - \frac{1}{8}(\ell^{(i)})^2\\
				&= c(G^{(i-1)}) 
						+ 1 
						+ \frac{3}{4}\ell^{(i)} 
						- \frac{1}{8}(\ell^{(i)})^2
					    - \frac{1}{4}\ell^{(i)}n_{i-1}.
		\end{align*}
	This proves Claim \ref{clm:singleear}.
	We now investigate a similar bound for two-ear autmentations.
	
\begin{claim}\label{clm:doubleear}
	Let $H^{(i-1)} \subset H^{(i)}$ be a two-ear augmentation 
		$H^{(i)} = H^{(i-1)} + \eps_1 + \eps_2$
		where the ears $\eps_1$ and $\eps_2$ have $\ell_1^{(i)}$ and $\ell_2^{(i)}$ internal vertices, respectively.
	Set $\ell^{(i)} = \ell_1^{(i)} + \ell_2^{(i)}$.
	Then, 
		\[c(G^{(i)}) \leq c(G^{(i)}) + 2 + \frac{3}{4}\ell^{(i)}
			 - \frac{1}{8}(\ell^{(i)})^2 
			 - \frac{1}{4}\ell_1^{(i)}\ell_2^{(i)}
			 - \frac{1}{4}\ell^{(i)}n(H^{(i-1)}.\]
\end{claim}
	
	By Lemma \ref{lma:twoeartype}, 
		the first ear spans endpoints $x_1, x_2$ in a maximal barrier $X \in \cP(H^{(i)})$
		and the second ear spans endpoints $y_1, y_2$ in a different maximal barrier $Y \in \cP(H^{(i)})$.
	Note that after these augmentations, $x_1$ and $x_2$ are not in the same barrier,
		and neither are $y_1$ and $y_2$, by Lemma \ref{lma:evolutionofbarriersEVEN}.
	
	The graph $G^{(i)}$ is an elementary supergraph of $H^{(i)}$ 
		given by adding cliques of free edges 
		corresponding to a maximal cover set $\cI$ in $\cC(H^{(i)})$.
	By Lemma \ref{lma:pushdownbarrier}, 
		each barrier $B \in \cI$ generates the barrier $B \cap V(H^{(i-1)})$ in $V(H^{(i-1)})$.
	This induces a cover set $\cI' = \{ B \cap V(H^{(i-1)}) : B \in \cI\}$
		in $\cC(H^{(i-1)})$ which in turn defines
		an elementary supergraph $G^{(i-1)}_*$
		through the bijection in Claim \ref{clm:coverbijection}.
	By the choice of $G^{(i-1)}$, 
		$e(G_*^{(i-1)}) \leq e(G^{(i-1)})$.
		
	Consider the number of free edges in $G^{(i)}$
		compared to the free edges in $G_*^{(i-1)}$.
	First, the number of edges between the $\ell_1^{(i)}+\ell_2^{(i)}$ new vertices 
		and the $n(H^{(i-1)})$ original vertices is 
		at most $\left(\frac{\ell_1^{(i)}}{2}+\frac{\ell_2^{(i)}}{2}\right)\frac{n(H^{(i-1)})}{2}$,
		since the additions must occur within barriers,
		at most half of the internal vertices of each ear can be used 
		(by Corollary \ref{cor:maxbarrierinear}),
		and barriers in $H^{(i-1)}$ have at most $\frac{n(H^{(i-1)})}{2}$ vertices.
	Second, consider the number of free edges within the $\ell_1^{(i)}+\ell_2^{(i)}$ vertices.
	Note that no free edges can be added between $\eps_1$ and $\eps_2$
		since the internal vertices of $\eps_1$ and $\eps_2$ are in different maximal barriers of $H^{(i)}$.
	Thus, there are at most ${\ell_1^{(i)}/2\choose 2}+{\ell_2^{(i)}/2\choose 2}$
		free edges between the internal vertices.
	Since ${\ell_1^{(i)}/2\choose 2}+{\ell_2^{(i)}/2\choose 2} %
		=\frac{1}{8}(\ell_1^{(i)}+\ell_2^{(i)})^2 %
		-\frac{1}{4}(\ell_1^{(i)}+\ell_2^{(i)}+\ell_1^{(i)}\ell_2^{(i)})$, 
		we have	
		\begin{align*}
			c(G^{(i)}) 
				&= e(G^{(i)}) - \frac{(n_{i-1})
					+\ell_1^{(i)}
					+\ell_2^{(i)})^2}{4}\\
				&\leq e(G_*^{(i-1)})
						+\left(1+\ell_1^{(i)}+1+\ell_2^{(i)}\right)
						+\frac{n(H^{(i-1)})(\ell_1^{(i)}+\ell_2^{(i)})}{4}\\
				&\qquad\qquad+\frac{1}{8}\left(\ell_1^{(i)}
							+\ell_2^{(i)}\right)^2
						-\frac{1}{4}\left(\ell_1^{(i)}
						+\ell_2^{(i)}+\ell_1^{(i)}\ell_2^{(i)}
						\right)\\
				&\qquad\qquad	- \left[\frac{n(H^{(i-1)})^2}{4}
						+\frac{n(H^{(i-1)})(\ell_1^{(i)}+\ell_2^{(i)})}{2} 
						+ \frac{(\ell_1^{(i)}+\ell_2^{(i)})^2}{4}\right]\\
				&\leq e(G^{(i-1)})
					- \frac{n(H^{(i-1)})^2}{4} 
					+ \left(2+\ell_1^{(i)}+\ell_2^{(i)}\right) \\
				&\qquad\qquad
					- \frac{1}{4}\left(\ell_1^{(i)}+\ell_2^{(i)}\right)
					- \frac{1}{8}\left(\ell_1^{(i)}
					+ \ell_2^{(i)}\right)^2
					- \frac{1}{4}\ell_1^{(i)}\ell_2^{(i)}
					- \frac{1}{4}n(H^{(i-1)})\ell^{(i)}\\
				&= c(G^{(i-1)}) + 2
				 	+ \frac{3}{4}\left(\ell^{(i)}\right) 
					- \frac{(\ell^{(i)})^2}{8} 
					- \frac{1}{4}\ell_1^{(i)}\ell_2^{(i)}
					- \frac{1}{4}n(H^{(i-1)})\ell^{(i)}.
		\end{align*}
	We have now proven Claim \ref{clm:doubleear}.
	We now combine a sequence of these bounds to show the global bound.	
	
	Since each ear augmentation forces $\Phi(H^{(i)}) > \Phi(H^{(i-1)})$, there
		are at most $p-q$ augmentations.
	Moreover, the increase in $c(G^{(i)})$ at each step 
		is bounded by $1 + \frac{3}{4}\ell^{(i)} - \frac{1}{8}(\ell^{(i)})^2 - \frac{1}{4}\ell^{(i)}n(H^{({i-1})})$ 
			in a single ear augmentation 
		and $2 + \frac{3}{4}\ell^{(i)}
			 - \frac{1}{8}(\ell^{(i)})^2 
			 - \frac{1}{4}\ell_1^{(i)}\ell_2^{(i)}
			 - \frac{1}{4}\ell^{(i)}n(H^{(i-1)})$ in a double ear augmentation.
	Independent of the number of ears,
		\[ c(G^{(i)}) - c(G^{(i-1)}) \leq 2 + \ell^{(i)} - \frac{1}{8}(\ell^{(i)})^2 - \frac{1}{4}\ell^{(i)}n(H^{(i-1)}).\]
	Note also that if $\ell^{(i)}$ is positive, then it is at least two.
	Combining those inequalities gives
		\begin{align*}
			c(G') &\leq c(G) + \sum_{i=1}^k \left(
					 2
					 + \frac{3}{4}\ell^{(i)}
					 - \frac{1}{8}(\ell^{(i)})^2
					 - \frac{1}{4}\ell^{(i)}n(H^{(i-1)})
					\right)\\
				&\leq c(G) 
					+ \sum_{i=1}^k 2 
					+ \frac{3}{4}\sum_{i=1}^k \ell^{(i)}
					- \frac{1}{8}\sum_{i=1}^k (\ell^{(i)})^2
					- \frac{1}{4}\sum_{i=1}^k \ell^{(i)}n(H^{(i-1)})\\
				&\leq c(G) 
					+ 2k 
					+ \frac{3}{4}(N-n) 
					- \frac{1}{8}\sum_{i=1}^k 2\ell^{(i)}
					- \frac{1}{4}\sum_{i=1}^k \ell^{(i)}n\\
				&\leq c(G) + 2(p-q) - \frac{1}{4}(N-n)(n-2).
		\end{align*}
	This proves the result.
\end{proof}	

\begin{corollary}\label{cor:pruningcondition}
	Let $p, c \geq 1$ be integers.
	If $H$ is a 1-extendable graph with 
		$q = \Phi(H)$,
		$c'$
		is the maximum excess $c(G)$ over all graphs $G \in \cE(H)$,
		and $c' + 2(p-q) < c$, 
		then there is no 1-extendable graph $H \subset H'$ 
		reachable from $H$ by a graded ear decomposition
		so that $\Phi(H') = p$
		and there exits a graph $G' \in \cE(H')$ with excess $c(G') \geq c$.
\end{corollary}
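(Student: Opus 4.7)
The plan is to obtain this corollary as an essentially immediate contrapositive consequence of Lemma \ref{lma:pruning}, which already does the heavy lifting by bounding the excess that can be reached along a graded ear decomposition. So I would argue by contradiction: assume $H'$ and $G' \in \cE(H')$ exist with $\Phi(H') = p$, $c(G') \geq c$, and $H \subset H'$ via a graded ear decomposition, and derive a numerical inequality inconsistent with the hypothesis $c' + 2(p-q) < c$.

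First, I would set $n = n(H)$ and $N = n(H')$. I would let $G \in \cE(H)$ achieve the maximum excess $c' = c(G)$ and apply Lemma \ref{lma:pruning} (with the chosen $G'$, which by definition of $\cE(H')$ has maximum number of edges — or, if $G'$ is not maximal, one can pass to a maximal supergraph in $\cE(H')$, which has at least as many edges and hence at least as much excess). This gives the key inequality
\[
c(G') \;\leq\; c' + 2(p-q) - \tfrac{1}{4}(N-n)(n-2).
\]

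Next, I would verify that the correction term is non-negative. Since $H'$ is reached from $H$ by ear augmentations, $N \geq n$, so $N - n \geq 0$. Moreover, $H$ is 1-extendable with $q \geq 1$ perfect matching, so $n \geq 2$ and hence $n - 2 \geq 0$. Therefore $(N-n)(n-2) \geq 0$, and the bound simplifies to $c(G') \leq c' + 2(p-q)$. Combined with the hypothesis $c' + 2(p-q) < c$, this yields $c(G') < c$, contradicting $c(G') \geq c$.

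I do not anticipate any serious obstacle in this argument: the only mildly delicate point is ensuring the lemma applies to a \emph{maximum} $G'$ in $\cE(H')$ and that $n \geq 2$, both of which are immediate. The entire content of the corollary is repackaging Lemma \ref{lma:pruning} into the pruning criterion that will actually be checked in the search algorithm.
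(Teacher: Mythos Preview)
Your proposal is correct and matches the paper's intended approach: the corollary is stated in the paper without proof, as it follows immediately from Lemma~\ref{lma:pruning} by dropping the non-negative term $\tfrac{1}{4}(N-n)(n-2)$, exactly as you argue. Your handling of the two minor points (passing to a maximum $G'$ in $\cE(H')$ and checking $n\geq 2$, $N\geq n$) is appropriate and no further argument is needed.
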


Corollary \ref{cor:pruningcondition} gives 
	the condition  
	to test if we can prune the current node, 
	since there does not exist a sequence of ear augmentations
	that lead to a graph with excess at least our known lower bound on $c_p$.
Moreover, Lemma \ref{lma:pruning} 
	provides a dynamic bound on the number $N$  of vertices 
	that can be added to the current
	graph while maintaining the possibility of finding a graph with excess at least the
	known lower bound on $c_p$,
	by selecting $N$ to be maximum so that $c' + 2(p-\Phi(H)) - \frac{1}{4}(N-n)(n-2) \geq c$.

\section{Results and Data}
\label{ssec:results}

The full algorithm to search for $p$-extremal
	elementary graphs
	combines three types of algorithms.
First, the canonical deletion from Section \ref{ssec:pmdeletion}
	is used to enumerate the search space 
	with no duplication of isomorphism classes.
Second, the pruning procedure from Section \ref{ssec:pruning}
	greatly reduces the number of generated graphs 
	by backtracking when no dense graphs are reachable.
Third, Section \ref{ssec:barriers}	
	provided a method for adding free edges to 
	a 1-extendable graph with $p$ perfect matchings 
	to find maximal elementary supergraphs.
	
The recursive generation algorithm Search$(H^{(i)},N,p,c)$ 
	is given in Algorithm \ref{alg:recursivesearch}.
Given a previously computed lower bound $c \leq c_p$,
	the full search Generate$(p,c)$ (Algorithm \ref{alg:fullsearch})
	operates
	by running Search$(C_{2k}, N_p, p, c)$
	for each even cycle $C_{2k}$ with $4 \leq 2k \leq N_p$.
All elementary graphs $G$
	with $\Phi(G) = p$ and $c(G) \geq c$ 
	are generated by this process.

\begin{algorithm}[p]
	\caption{\label{alg:recursivesearch}Search$(H^{(i)},N^{(i)},p,c)$}
	\begin{algorithmic}
		\STATE \emph{Check all pairs of vertices, up to symmetries}
		\FOR{all vertex-pair orbits $\mathcal{O}$ in $H^{(i)}$}
			\STATE $\{x,y\} \leftarrow$ representative pair of $\mathcal{O}$
			\STATE \emph{Augment by ears of all even orders}
			\FOR{all orders $r \in \{0,2,\dots,N^{(i)}-n(H^{(i)})\}$}
				\STATE $H^{(i+1)} \leftarrow H^{(i)} + \operatorname{Ear}(x,y,r)$
				\IF{$H^{(i)}$ is almost 1-extendable and $H^{(i+1)}$ is not 1-extendable}
					\STATE{\emph{Skip $H^{(i+1)}$ (decomposition is not graded).}}
				\ELSIF{$\Phi(H^{(i+1)}) > p$}
					\STATE \emph{Skip $H^{(i+1)}$.}
				\ELSE
					\STATE{\emph{Check the canonical deletion}}
					\STATE $(x',y',r') \leftarrow \del(H^{(i+1)})$
					\IF{$r = r'$ {\bf and} $\{x',y'\} \in \mathcal{O}$}
						\STATE{\emph{This augmentation matches the canonical deletion}}
						\STATE $n^{(i+1)} \leftarrow n(H^{(i+1)})$.
						\STATE $p^{(i+1)} \leftarrow \Phi(H^{(i+1)})$.
						\STATE $c^{(i+1)} \leftarrow \max\{c\left(H^{(i+1)}_{\cI}\right) : \cI \in \cC(H^{(i+1)})\}$.
						\IF{$p^{(i+1)} = p$ and $c^{(i+1)} \geq c$}
							\STATE \emph{There are solutions within $\cE(H^{(i+1)})$.}
							\FOR{all cover sets $\cI \in \cC(H^{(i+1)})$}
								\IF{$c\left(H_{\cI}^{(i+1)}\right) \geq c$}
									\STATE Output $H_{\cI}$.
								\ENDIF
							\ENDFOR
						\ELSIF{$q < p$ {\bf and} $c^{(i+1)} + 2(p-p^{(i+1)}) \geq c$}
							\STATE \emph{Use Lemma \ref{lma:pruning} to bound the number of vertices for future augmentations.}
						 	\STATE $N^{(i+1)} = \max\{ N' : c^{(i+1)} + 2(p-p^{(i+1)}) - \frac{1}{4}(N'-n^{(i+1)}))(n^{(i+1)}-2) \geq c\}$.
							\STATE Search$(H^{(i+1)}, N^{(i+1)}, p, c)$.
						\ENDIF
					\ENDIF
				\ENDIF
			\ENDFOR
		\ENDFOR
		\RETURN
	\end{algorithmic}
\end{algorithm}

\begin{algorithm}[p]
	\caption{\label{alg:fullsearch}Generate$(p,c)$}
	\begin{algorithmic}
		\STATE $N \leftarrow \max\{2r : 2r \leq 3+\sqrt{16p-8c-23}\}$.
		\FOR{$r \in \{1,\dots,N/2\}$}
			\STATE Search$(C_{2r},N,p,c)$
		\ENDFOR
		\RETURN
	\end{algorithmic}
\end{algorithm}

\begin{theorem}\label{thm:correctness}
	Given $p$ and $c \leq c_p$, 
		Generate($p,c$) (Algorithm \ref{alg:fullsearch})
		outputs all unlabeled elementary graphs with $p$
		perfect matchings and excess at least $c$.
\end{theorem}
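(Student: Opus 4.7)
The plan is to fix an arbitrary elementary graph $G$ with $\Phi(G)=p$ and $c(G)\geq c$ and show that some execution path of \textsc{Generate}$(p,c)$ outputs an isomorphic copy of $G$. Let $H$ be the extendable subgraph of $G$. Since $G$ is elementary, $H$ is 1-extendable with $\Phi(H)=\Phi(G)=p$, and Theorem~\ref{thm:sizebound} (applied with the excess $c(G)\geq c$ in place of $c_p$) gives $n(H)=n(G)\leq N$, where $N$ is the initial bound computed in \textsc{Generate}. By Corollary~\ref{cor:twoears}, $H$ admits a non-refinable graded ear decomposition $H_{-k}\subset H_{-(k-1)}\subset\cdots\subset H_{-1}\subset H_0=H$ in which $H_{-k}\cong C_{2\ell}$ for some $\ell\geq 2$, each intermediate graph lies in $\cM^p$, and each augmentation uses either one or two even-order ears. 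One then fixes \emph{the} canonical ear decomposition of $H$, as defined by iterating $\del$; by the definition of $\del$ in Section~\ref{ssec:canondelete} this decomposition also satisfies the requirements of Corollary~\ref{cor:twoears}.

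Next I would argue by induction on $k$ that the canonical decomposition is exactly the sequence of states traversed by some branch of \textsc{Search}, starting from \textsc{Search}$(H_{-k},N,p,c)$ (which is invoked at the top level of \textsc{Generate} because $2\ell\leq n(H)\leq N$). The base case is trivial since $H_{-k}$ is itself the starting cycle. For the inductive step, suppose the branch has reached $H_{-j}$ for some $j>0$. The augmentation $H_{-j}\subset H_{-j+1}$ is the ear $\eps=\del(H_{-j+1})$, and the canonical-deletion test in \textsc{Search} accepts precisely those augmentations $(x,y,r)$ whose resulting graph has $\del$ equivalent to $(x,y,r)$ under $\lab$; this is the standard correctness statement for the isomorph-free generation scheme of~\cite{Stolee11}, which I would invoke as a black box. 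Consequently, the iteration that chooses the vertex-pair orbit containing the endpoints of $\eps$ and order $r$ equal to that of $\eps$ passes the test, and the recursive call \textsc{Search}$(H_{-j+1},N^{(i+1)},p,c)$ is made.

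The main thing to verify is that none of the pruning criteria abort this branch before $H$ is reached. At each intermediate $H^{(i)}=H_{-j+1}$ we have $\Phi(H^{(i)})\leq p$ (strict inequality of $\Phi$ holds along a graded decomposition, and the final value is $p$), so the ``$\Phi>p$'' cutoff is never triggered. For the excess-based pruning, note that $G\in\cE(H)$ is reachable from each $H^{(i)}$ by the remaining portion of the graded decomposition together with the edge-filling of cover sets; since $G$ has excess at least $c$, Lemma~\ref{lma:pruning} applied to the maximum-excess supergraph $G^{(i)}\in\cE(H^{(i)})$ gives $c\leq c(G)\leq c(G^{(i)})+2(p-\Phi(H^{(i)}))-\tfrac14(n(H)-n(H^{(i)}))(n(H^{(i)})-2)$, so the computed upper bound $c^{(i+1)}+2(p-p^{(i+1)})$ is at least $c$ and the associated vertex bound $N^{(i+1)}$ is at least $n(H)$. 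Hence the recursion continues through $H$.

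Finally, when the branch reaches $H_0=H$, the test $p^{(i+1)}=p$ and $c^{(i+1)}\geq c$ succeed, and the loop over cover sets $\cI\in\cC(H)$ is executed. By Theorem~\ref{thm:barrierconflictgraph}, the supergraph $G$ corresponds under the bijection of Claim~\ref{clm:coverbijection} to a (maximal) cover set $\cI_G\in\cC(H)$, so $G=H_{\cI_G}$ is produced by the inner loop when $\cI=\cI_G$. This establishes that every isomorphism class of interest is output. The main obstacle here is the appeal to the canonical-deletion correctness result of~\cite{Stolee11}; all other pieces—the size bound from Theorem~\ref{thm:sizebound}, the existence of the graded decomposition from Corollary~\ref{cor:twoears}, and the reconstruction of $G$ from a cover set via Theorem~\ref{thm:barrierconflictgraph}—have been assembled in earlier sections, and the pruning check is safe by Corollary~\ref{cor:pruningcondition}.
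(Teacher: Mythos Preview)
Your proposal is correct and follows essentially the same approach as the paper's own proof: fix $G$, take the canonical ear decomposition of its extendable subgraph $H$, argue that \textsc{Search} traces exactly this decomposition because the canonical-deletion test accepts each step, invoke Lemma~\ref{lma:pruning} to show the excess-based pruning and vertex bound $N^{(i+1)}$ never cut off the branch, and finally recover $G$ from a cover set via Theorem~\ref{thm:barrierconflictgraph}. Your treatment is in fact slightly more explicit about the inductive structure and about checking each pruning criterion separately, but the logical skeleton is identical to the paper's argument.
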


\begin{proof}
	Given an unlabeled graph $G$ with $\Phi(G) = p$ and $c(G) \geq c$,
		note that Theorem \ref{thm:sizebound}
		implies $n(G) \leq 3 + \sqrt{16p - 8c - 23}$.
	With respect to the canonical deletion $\del(H)$,
		let 
		$ H^{(0)} \subset H^{(1)} \subset \cdots \subset H^{(k)}$
		be the canonical ear decomposition of the extendable subgraph $H$ in $G$.
	By the choice of canonical deletion, this decomposition takes the form
		of Corollary \ref{cor:twoears}.
	Moreover, $H^{(0)}$ is an even cycle $C_{2r}$ for some $r$.
	The Generate$(p,c)$ method initializes Search$(C_{2r}, N, p, c)$.
	
	By the definition of canonical ear decomposition, 
		the canonical ear $\eps^{(i)} = \del(H^{(i)})$ of $H^{(i)}$
		is the ear used to augment from $H^{(i-1)}$ to $H^{(i)}$.
	Let $x^{(i)}, y^{(i)}$ be the endpoints of $\eps^{(i)}$.
	When Search($H^{(i)},N^{(i)},p,c$) is called,
		the pair orbit ${\mathcal O}$ containing $\{x^{(i+1)}, y^{(i+1)}\}$
		is visited and an ear $\eps$ of the same order as $\eps^{(i+1)}$
		is augmented to $H^{(i)}$ to form 
		a graph $H_*^{(i+1)}$.
	Note that $H_*^{(i+1)} \cong H^{(i+1)}$
		with an isomorphism mapping $\eps$ to $\eps^{(i+1)}$.
	By the definition of the canonical deletion $\del(H)$,
		the algorithm accepts this augmentation.
		
	For each $i$, let $G^{(i)}$ be a maximum-size elementary supergraph in $\cE(H^{(i)})$.
	By Theorem \ref{thm:barrierconflictgraph}, there exists
		a maximal cover set $\cI \in \cC(H^{(i)})$
		so that $G^{(i)} = H^{(i)}_{\cI}$.
	Since $c(G^{(k)}) = c(G) \geq c$, 
		Lemma \ref{lma:pruning}
		gives $c(G) \leq c(G^{(i+1)}) + 2(p-p^{(i+1)}) - \frac{1}{4}(n(G) - n(H^{(i+1)}))(n(H^{(i+1)})-2)$,
		so the algorithm recurses with $N^{(i+1)} \geq n(G)$.
		
	When $H^{(k)}$ is reached, the algorithm notices that $\Phi(H^{(k)}) = p$
		and enumerates all cover sets $\cI\in \cC(H^{(k)})$ 
		which generates the elementary supergraphs $H^{(k)}_{\cI} \in \cE(H^{(k)})$ 
		with excess at least $c$.
	Since $H^{(k)}$ is the extendable subgraph of $G$
		and $c(G) \geq c$, this procedure will output $G$.
\end{proof}

The framework for this search
	was implemented within the EarSearch library\footnote{The EarSearch library is available at \href{https://github.com/derrickstolee/TreeSearch}{https://github.com/derrickstolee/EarSearch}}.
This implementation was executed
	on the Open Science Grid~\cite{OpenScienceGrid}
	using the University of Nebraska Campus Grid~\cite{WeitzelMS}.
The nodes available on the University of Nebraska Campus Grid
	consist of Xeon and Opteron processors with 
	a speed range of between 2.0 and 2.8 GHz.

Combining this algorithm with known lower bounds on $c_p$
	for $p \in \{11,\dots, 27\}$
	provided a full enumeration of $p$-extremal graphs
	for this range of $p$.
The resulting values of $c_p$ and $n_p$ are
	given in Table \ref{tab:NewValuesCP}.
The computation time for these values ranged from less than a minute to more than 
	10 years.
Table \ref{tab:pmtiming} gives the full list of computation times.
The resulting $p$-extremal elementary graphs for $11 \leq p \leq 27$ 
	are given 
	in Figure \ref{fig:ExtremalElemGraphs}.

\begin{table}[t]
	\centering	
	\begin{tabular}[h]{|r|c|c|c|c|c|c|c|c|c|c|c|c|c|c|c|c|c|}
	\hline
		$p$ & 11 & 12 & 13 & 14 & 15 & 16 & 17 & 18 & 19 & 20 & 21 & 22 & 23 & 24 & 25 & 26 & 27 \\
		\hline
		$n_p$ & 8 & 6 & 8 & 8 & 6 & 8 & 8 & 8 & 8 & 8 & 8 & 8 & 8 & 8 & 8 & 8 &8 \\
		$c_p$ & {3} & 5 & {3} & {4} & 6 & {4} & {4} & {5} & {4} & {5} & {5} & {5} & 5 & 6 & 5 & 5 & 6  \\
		\hline
		$C_p$ &  4& 5& 5& 5 & 6& 6& 6& 6& 6& 6 & 6 & 6 & 6 & 6 & 6& 6 & 6\\
		\hline
 	\end{tabular}
	\caption{\label{tab:NewValuesCP}New values of $n_p$ and $c_p$.
		Conjecture \ref{conj:starremoval} states that $c_p \leq C_p$.}
\end{table}

\begin{table}[t]
	\centering
	\begin{tabular}{c|c|c|r@{ }r@{ }r@{.}l}
		$p$ & $N_p$ & $c_p$
				& \multicolumn{4}{c}{Total CPU Time} \\
		\hline
		 $5$ &  8 & 2     &   &    &   0&02s \\
		 $6$ & 10 & 3     &   &    &   0&04s \\
		 $7$ & 10 & 3    &   &    &   0&18s \\
		 $8$ & 12 & 3     &   &    &   0&72s \\
		 $9$ & 12 & 4    &   &    &   1&46s \\
		$10$ & 12 & 4    &   &    &  5&95s \\
		$11$ & 14 & 3      &     &      & 43&29s \\
		$12$ & 14 & 5     &     &      & 44&01s \\
		$13$ & 14 & 3  &  &  6m & 39&80s \\
		$14$ & 16 & 4  &  & 12m & 10&40s \\
		$15$ & 16 & 6  &  & 12m & 42&72s \\
	 	\multicolumn{5}{c}{}&
	\end{tabular}	
	\qquad
	\begin{tabular}{c|c|c|r@{ }r@{ }r@{ }r@{ }r@{.}l}
		$p$ & $N_p$ & $c_p$
				& \multicolumn{4}{c}{Total CPU Time} \\
		\hline
		$16$ & 16 & 4 && &  2h & 07m & 58&60s \\
		$17$ & 16 & 4 && &  6h & 46m & 07&72s \\
		$18$ & 18 & 5 && & 11h & 45m & 01&95s \\
		$19$ & 18 & 4 & &   2d & 17h & 12m & 31&85s \\
		$20$ & 18 & 5 & &   4d & 05h & 28m & 11&79s \\
		$21$ & 18 & 5 & &  13d & 17h & 29m & 12&45s \\ 
		$22$ & 20 & 5 & &  42d & 20h & 40m & 30&41s \\
		$23$ & 20 & 5 & & 118d & 07h & 38m & 36&84s \\
		$24$ & 20 & 6 & & 209d & 10h & 09m & 54&98s \\
		$25$ & 20 & 5 &  2y & 187d & 21h & 48m & 46&31s \\
		$26$ & 20 & 5 &  7y &  75d & 13h & 55m & 10&27s \\
		$27$ & 22 & 6 & 10y & 247d & 21h &  3m & 13&94s \\
	\end{tabular}
	\caption{\label{tab:pmtiming}Time analysis of the search for varying $p$ values.}
\end{table}

\def\figtabwidth{0.1\textwidth}
\begin{figure}[t]
	\begin{tabular}[h]{cccccccc}
		 \includegraphics[width=\figtabwidth]{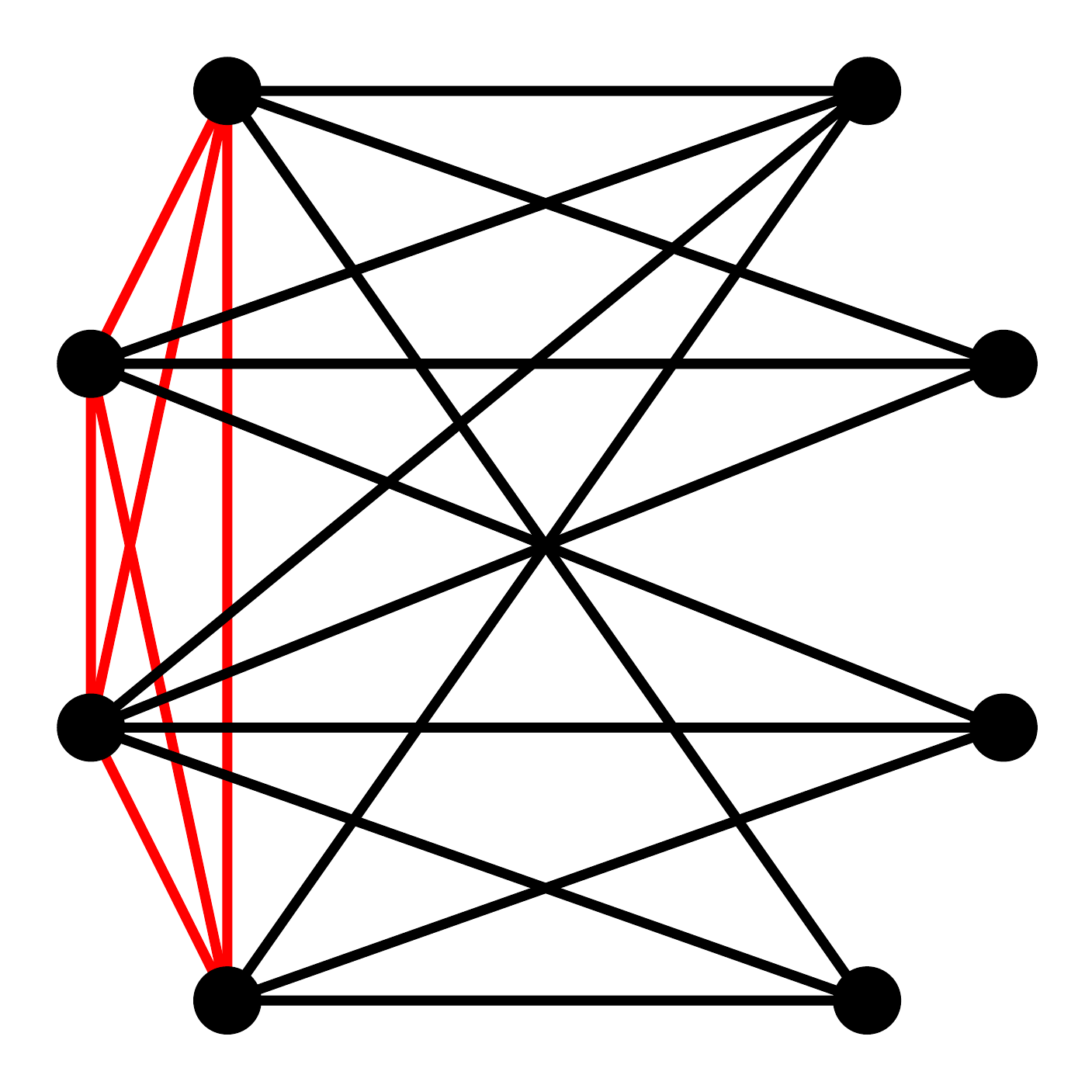}
			&  \includegraphics[width=\figtabwidth]{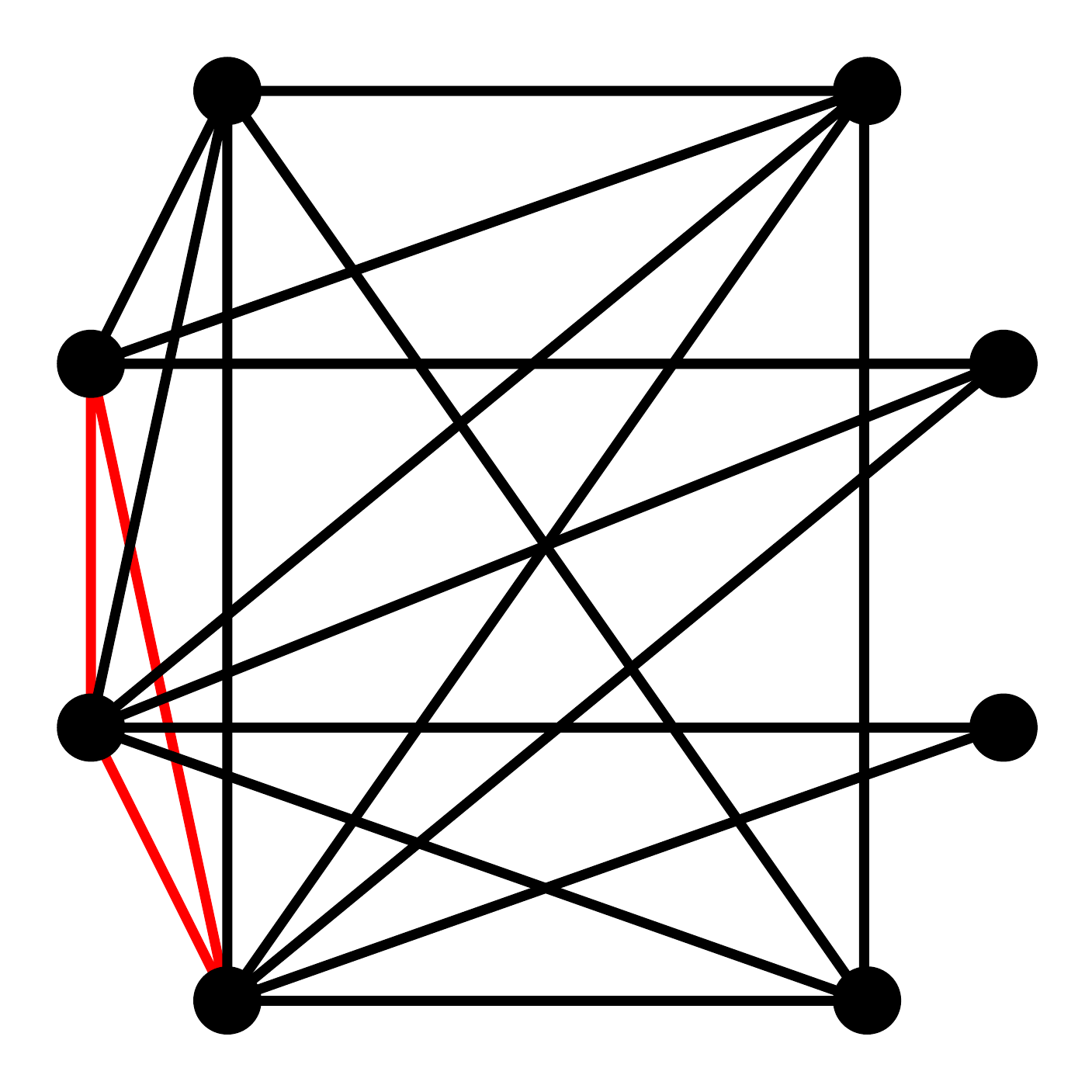}
		    &  \includegraphics[width=\figtabwidth]{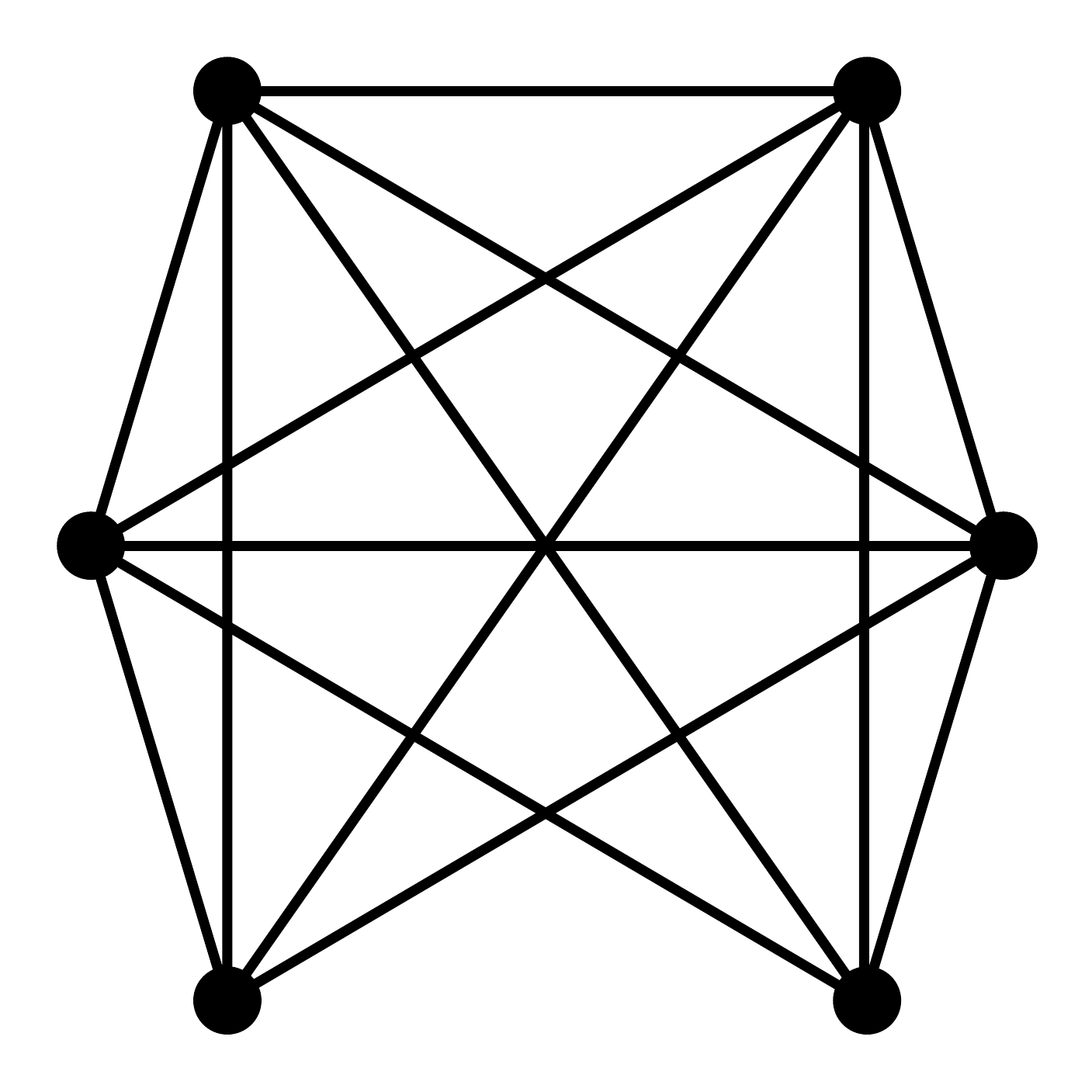}
		    & \includegraphics[width=\figtabwidth]{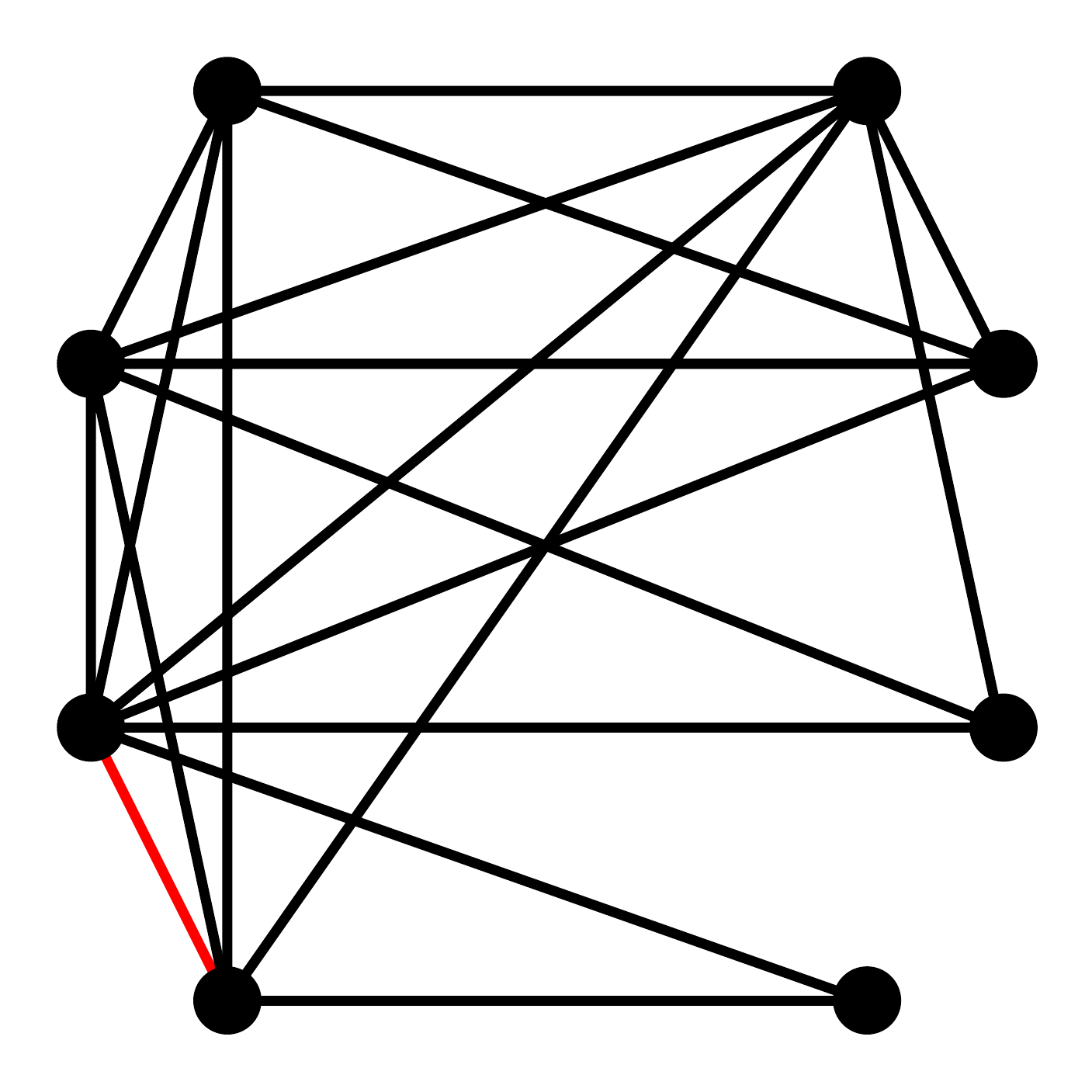}
		    & \includegraphics[width=\figtabwidth]{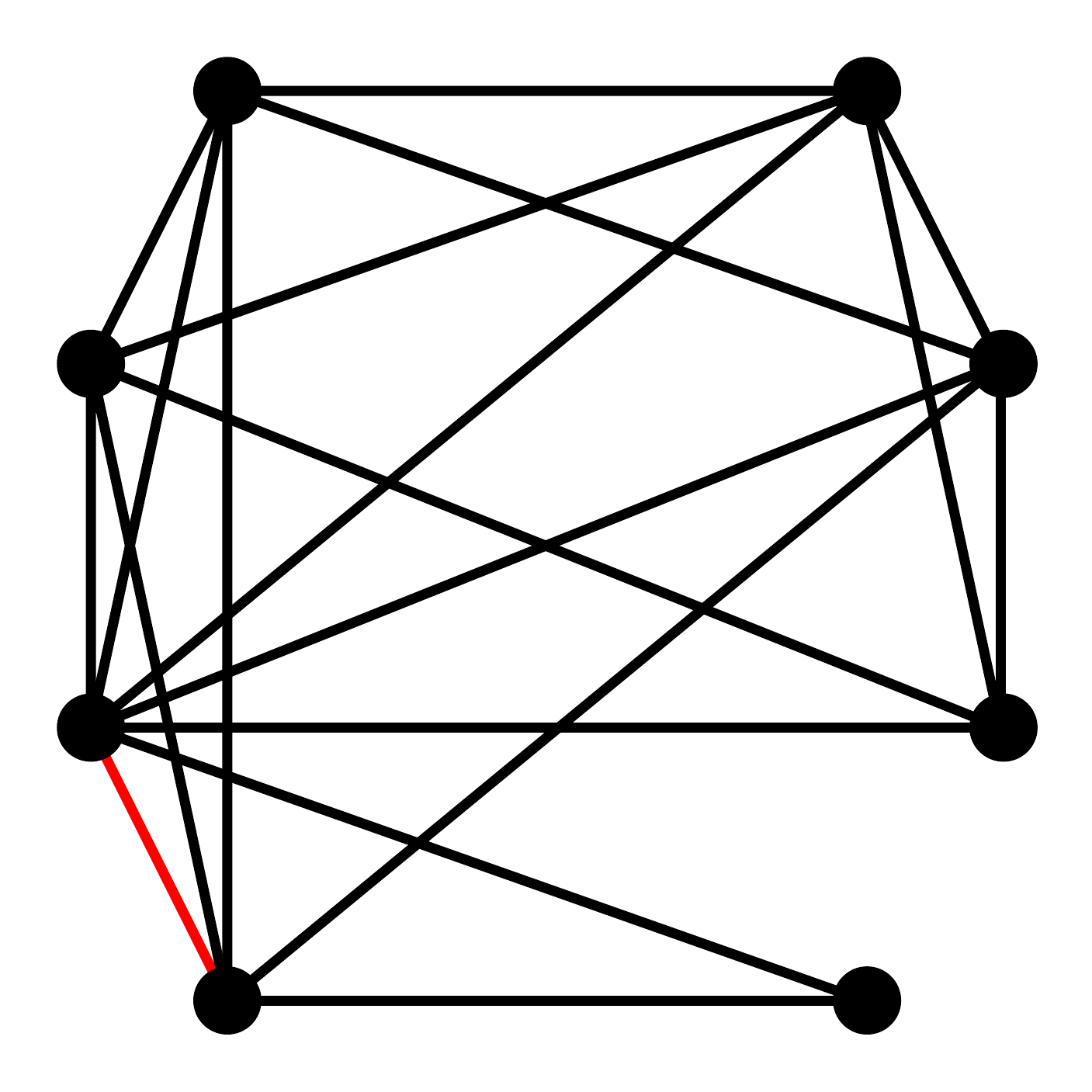}
		    & \includegraphics[width=\figtabwidth]{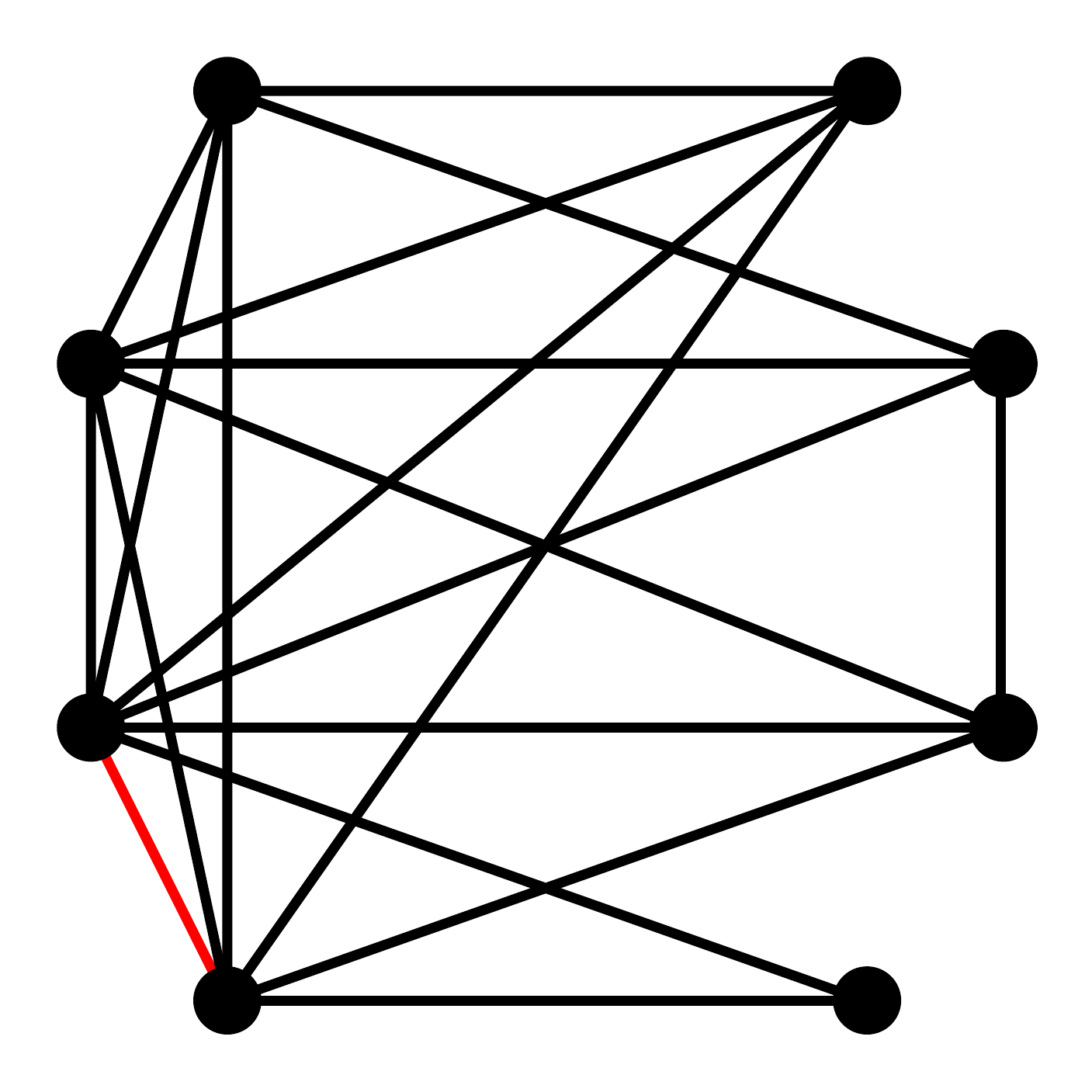}
		    & \includegraphics[width=\figtabwidth]{figs-delete/p13c3n8-3}
			& \includegraphics[width=\figtabwidth]{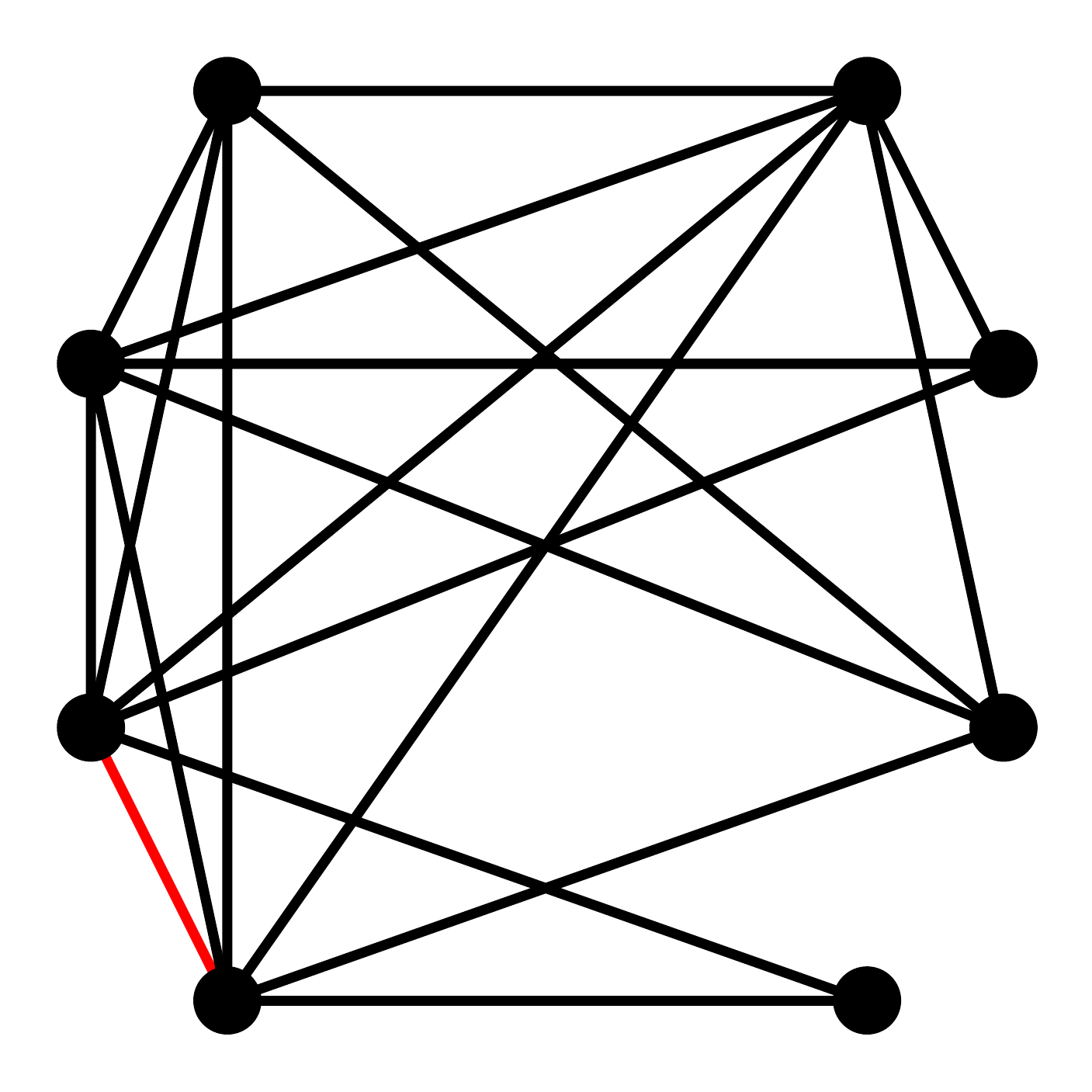}
			\\
		$p=11$
			& $p=11$
			& $p=12$
			& $p=13$
			& $p=13$
			& $p=13$
			& $p=13$
			& $p=13$
			\\
			 \includegraphics[width=\figtabwidth]{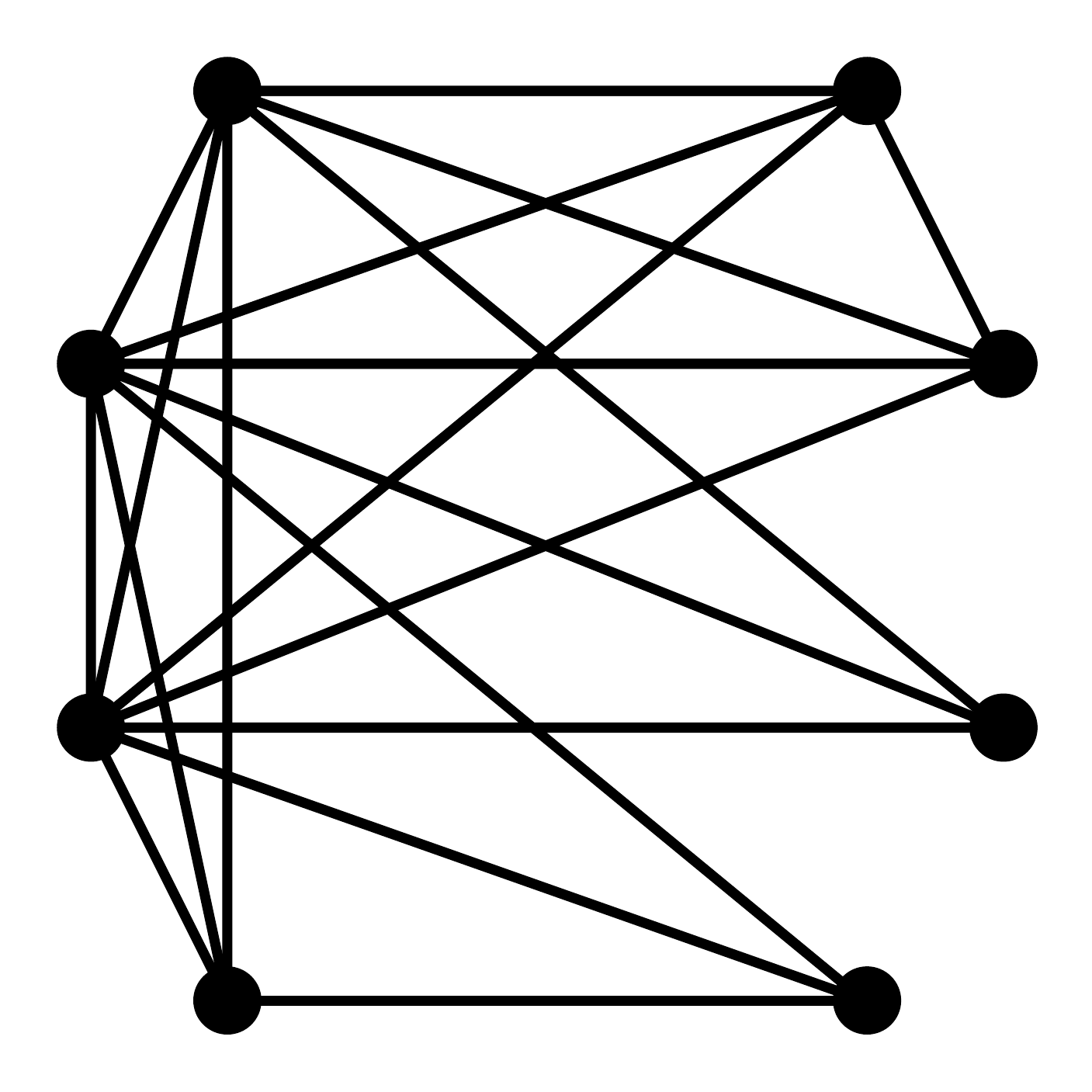}
			& \includegraphics[width=\figtabwidth]{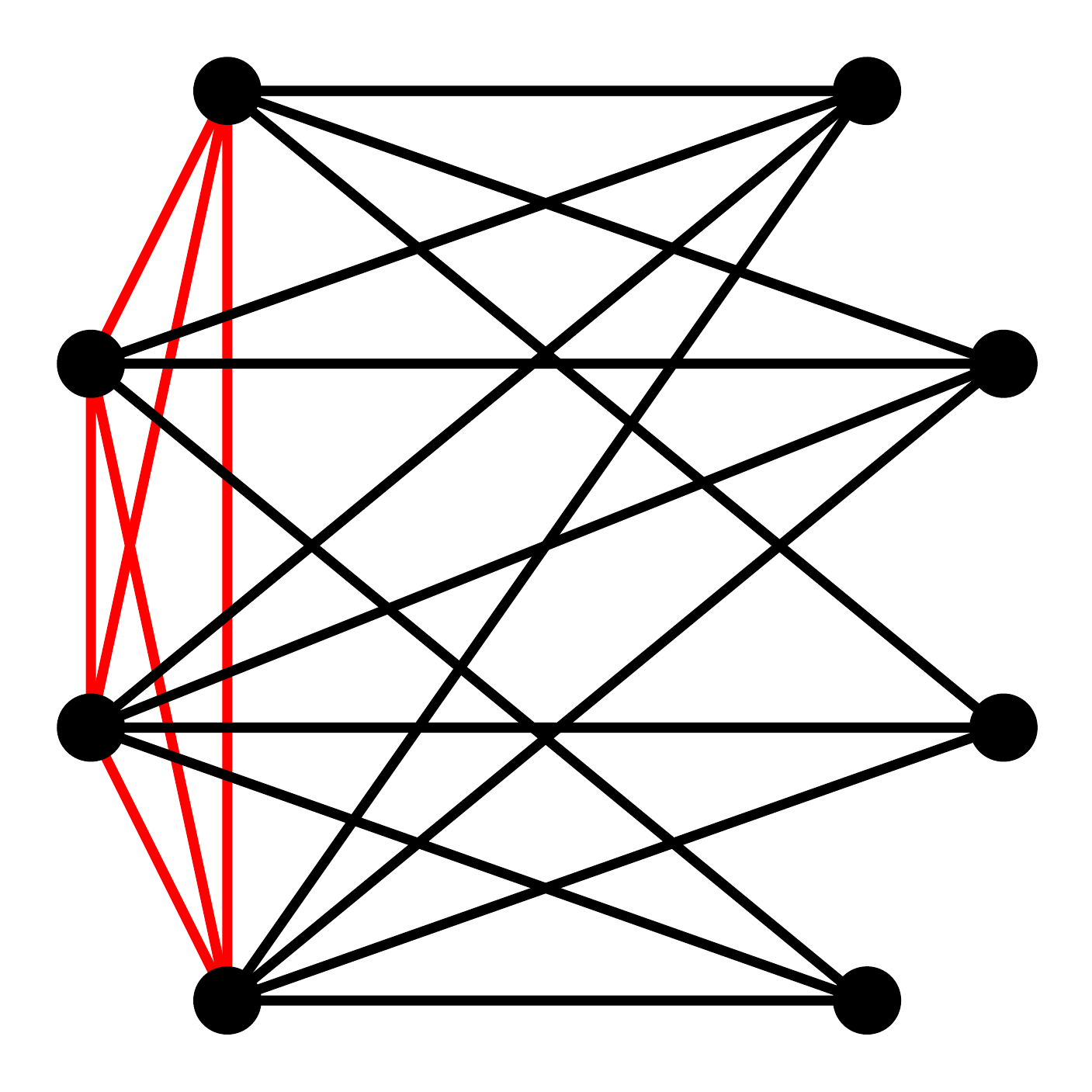}
			& \includegraphics[width=\figtabwidth]{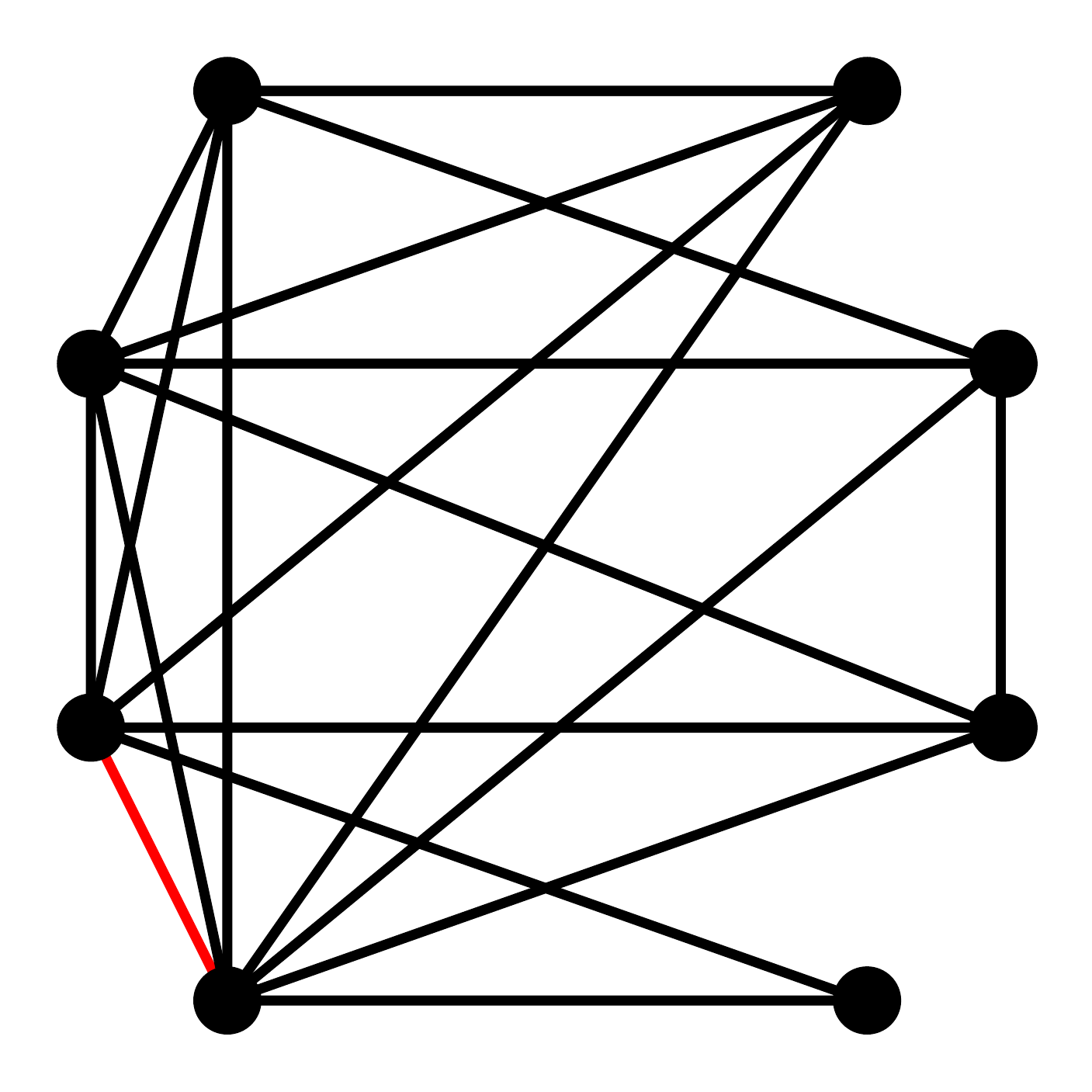}
			& \includegraphics[width=\figtabwidth]{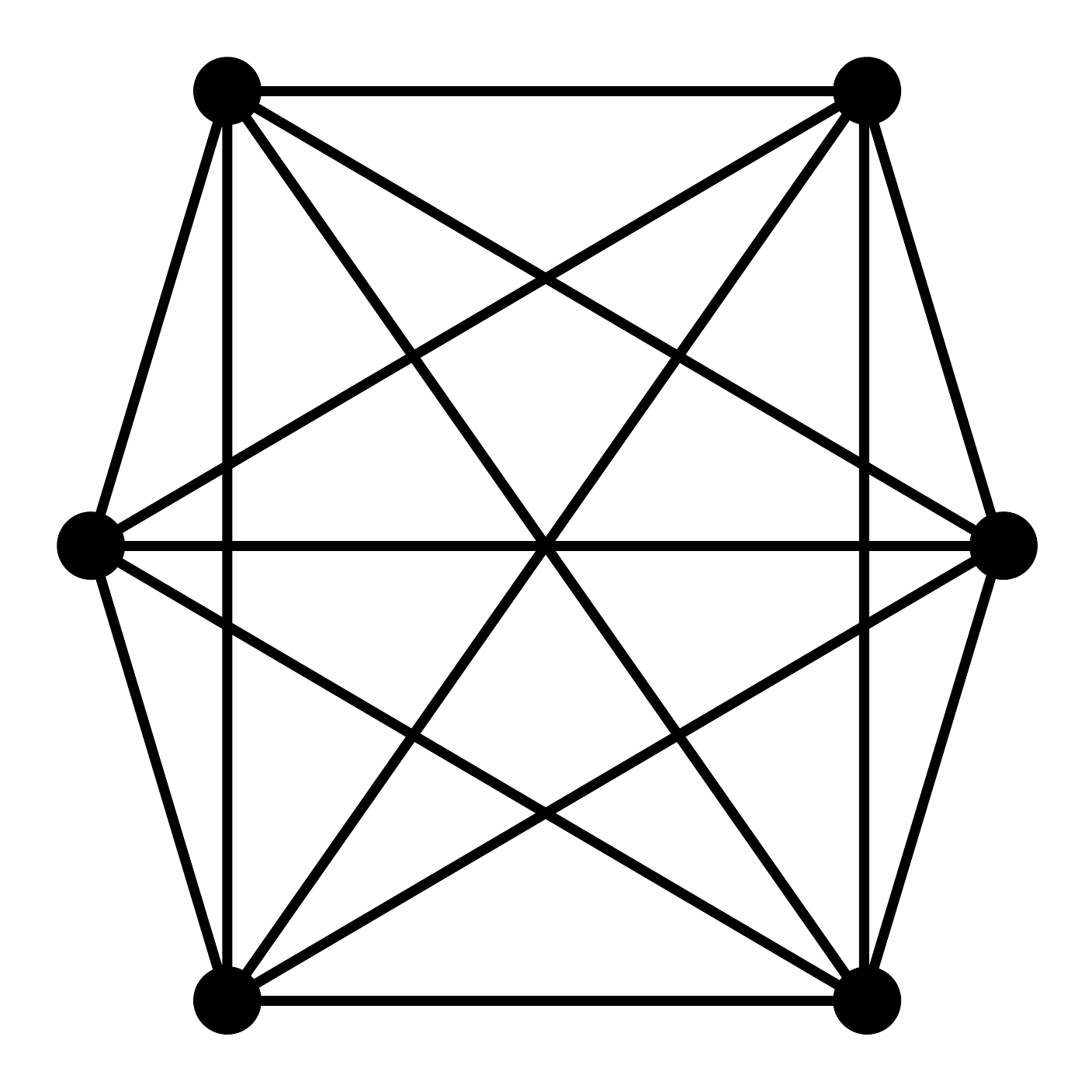}
			& \includegraphics[width= \figtabwidth]{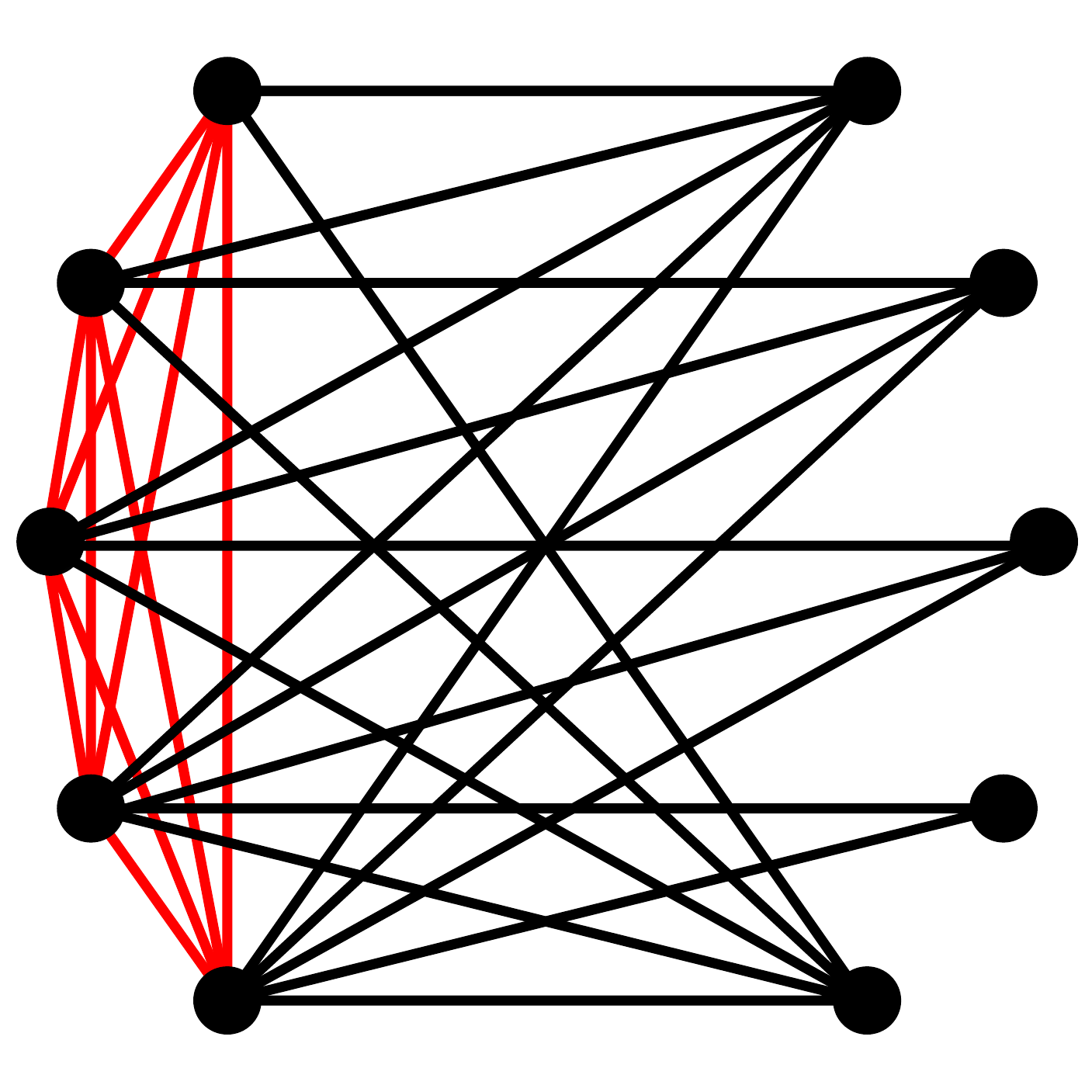}
			& \includegraphics[width= \figtabwidth]{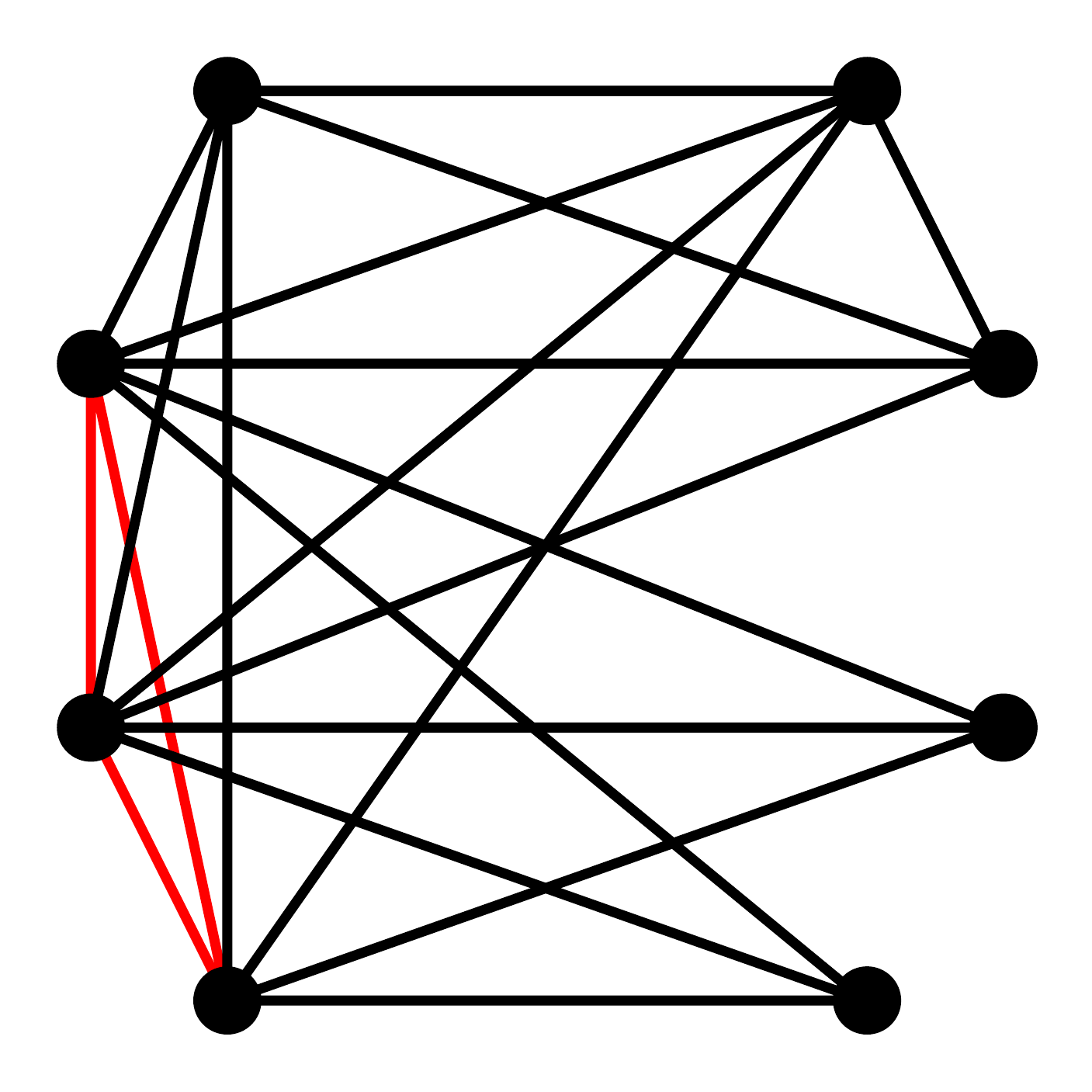}
			& \includegraphics[width= \figtabwidth]{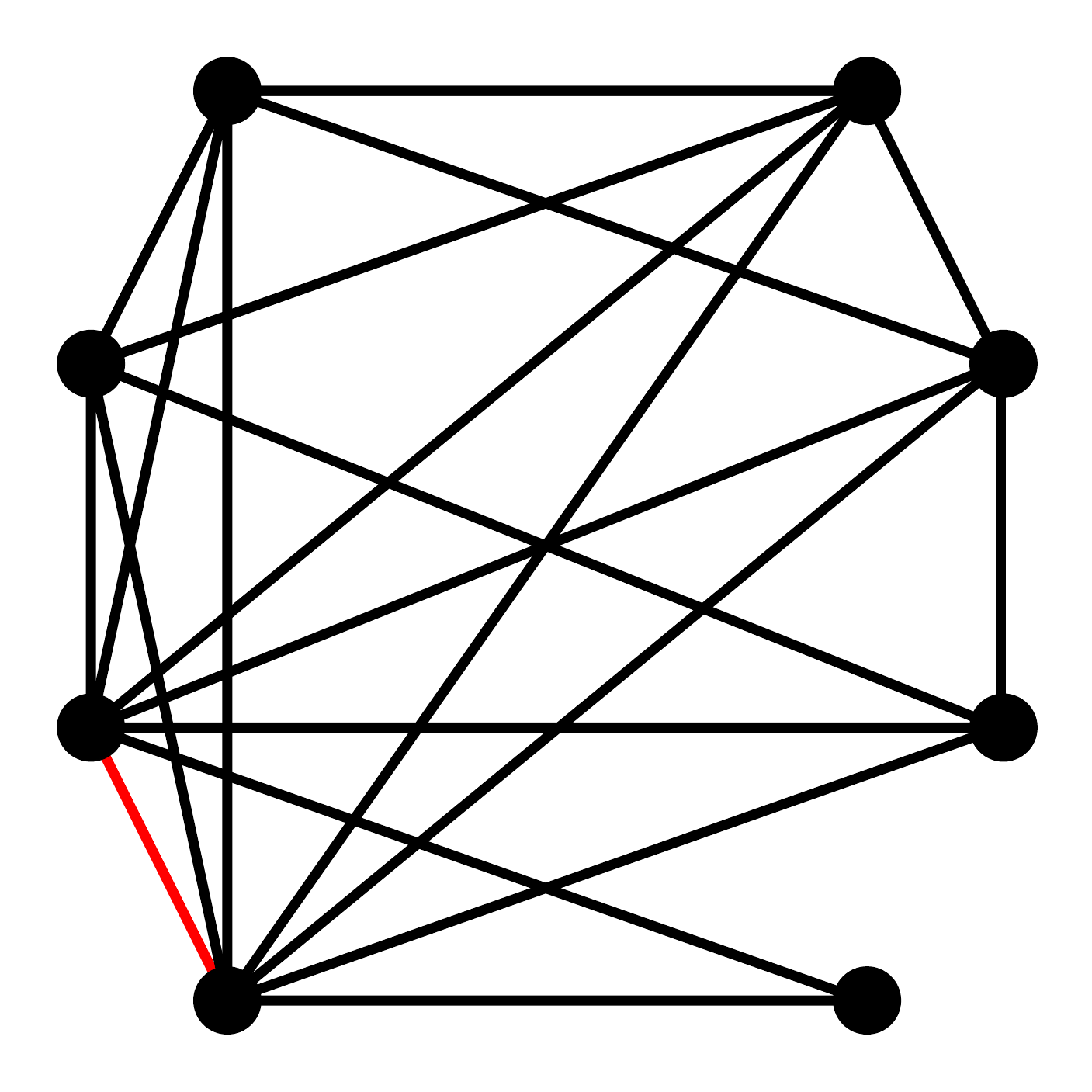}
			& \includegraphics[width= \figtabwidth]{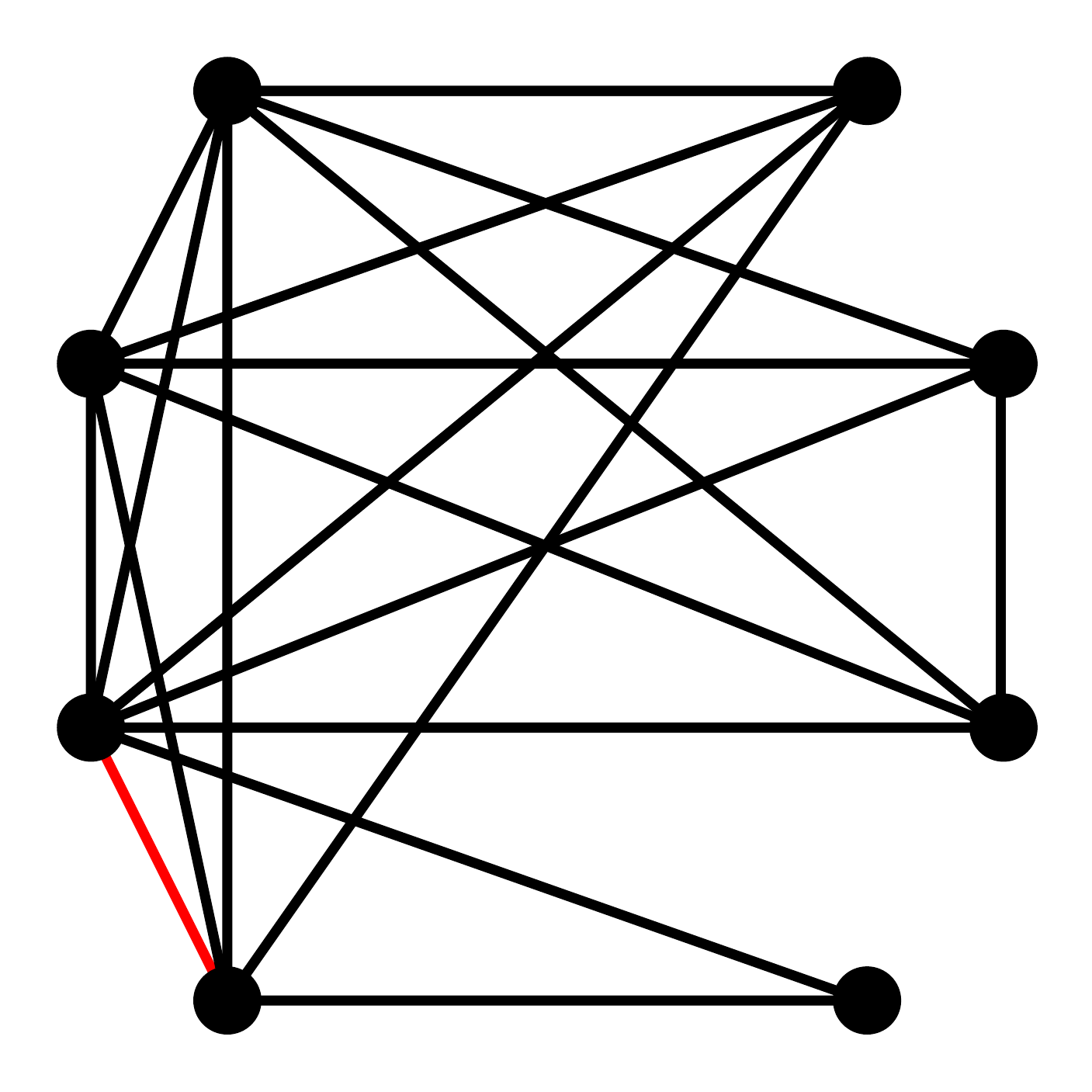}
			 \\
			$p=13$
				& $p=14$
				& $p=14$
				& $p=15$
				& $p=16$
				& $p=16$
				& $p=16$
				& $p =16$\\
				 \includegraphics[width=\figtabwidth]{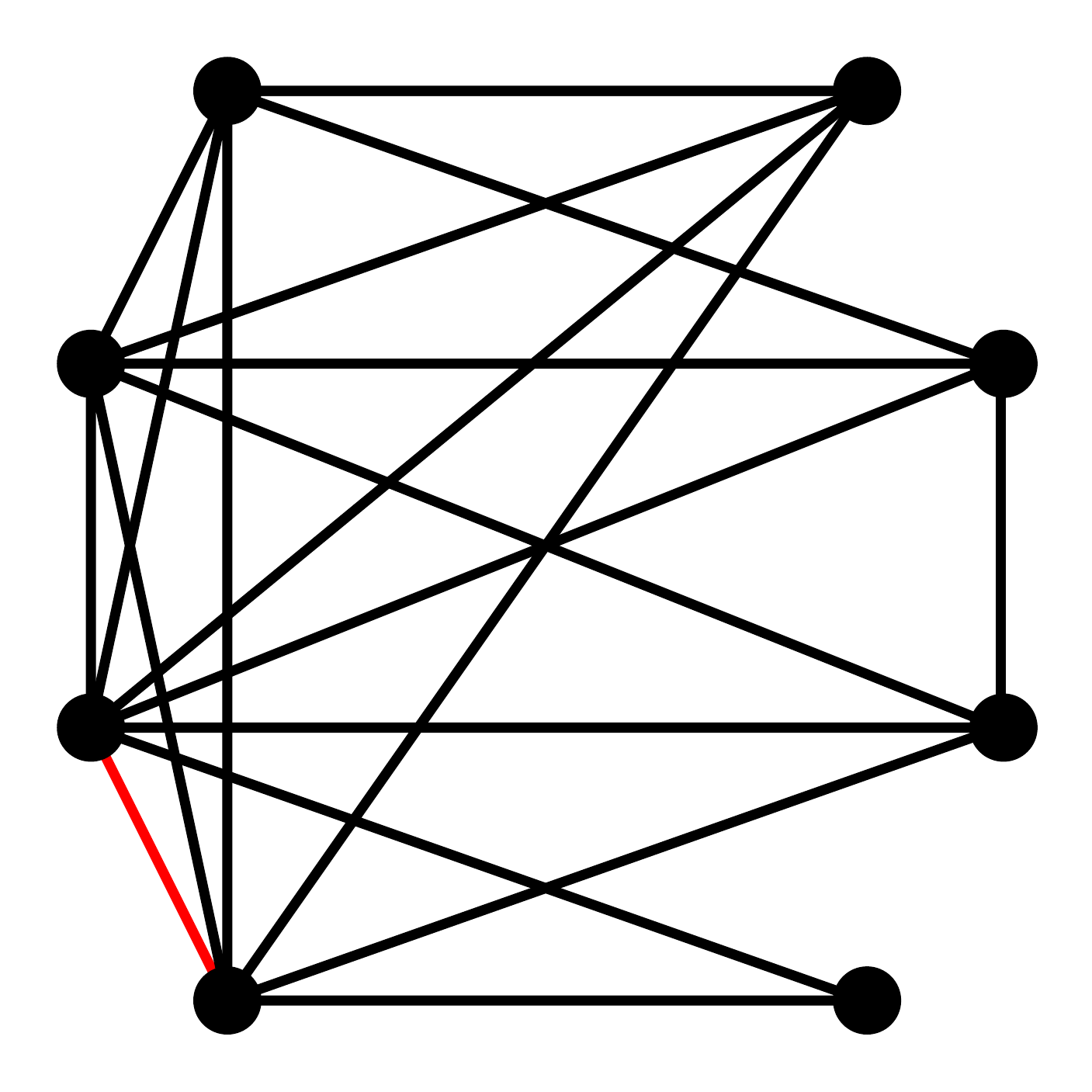}
				& \includegraphics[width=\figtabwidth]{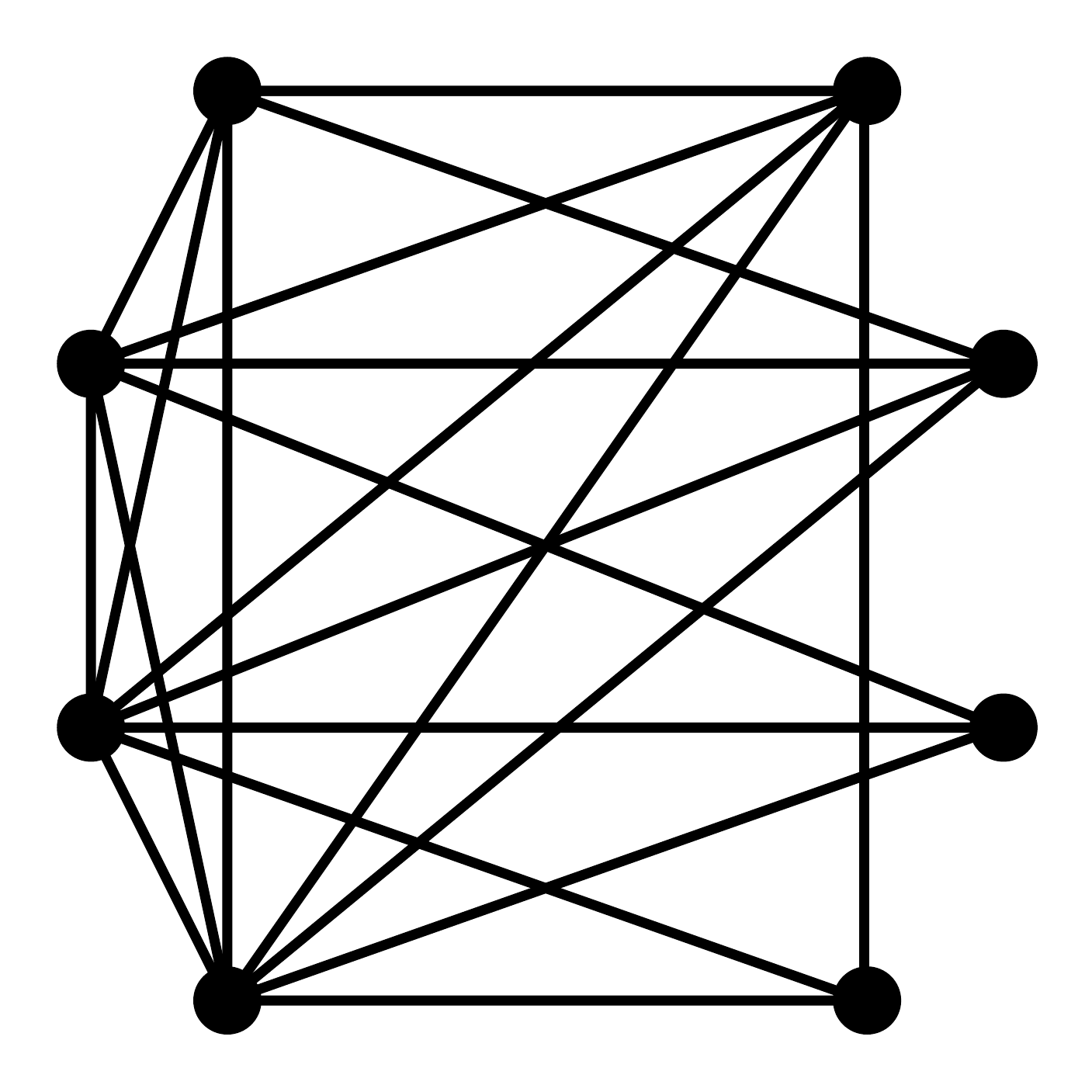}
				& \includegraphics[width=\figtabwidth]{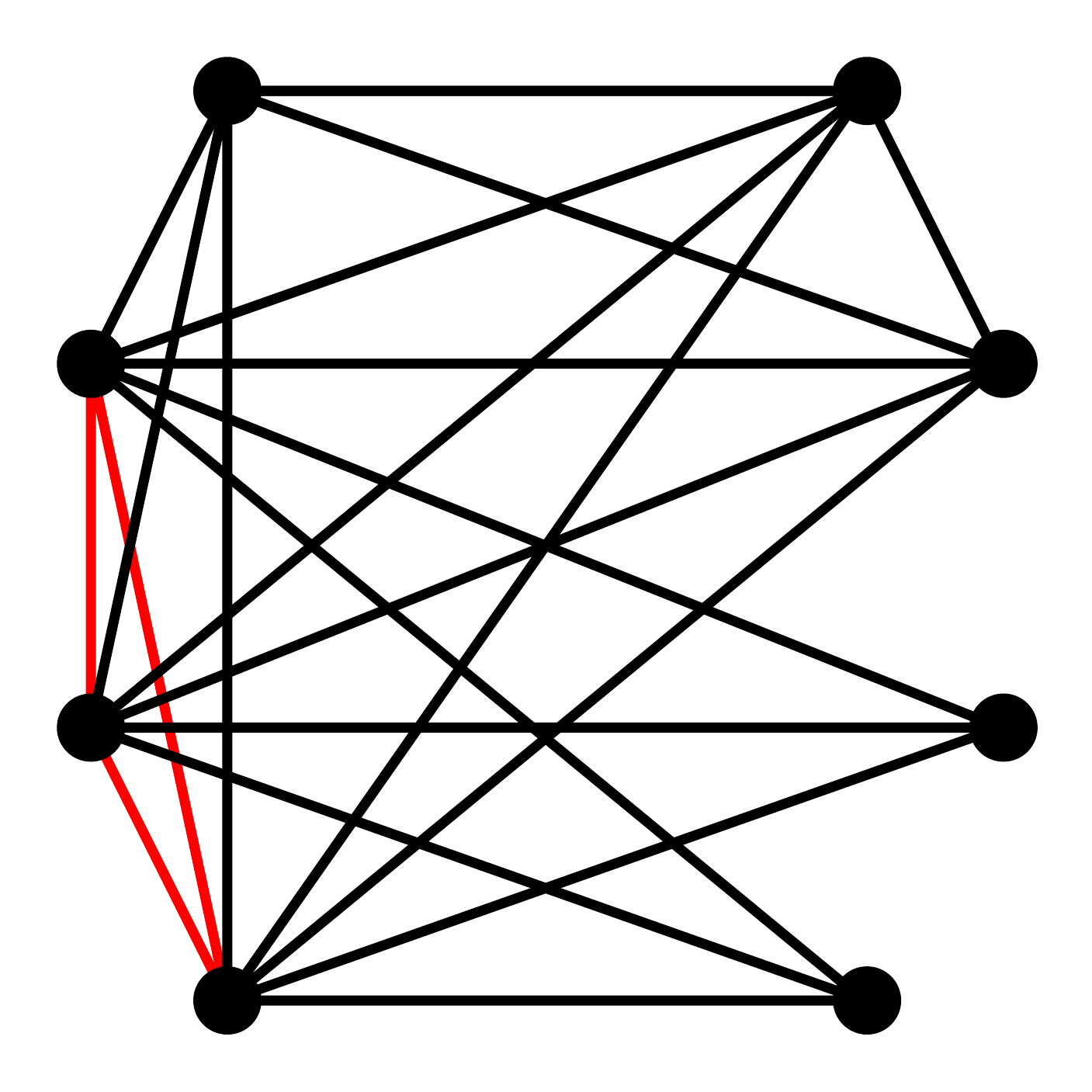}
				& \includegraphics[width=\figtabwidth]{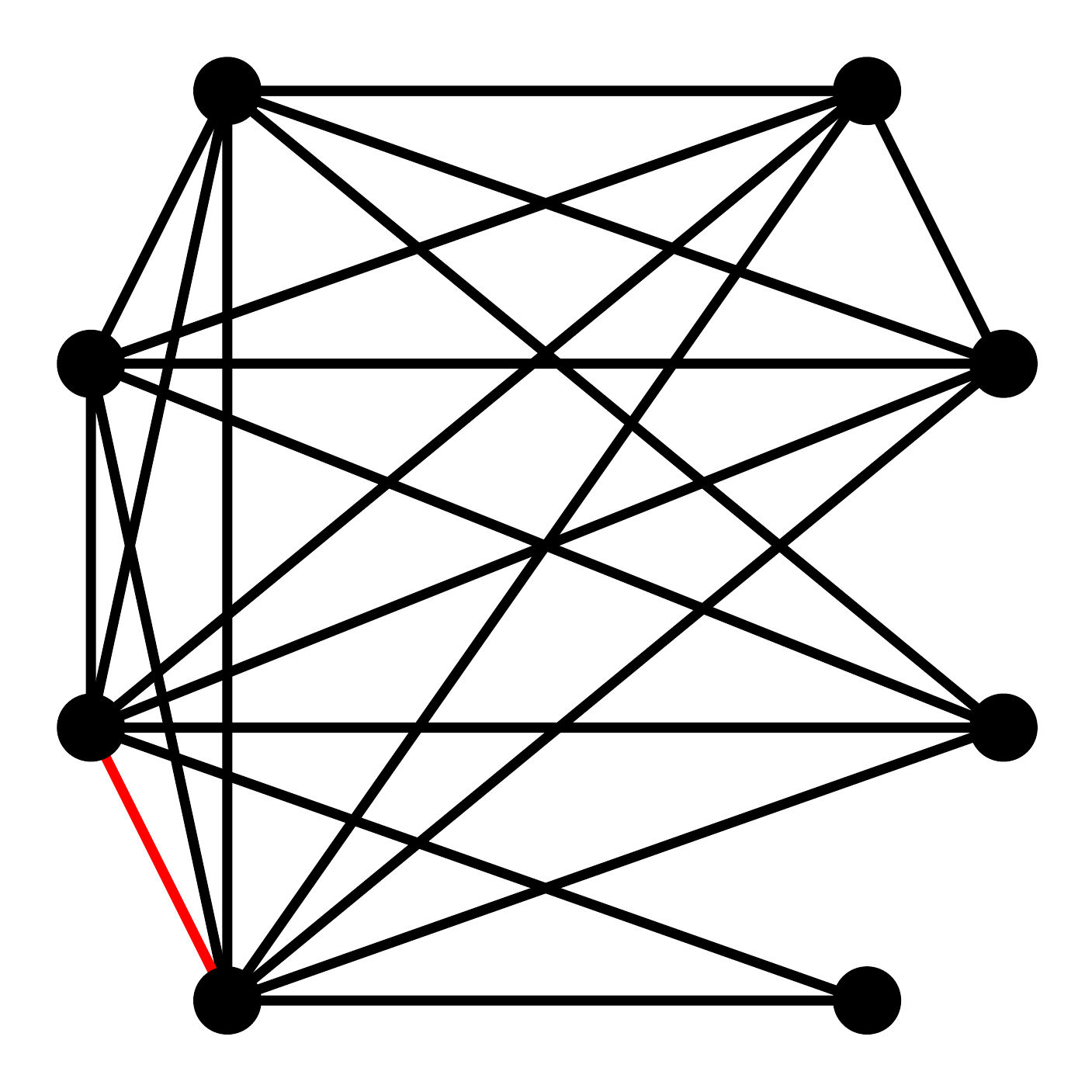}
				& \includegraphics[width=\figtabwidth]{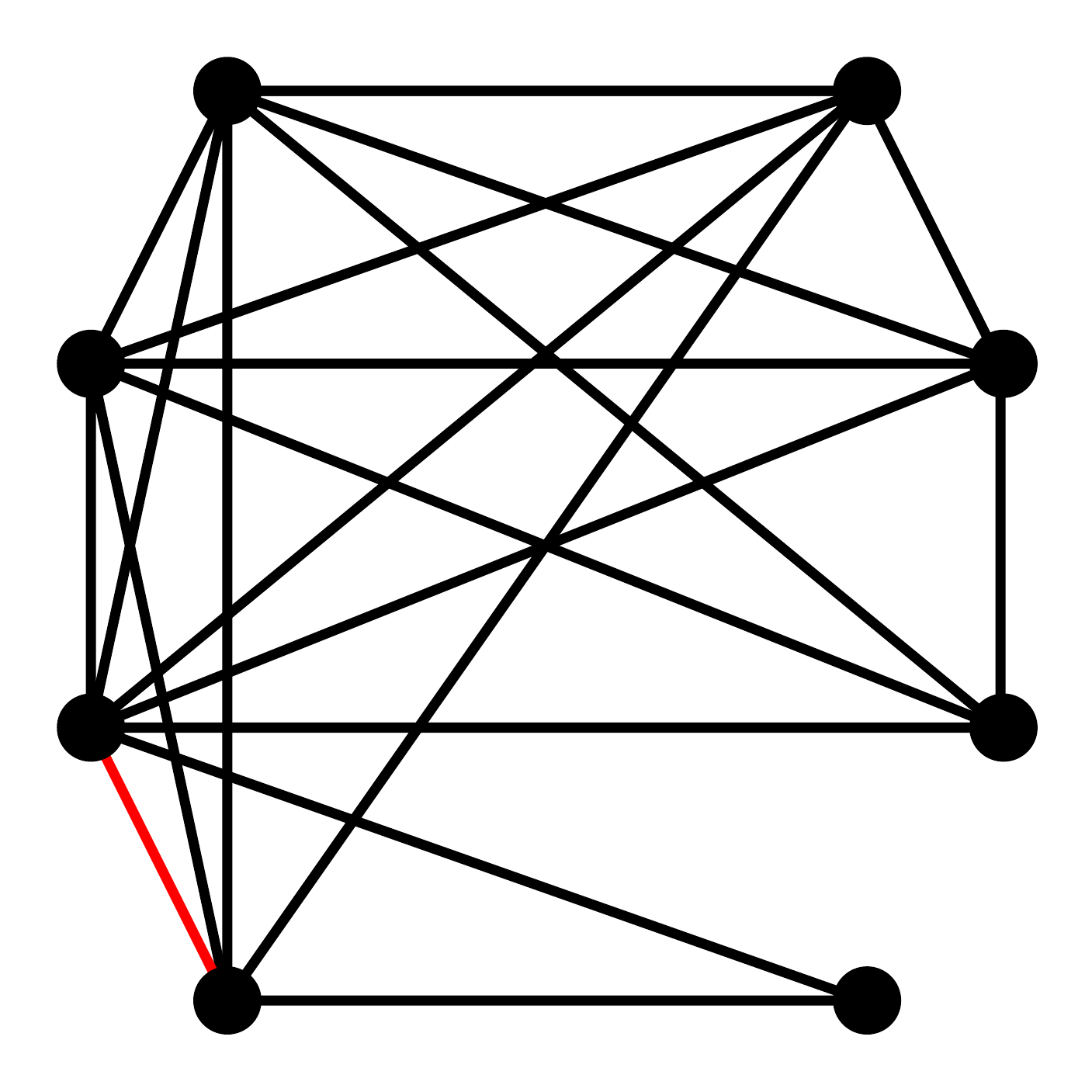}
				&\includegraphics[width=\figtabwidth]{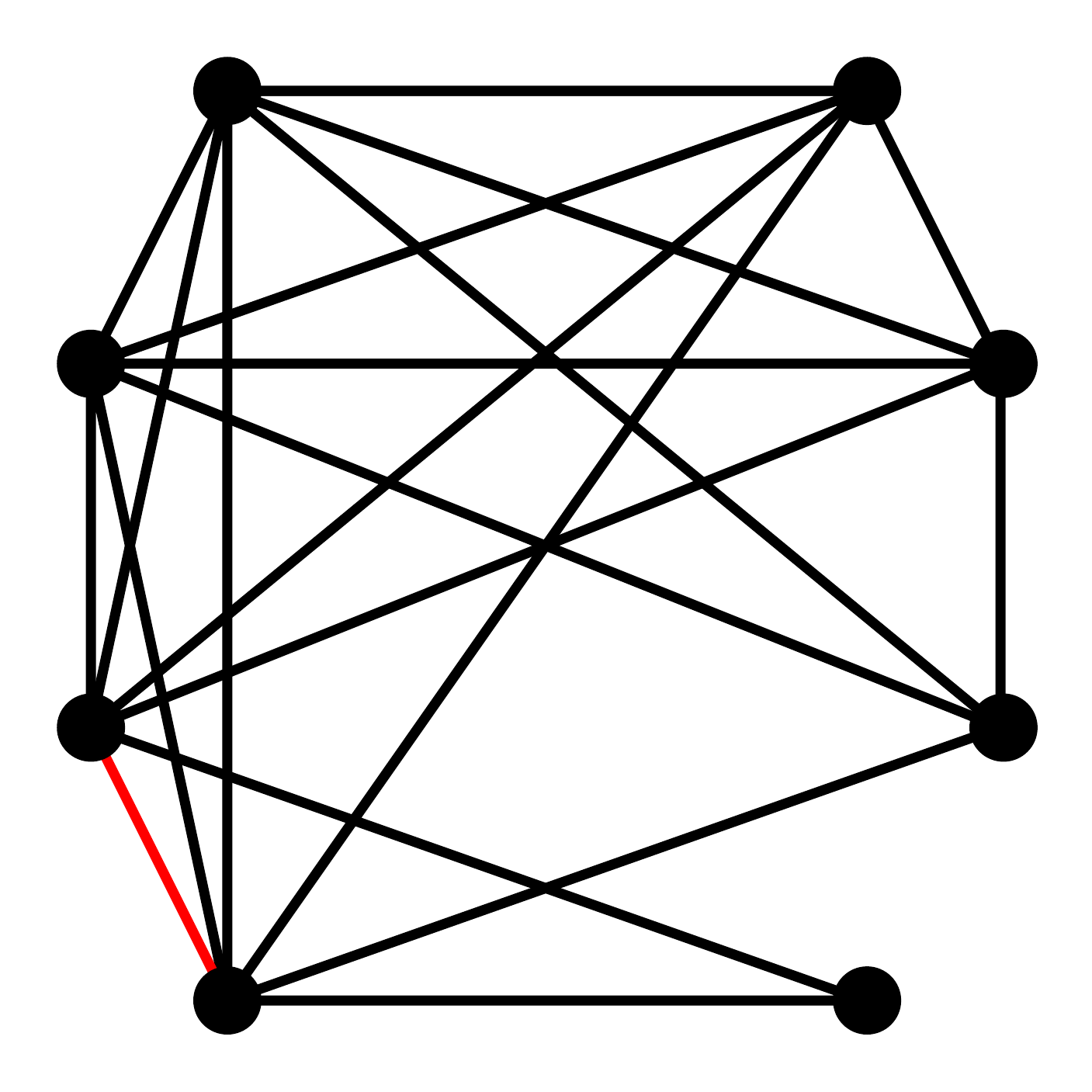}
				&\includegraphics[width=\figtabwidth]{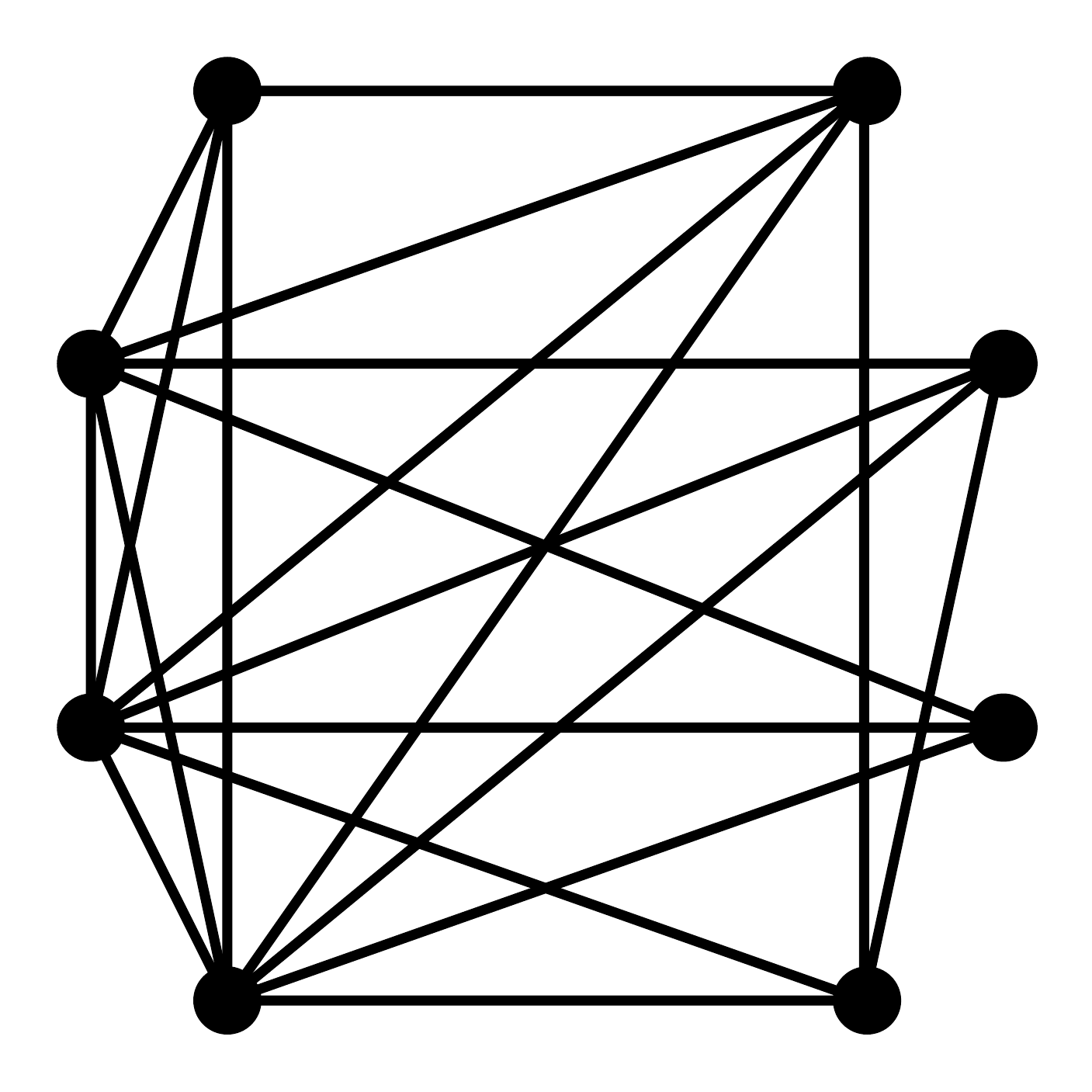}
				& \includegraphics[width=\figtabwidth]{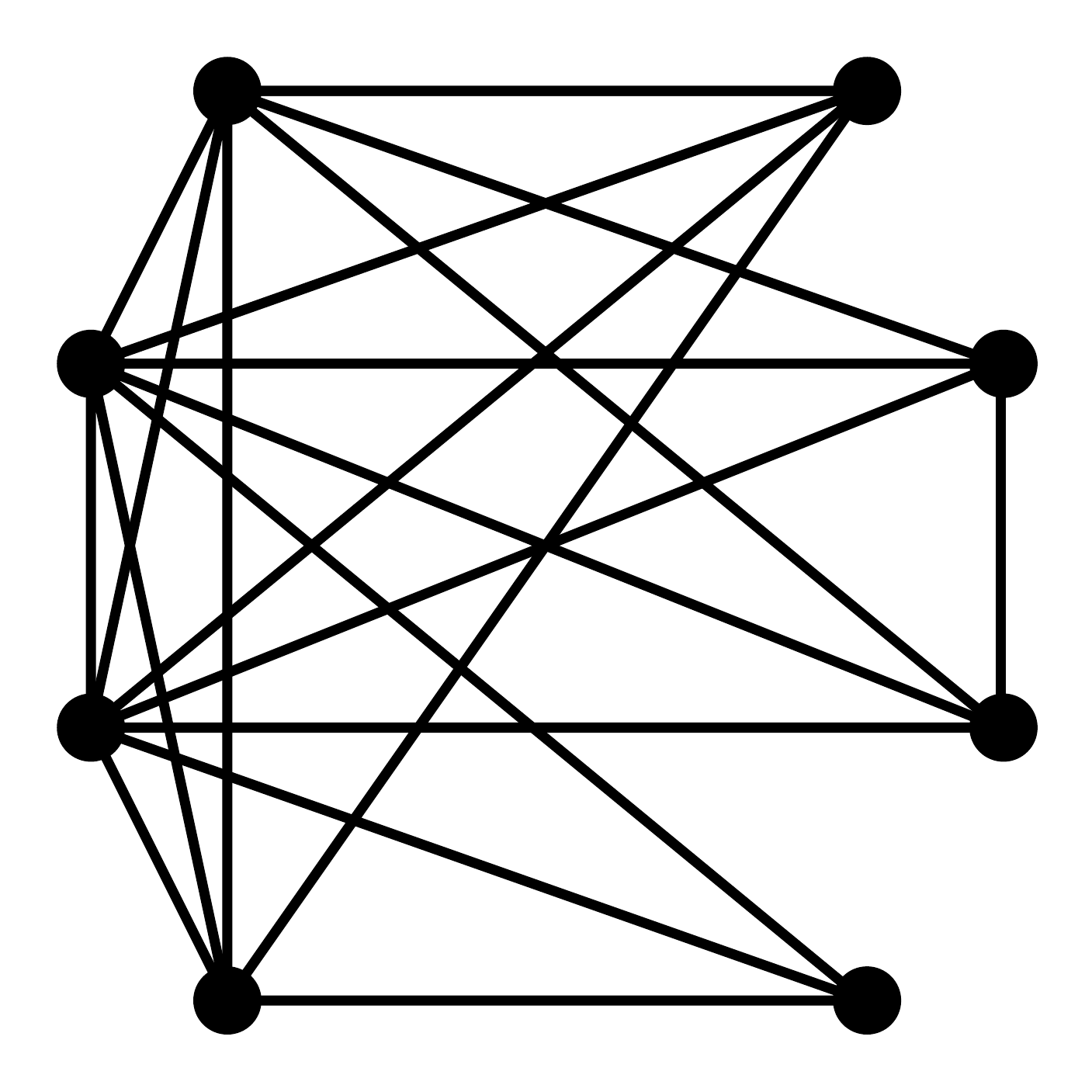}
				\\
			 $p=17$
			& $p=17$
			& $p=18$
			& $p=18$
			 & $p=19$
			 & $p=19$
			 & $p=19$
			 & $p=19$
			\\
		 	 \includegraphics[width=\figtabwidth]{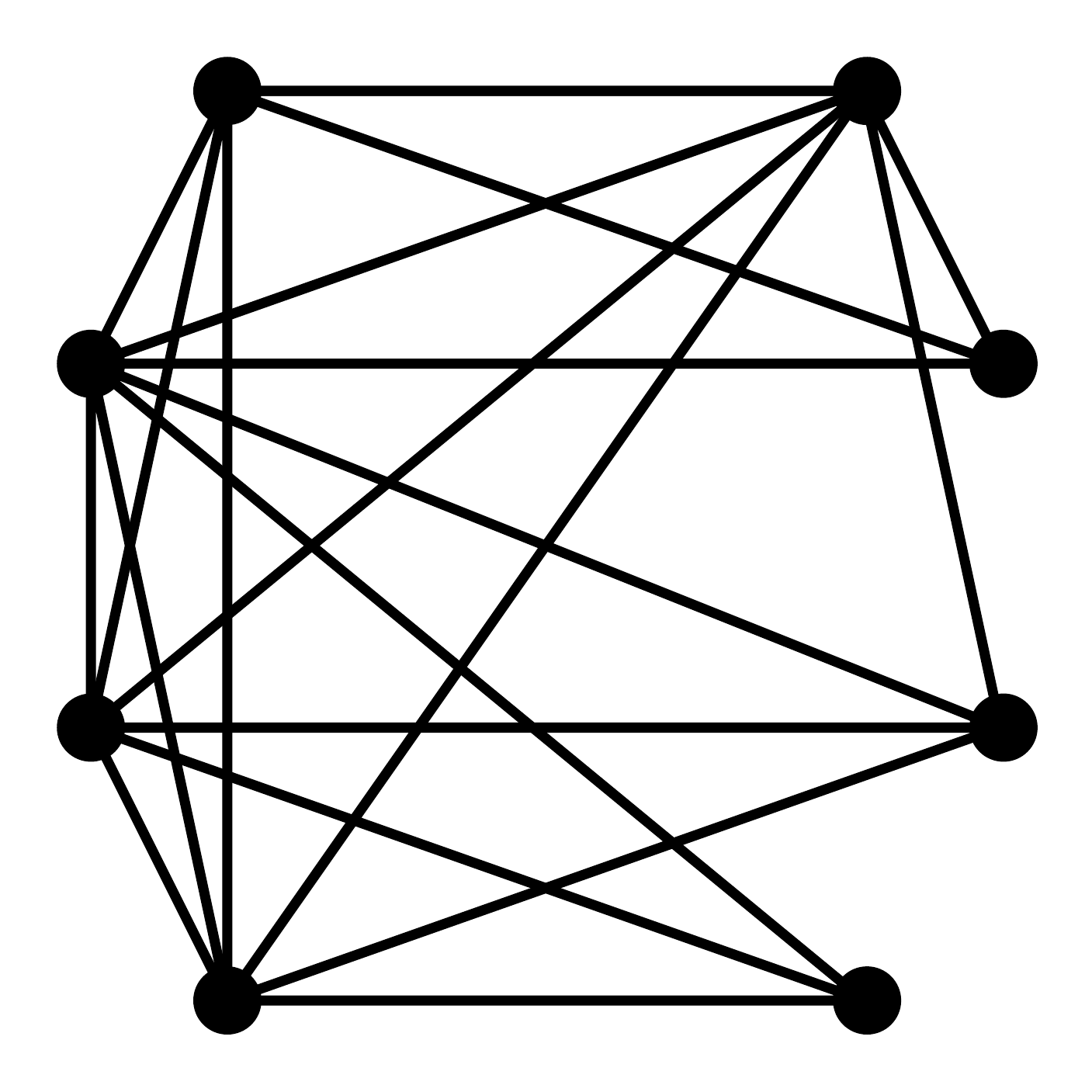}
			&  \includegraphics[width=\figtabwidth]{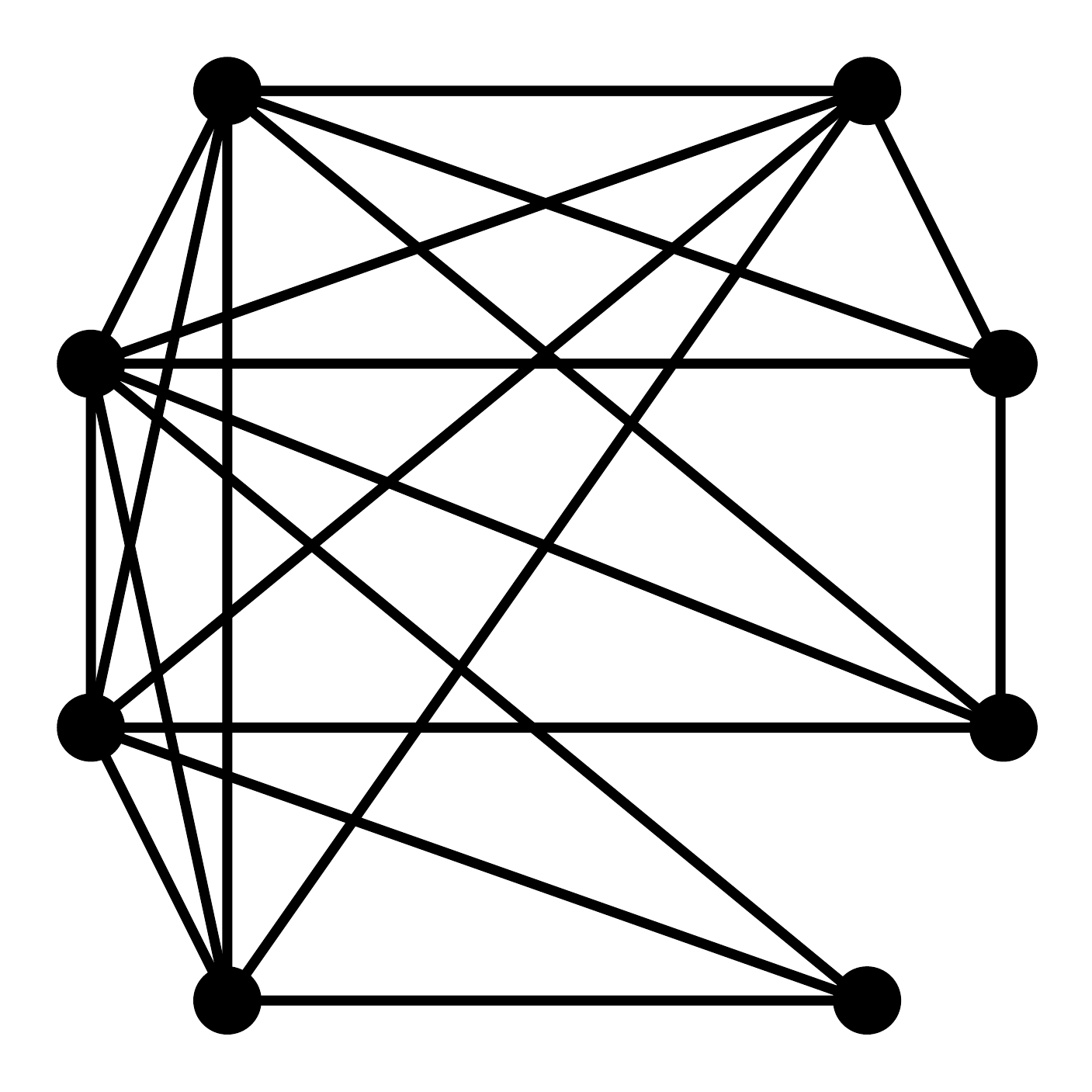}
			& \includegraphics[width=\figtabwidth]{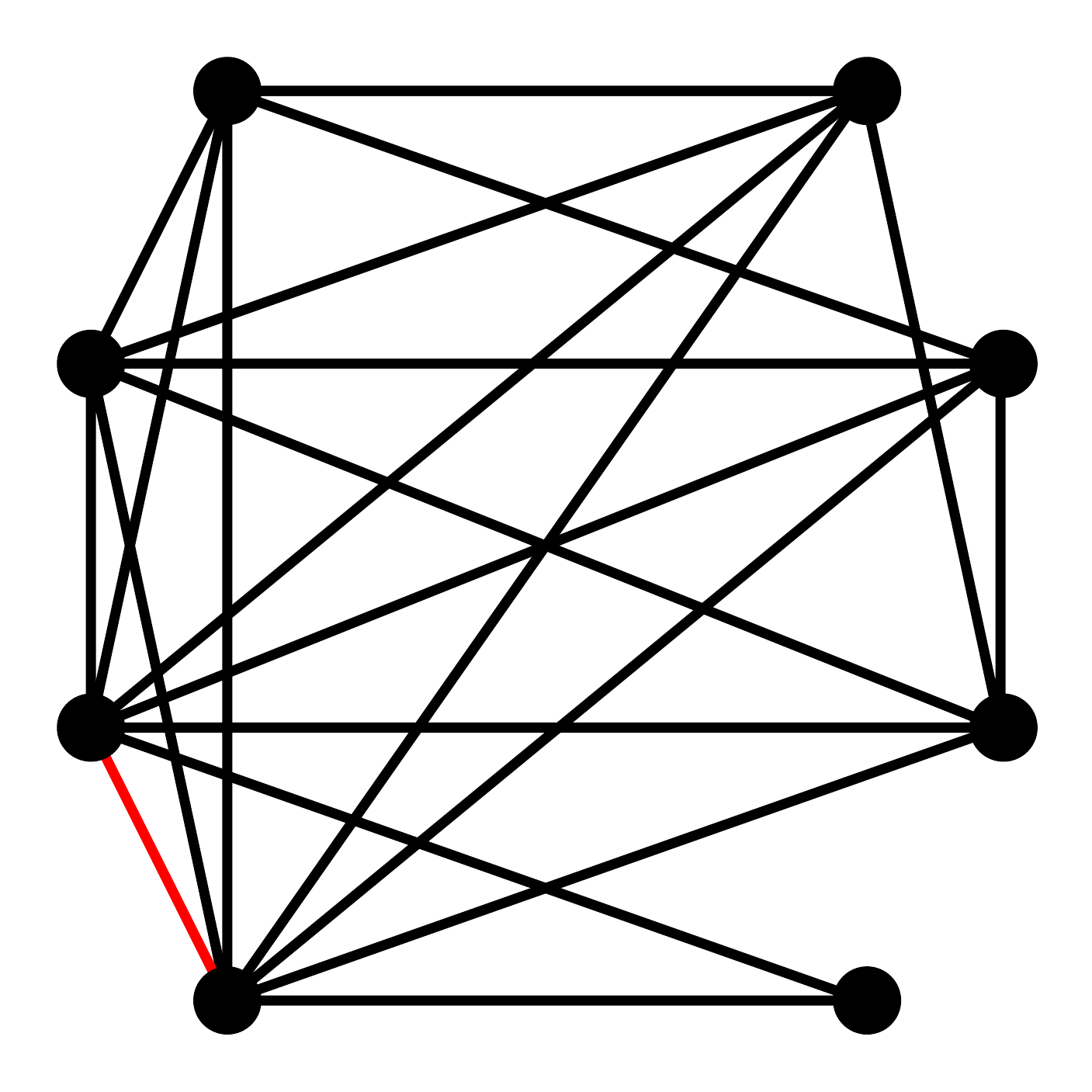}
		    & \includegraphics[width=\figtabwidth]{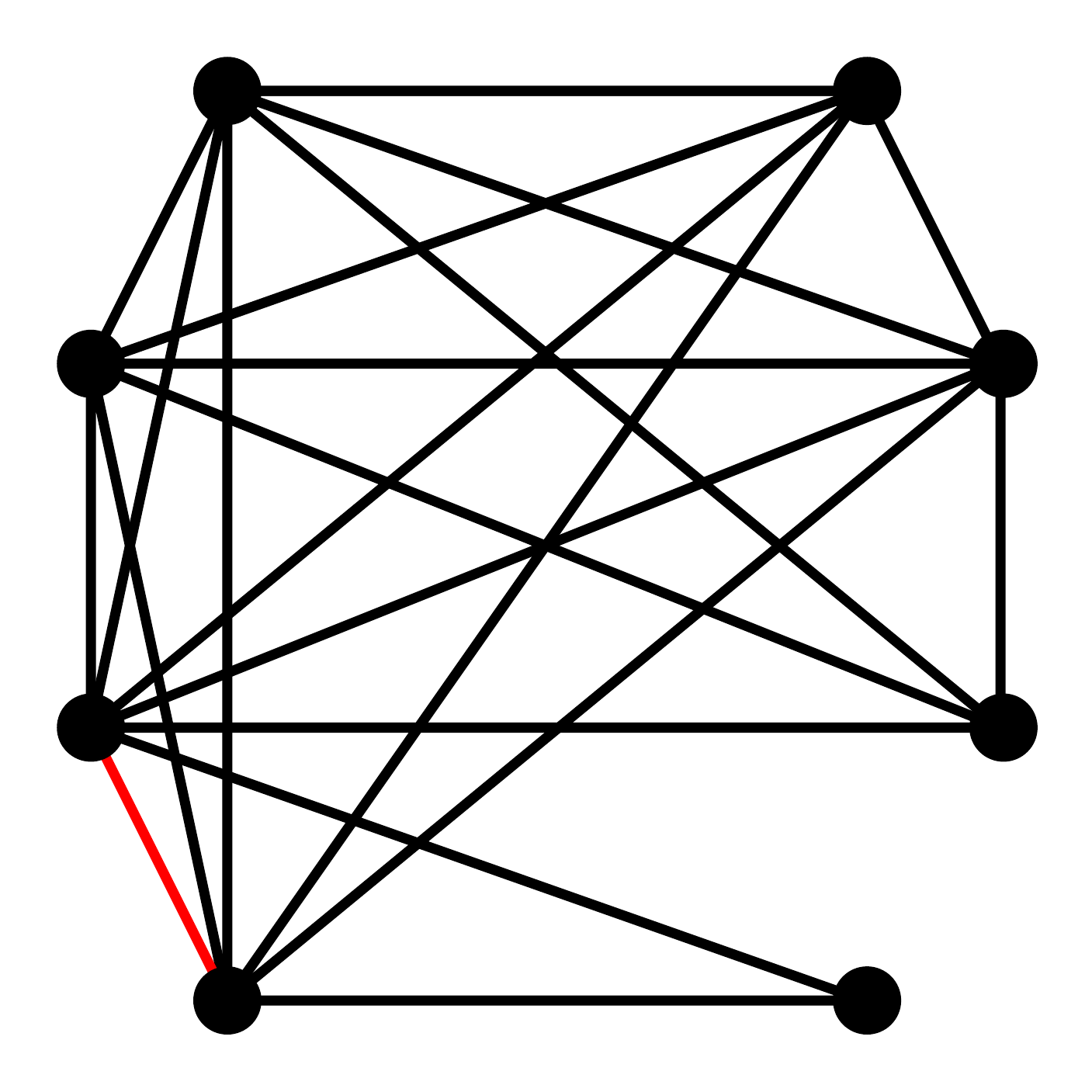}
		    & \includegraphics[width=\figtabwidth]{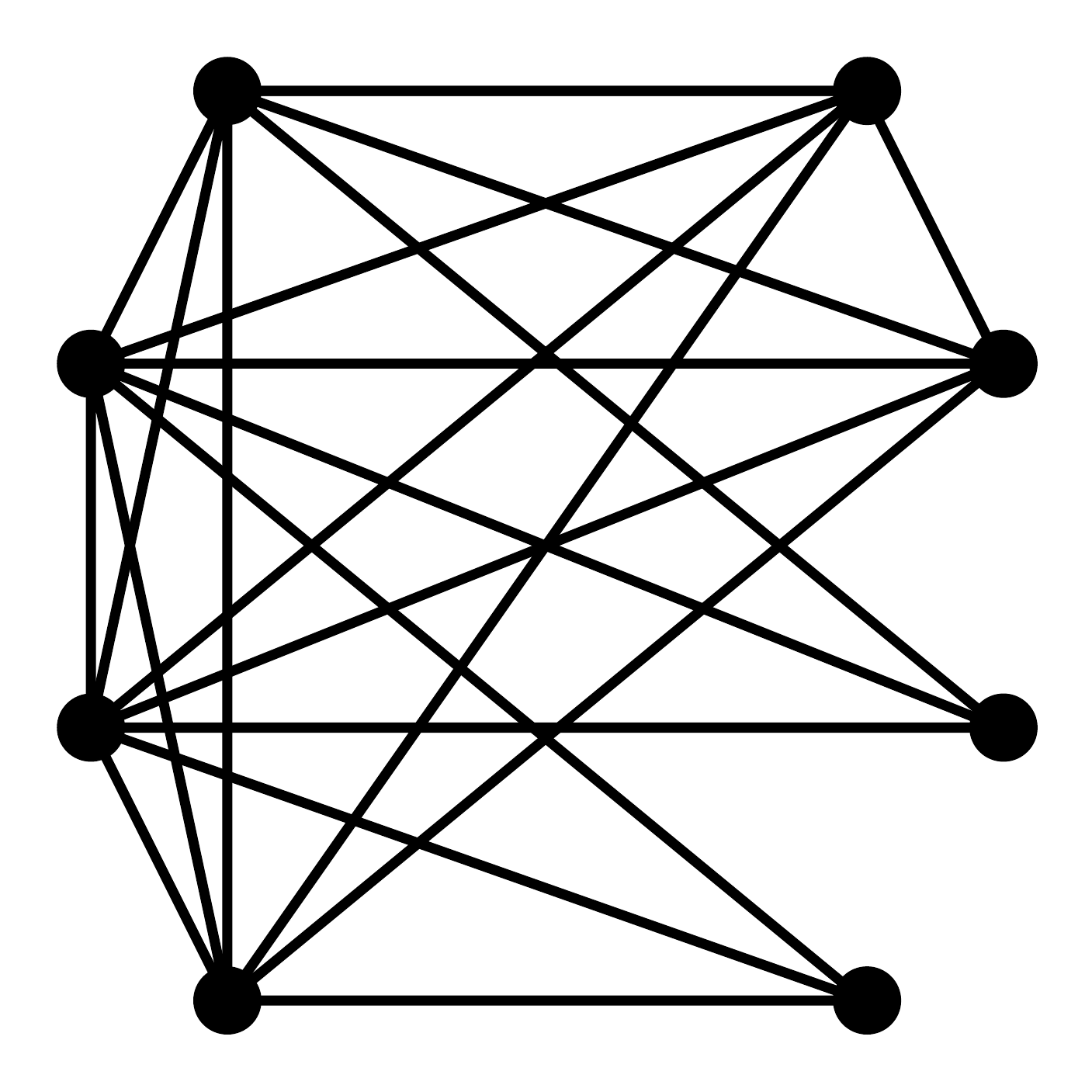}
		    & \includegraphics[width=\figtabwidth]{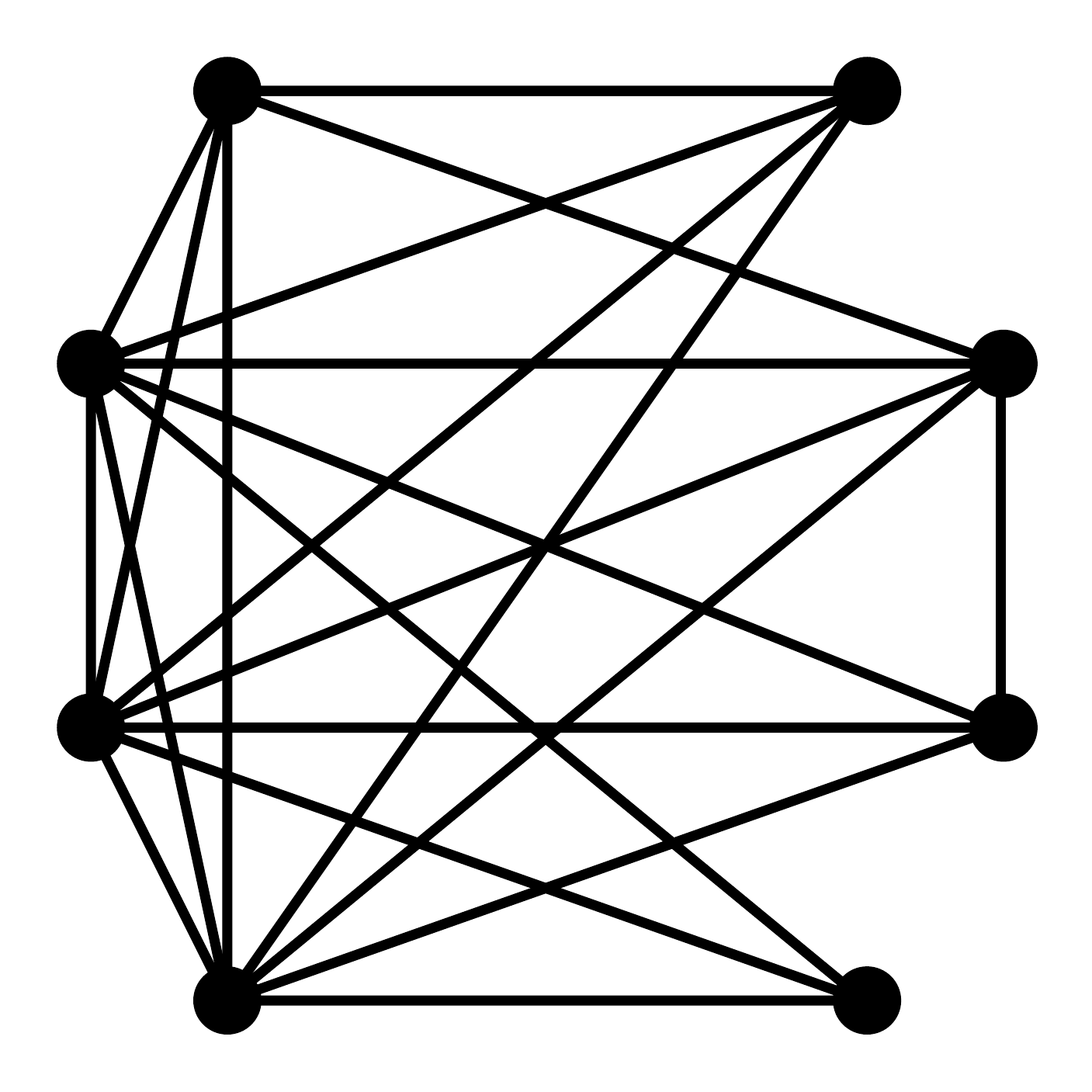}
		    & \includegraphics[width=\figtabwidth]{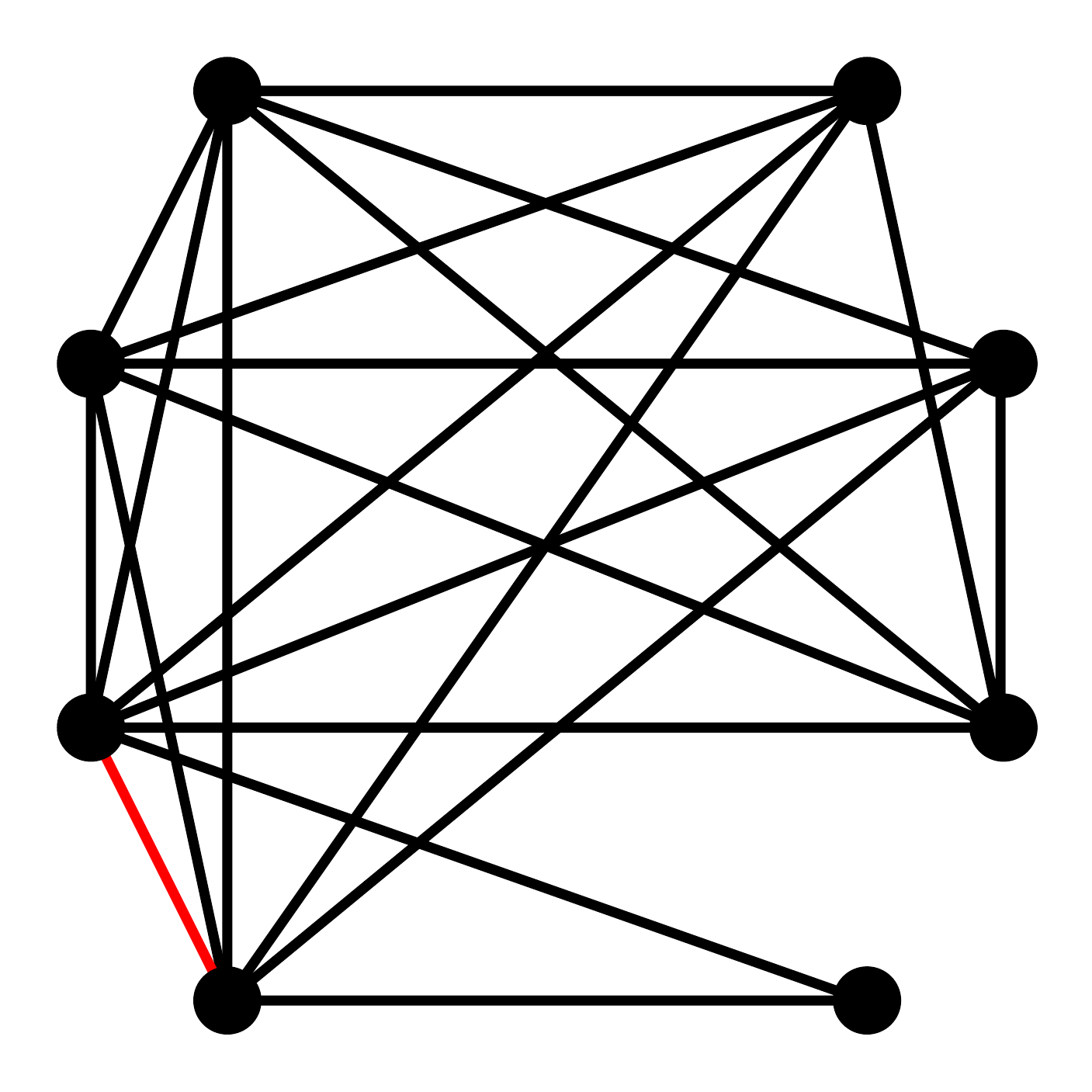}
		 	& \includegraphics[width=\figtabwidth]{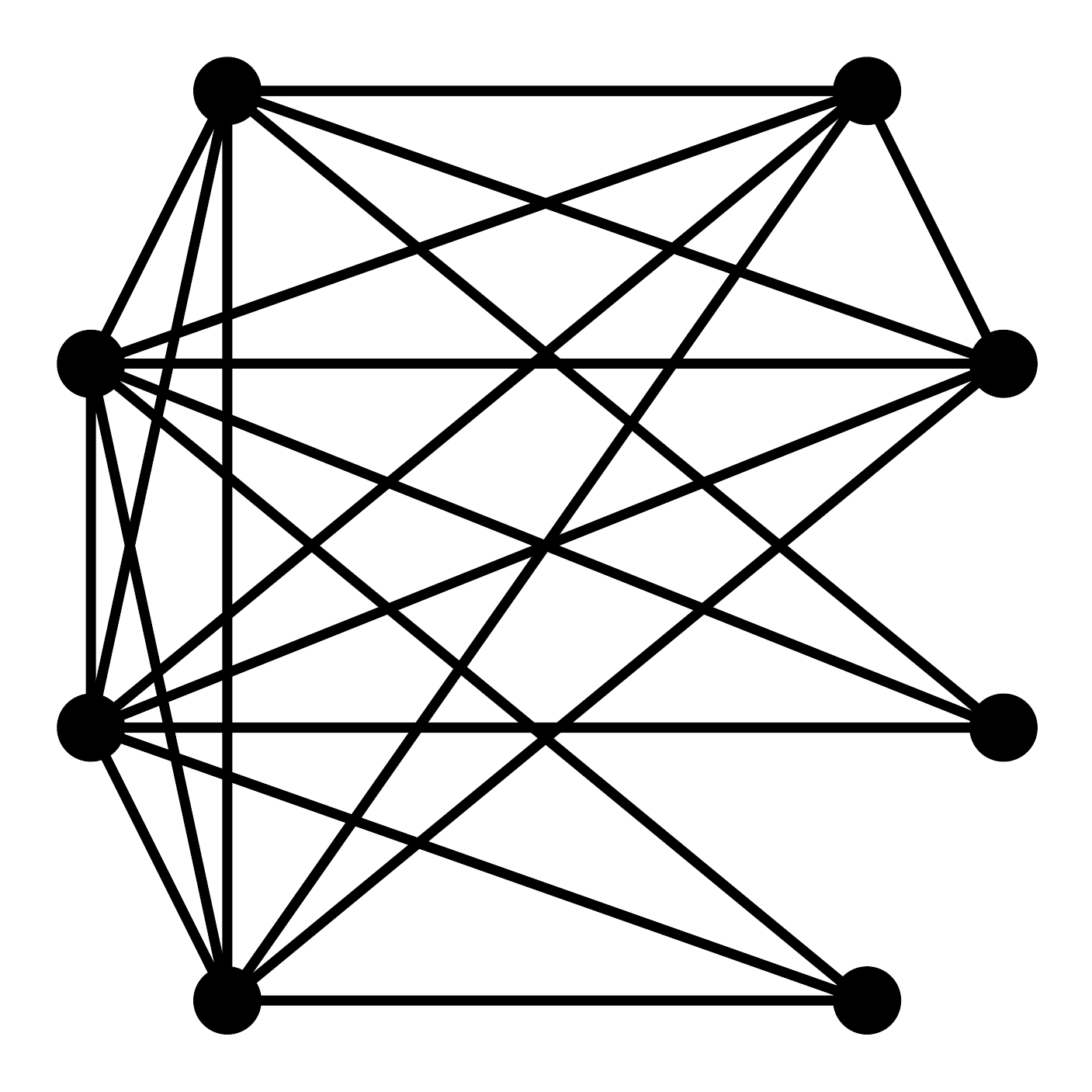}
			\\
			$p=19$
				& $p=19$
				& $p=20$
				& $p=21$
				& $p=21$ 
				& $p=21$
				& $p=22$
				& $p=23$ \\
		 \includegraphics[width=\figtabwidth]{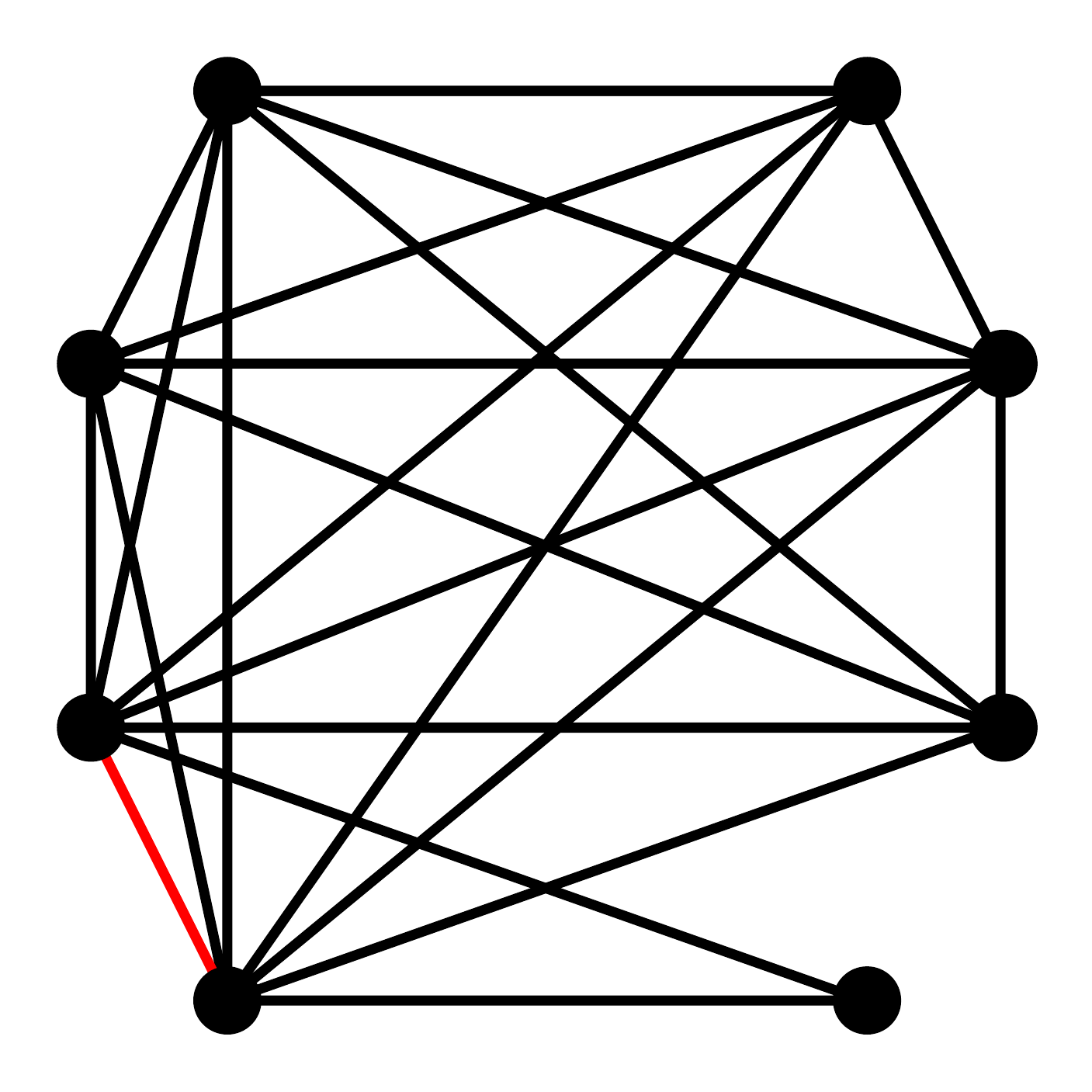}
		 	&  \includegraphics[width=\figtabwidth]{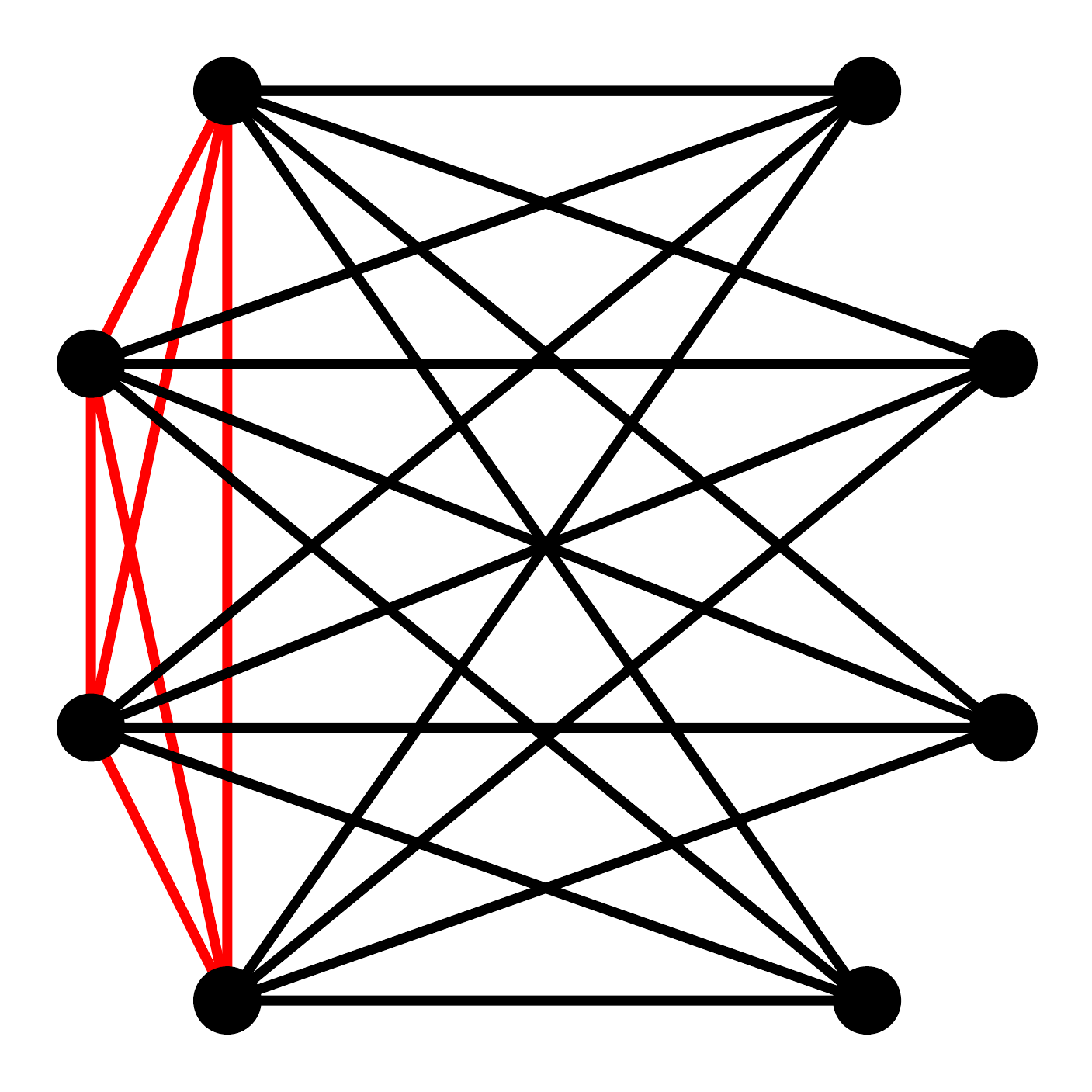}
		 	&  \includegraphics[width=\figtabwidth]{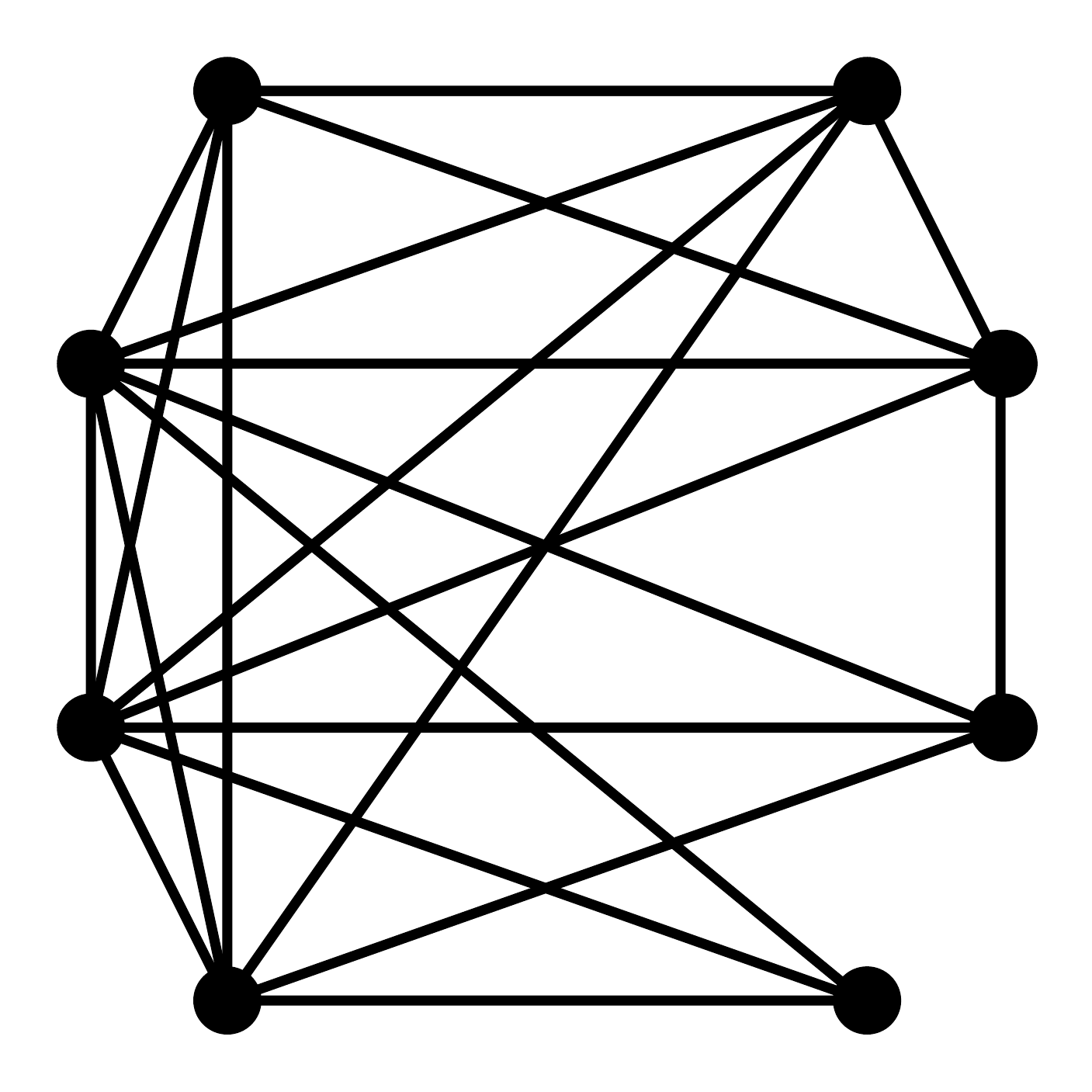}
		 	& \includegraphics[width=\figtabwidth]{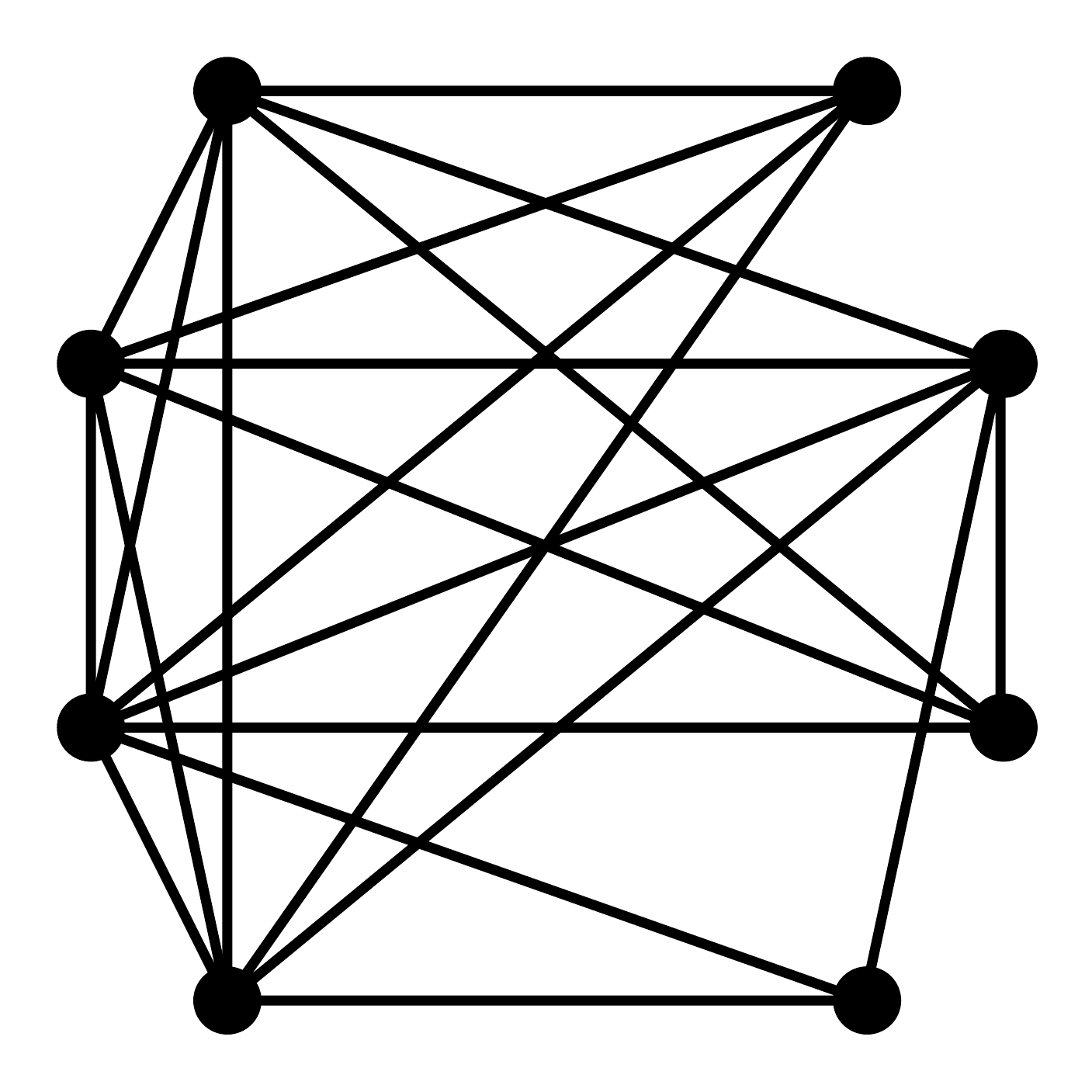}
			&\includegraphics[width=\figtabwidth]{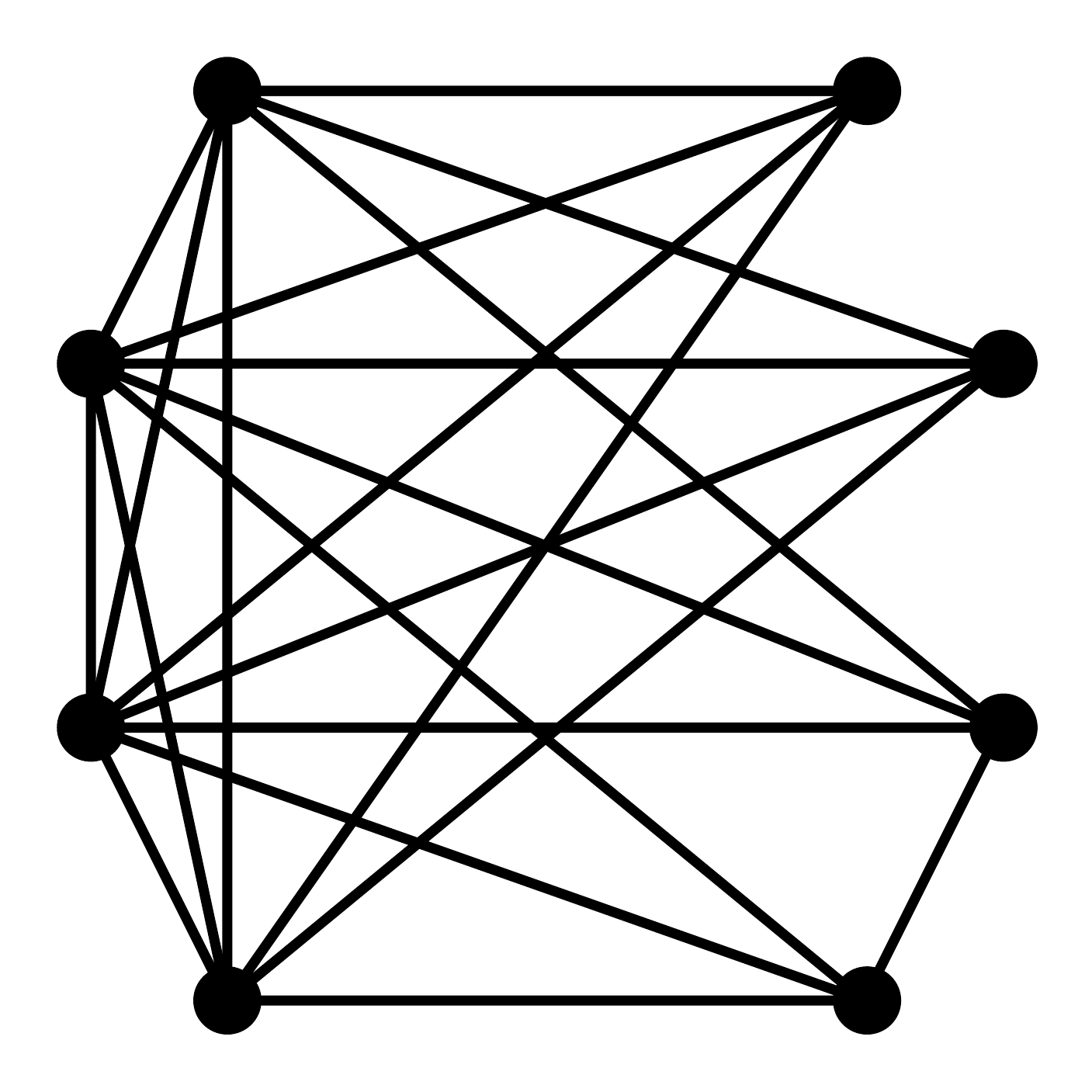}
		 	& \includegraphics[width=\figtabwidth]{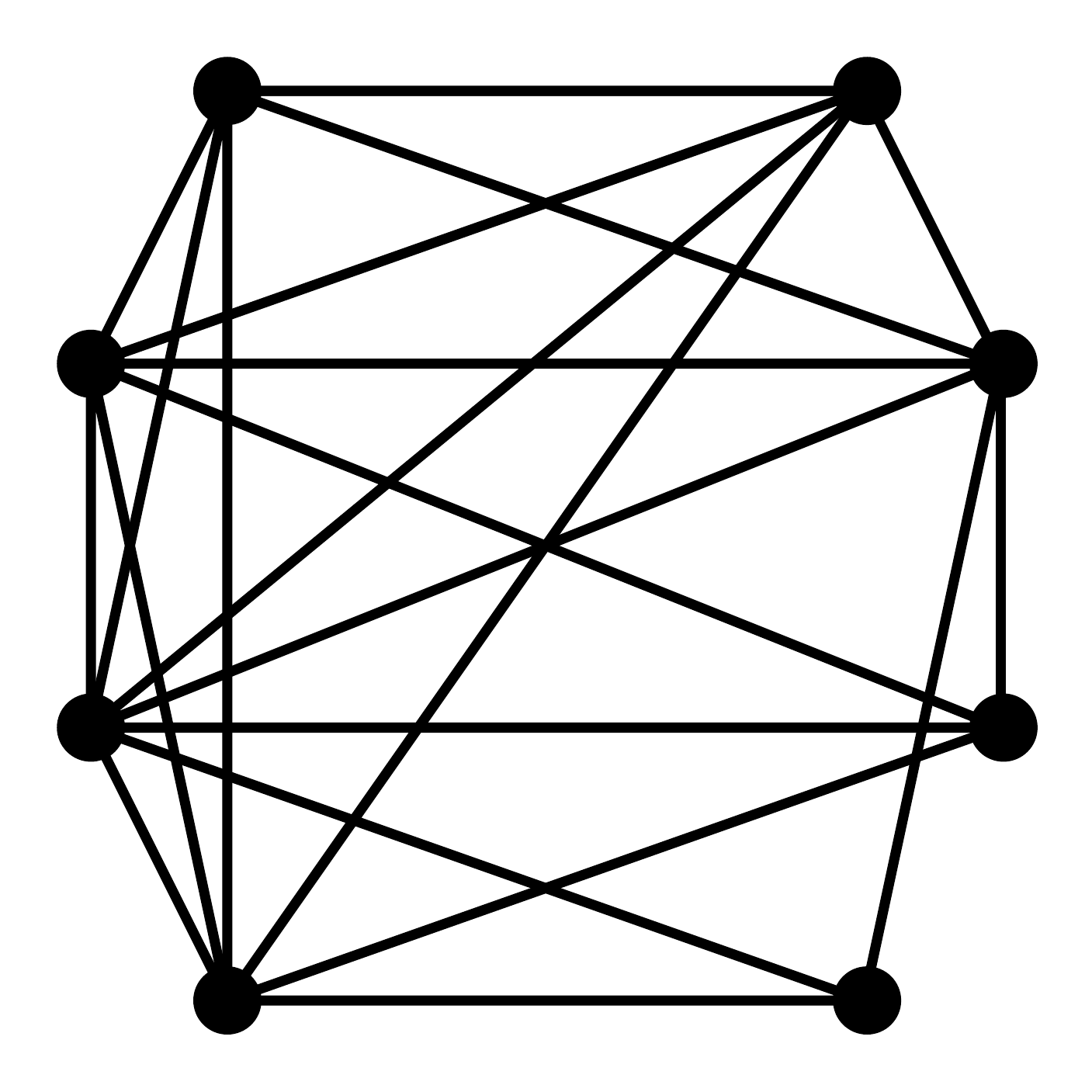}
			& \includegraphics[width=\figtabwidth]{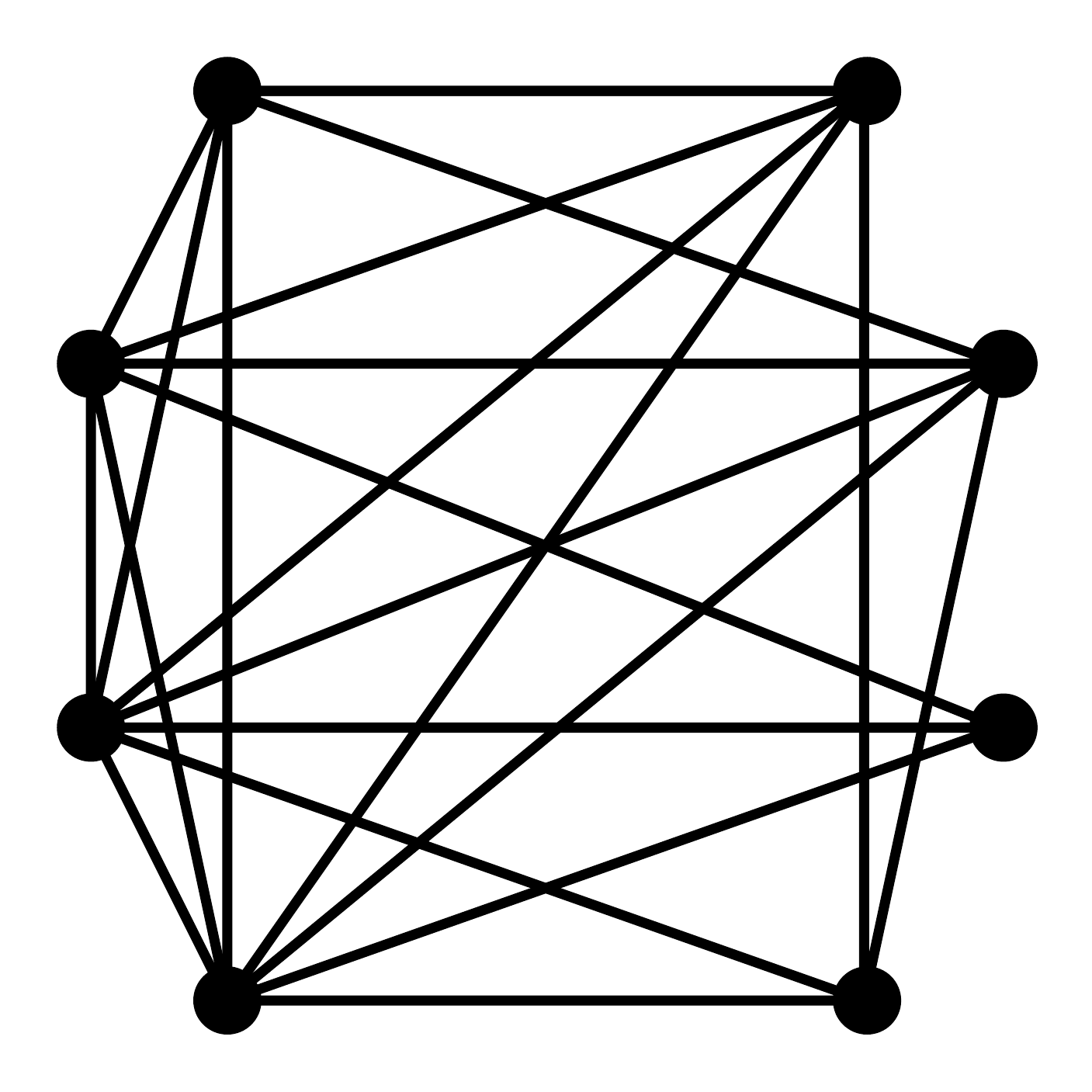}
		 	& \includegraphics[width=\figtabwidth]{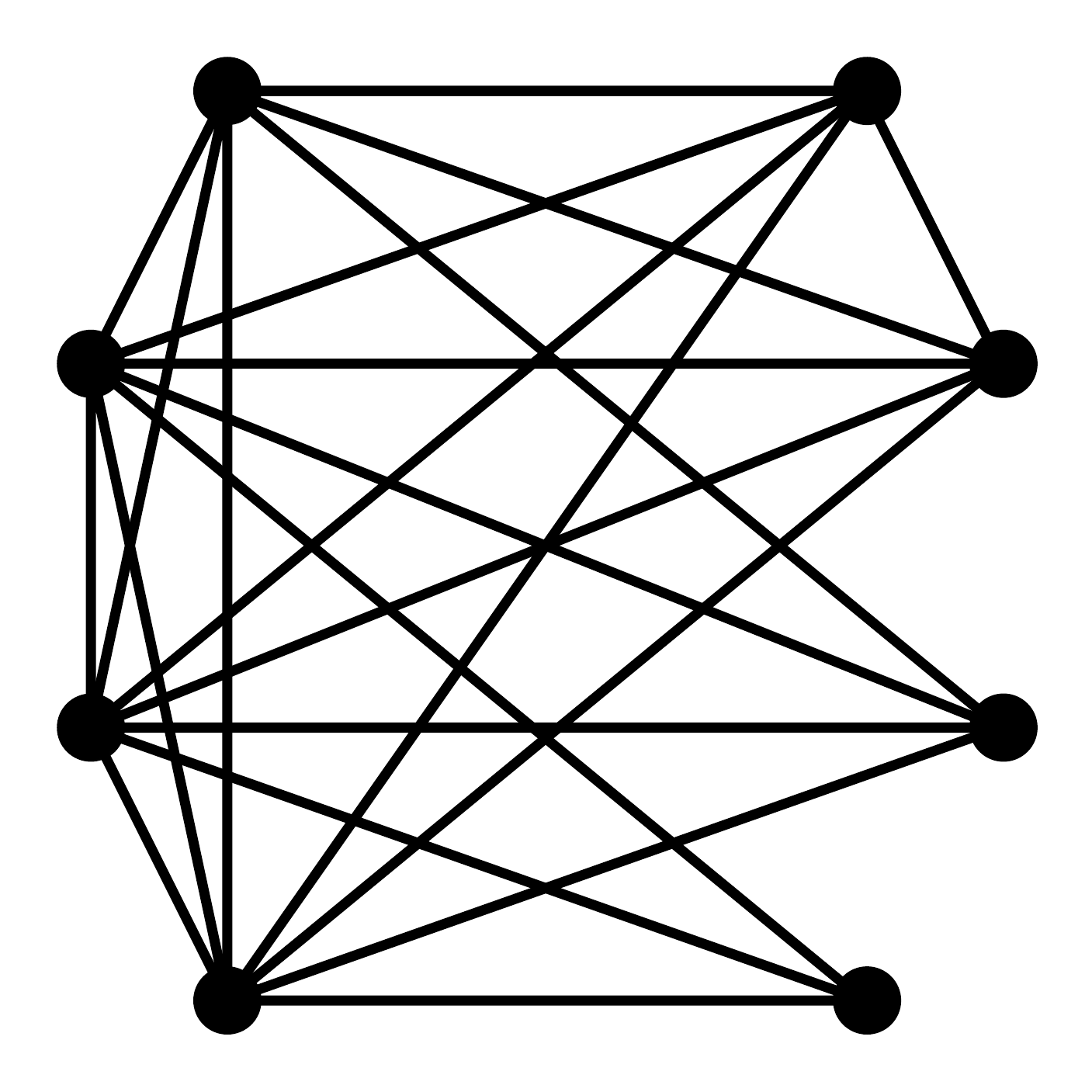}
			\\
			$p=24$ & $p=24$ 
		 	& $p=25$ & $p=25$&  $p=26$ & $p=26$ & $p=26$ & $p=27$ \\
	\end{tabular}
	\caption{\label{fig:ExtremalElemGraphs}The $p$-extremal elementary graphs %
				  where $1 \leq p \leq 27$.}
\end{figure}

To describe the complete structural characterization of $p$-extremal graphs
	on $n$ vertices for all even $n \geq n_p$,
	we apply Theorem \ref{thm:hswystructure}.
An important step in applying Theorem \ref{thm:hswystructure} is
	to consider every factorization $p = \prod p_i$
	and to check which spires 
	are generated by the $p_i$-extremal elementary graphs.
We describe these structures based on the types of constructions given by these factorizations.
It is necessary to consider the $p$-extremal elementary graphs for $1 \leq p \leq 10$,
	in Figure \ref{fig:PowersOf2}.
	
\def\powtabwidth{0.12\textwidth}
\begin{figure}[t]
	\centering
	\begin{tabular}[h]{ccccccc}
 \includegraphics[width=\figtabwidth]{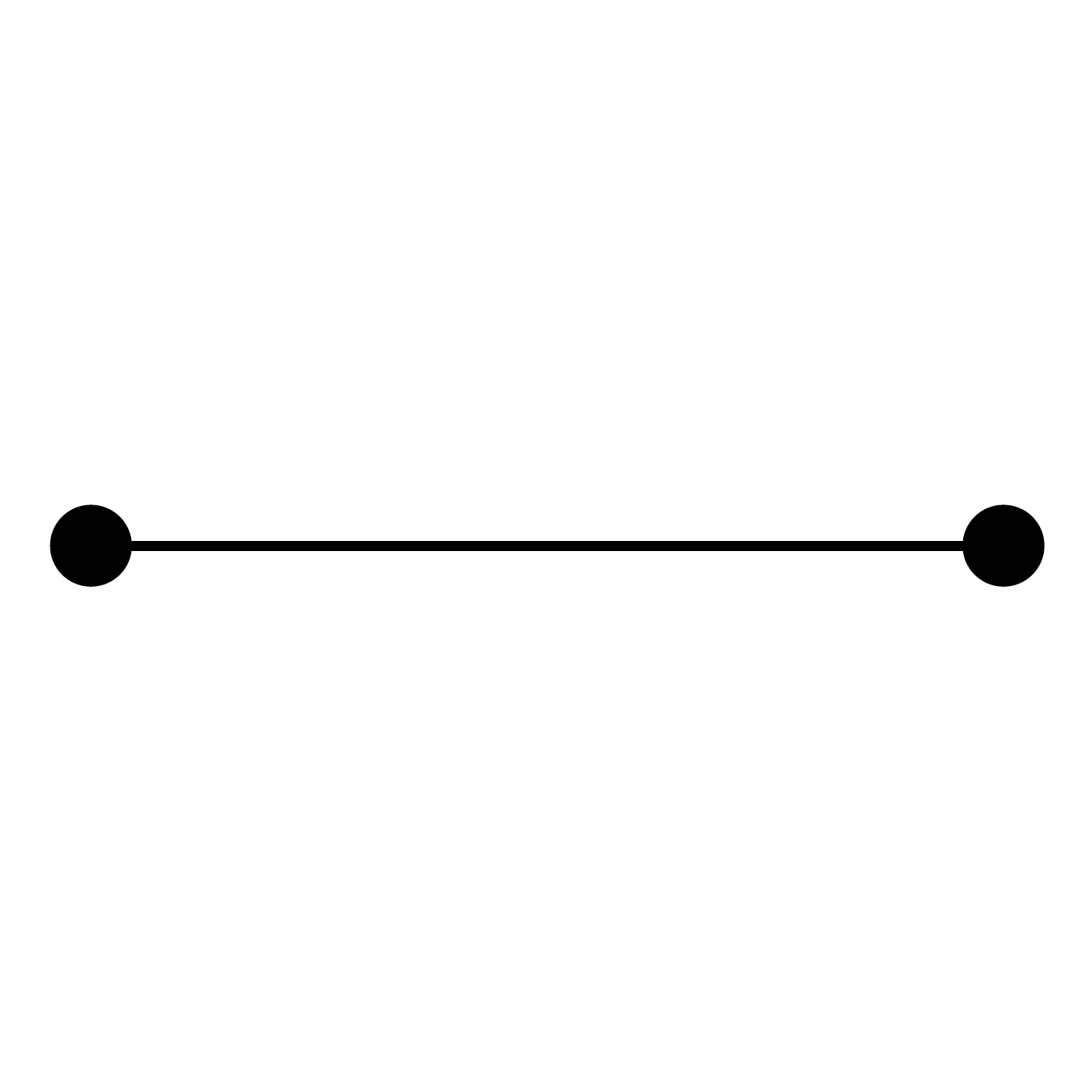}
			&  \includegraphics[width=\powtabwidth]{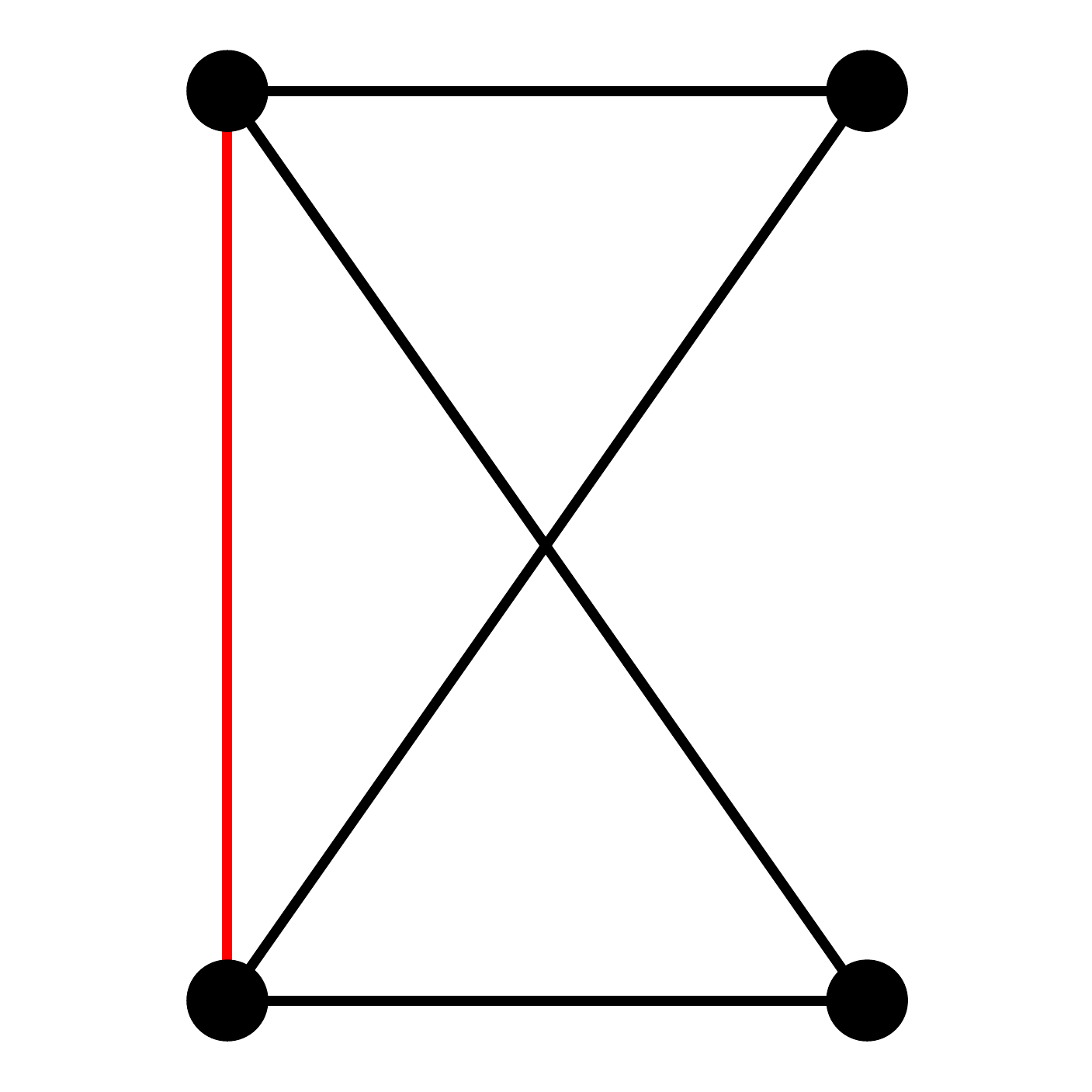}
		    &  \includegraphics[width= \powtabwidth]{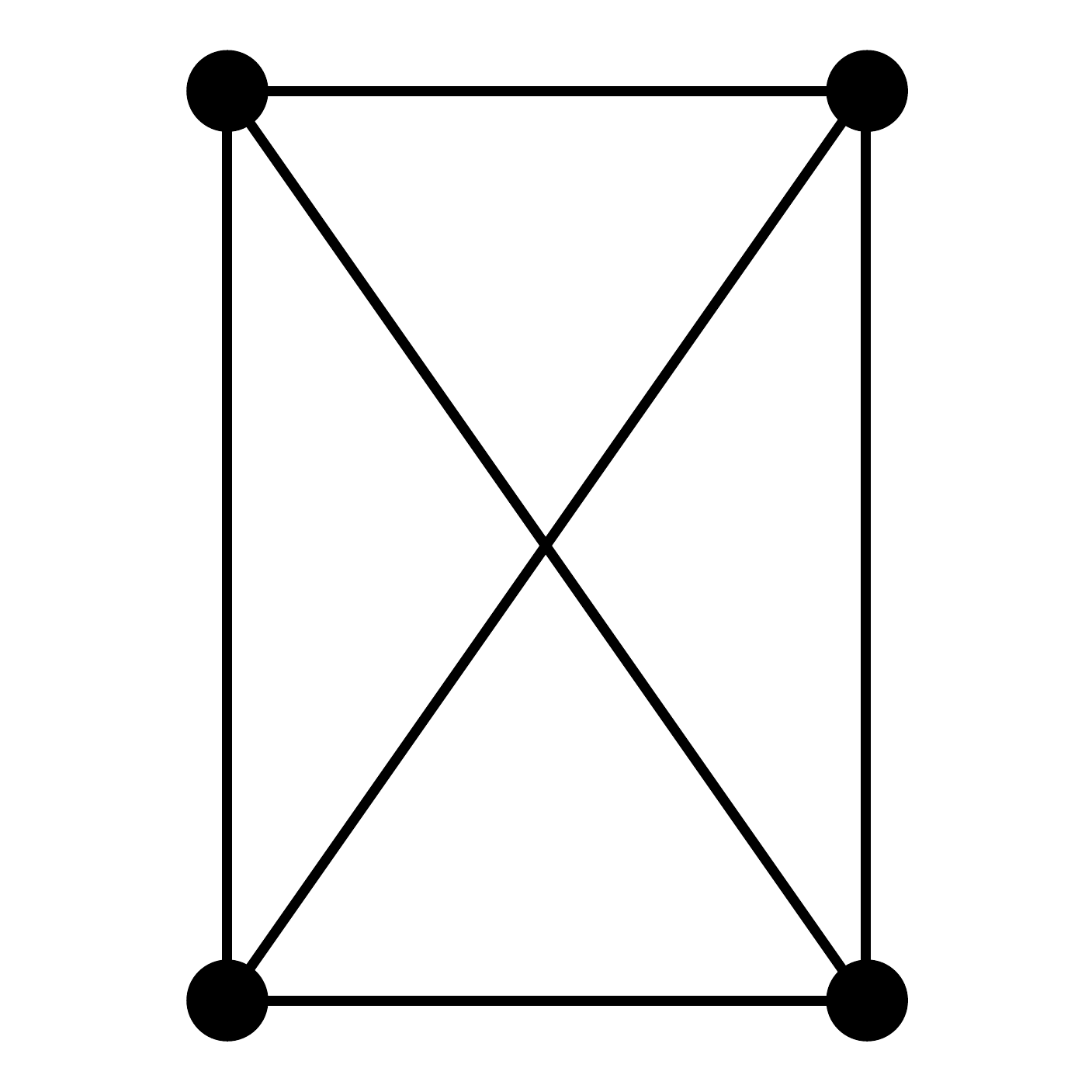}
		    & \includegraphics[width= \powtabwidth]{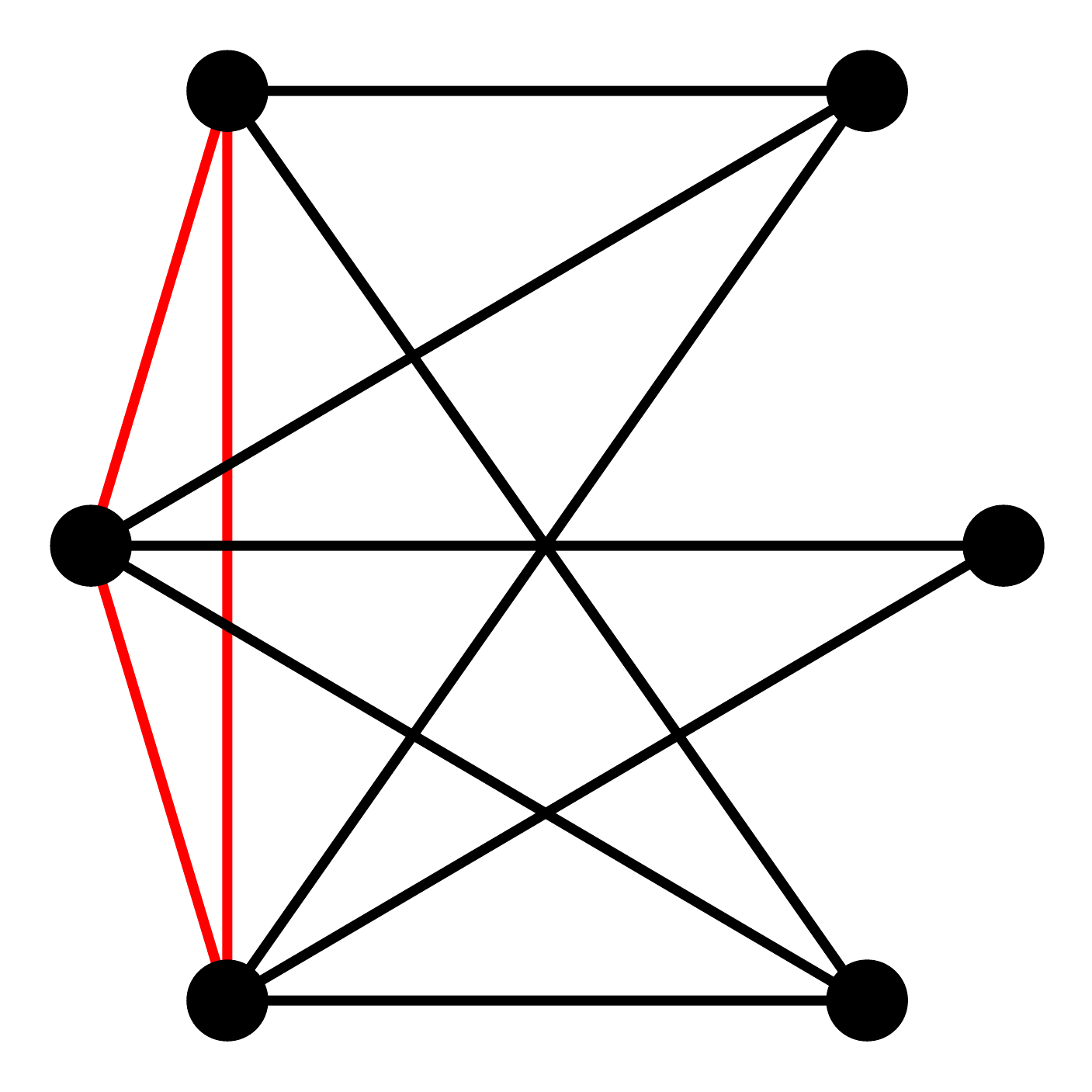}
		    & \includegraphics[width= \powtabwidth]{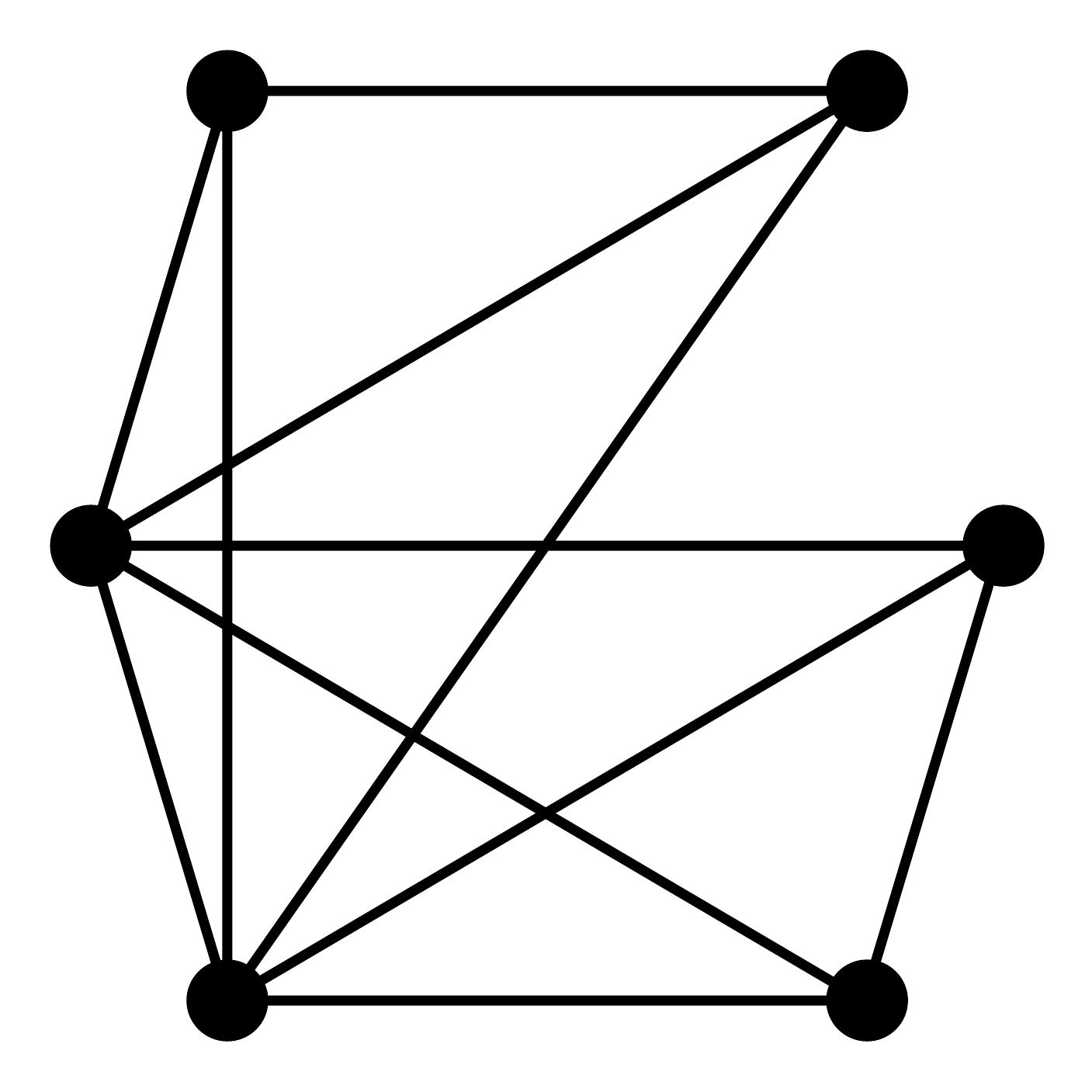}
		    & \includegraphics[width= \powtabwidth]{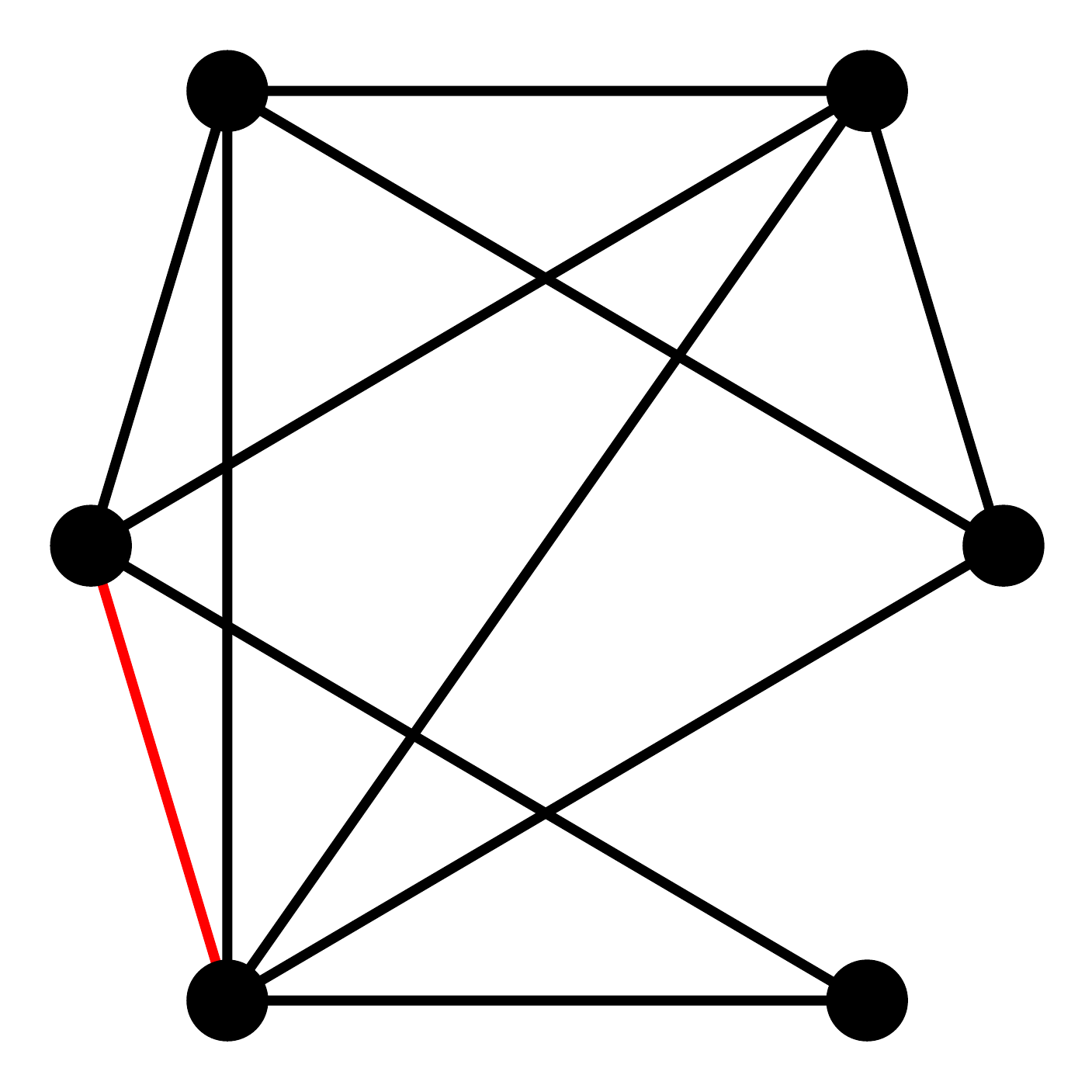}
		    & \includegraphics[width= \powtabwidth]{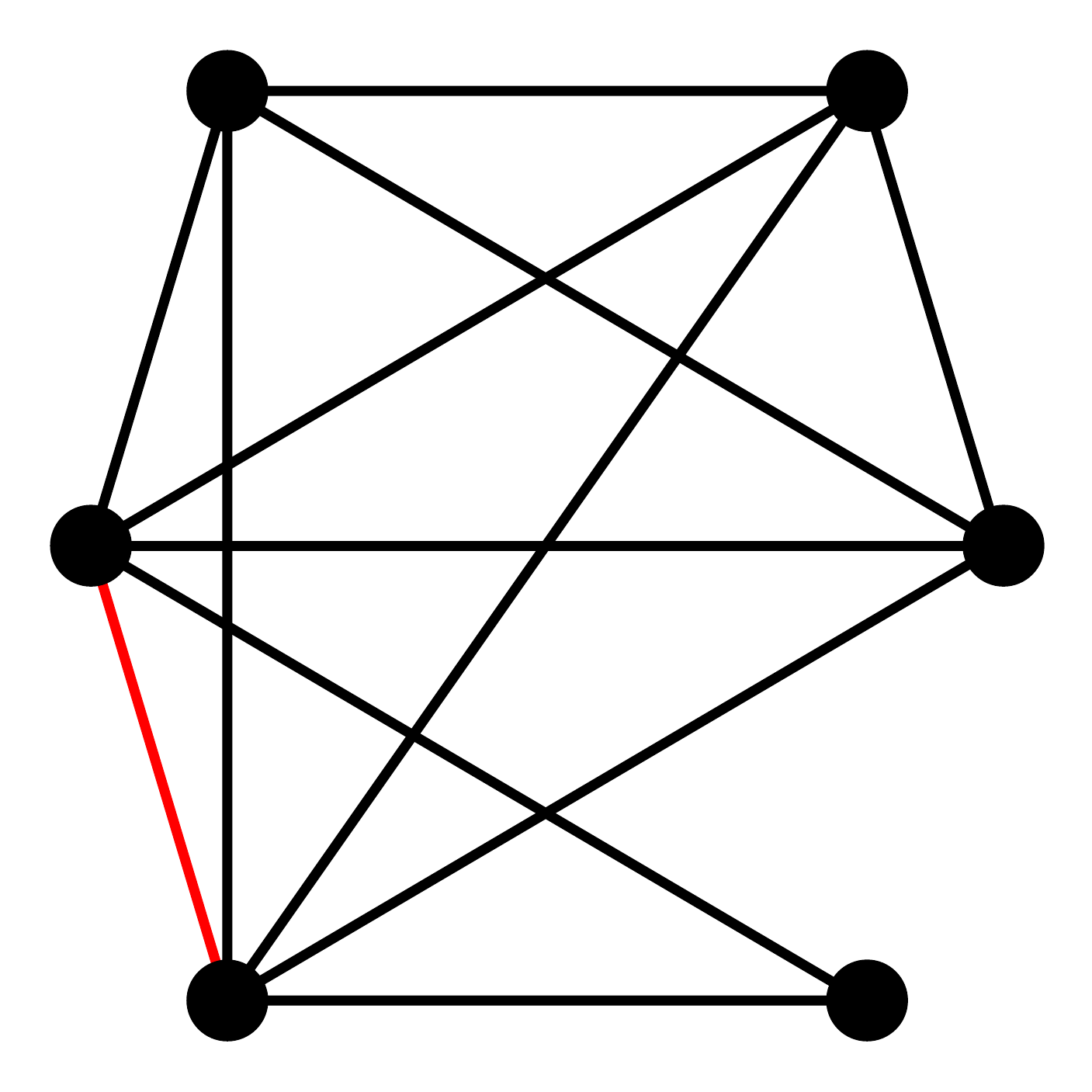}
			\\
		$p=1$
			& $p=2$
			& $p=3$
			& $p=4$
			& $p=5$
			& $p=5$
			& $p=6$
			\\
			 \includegraphics[width= \powtabwidth]{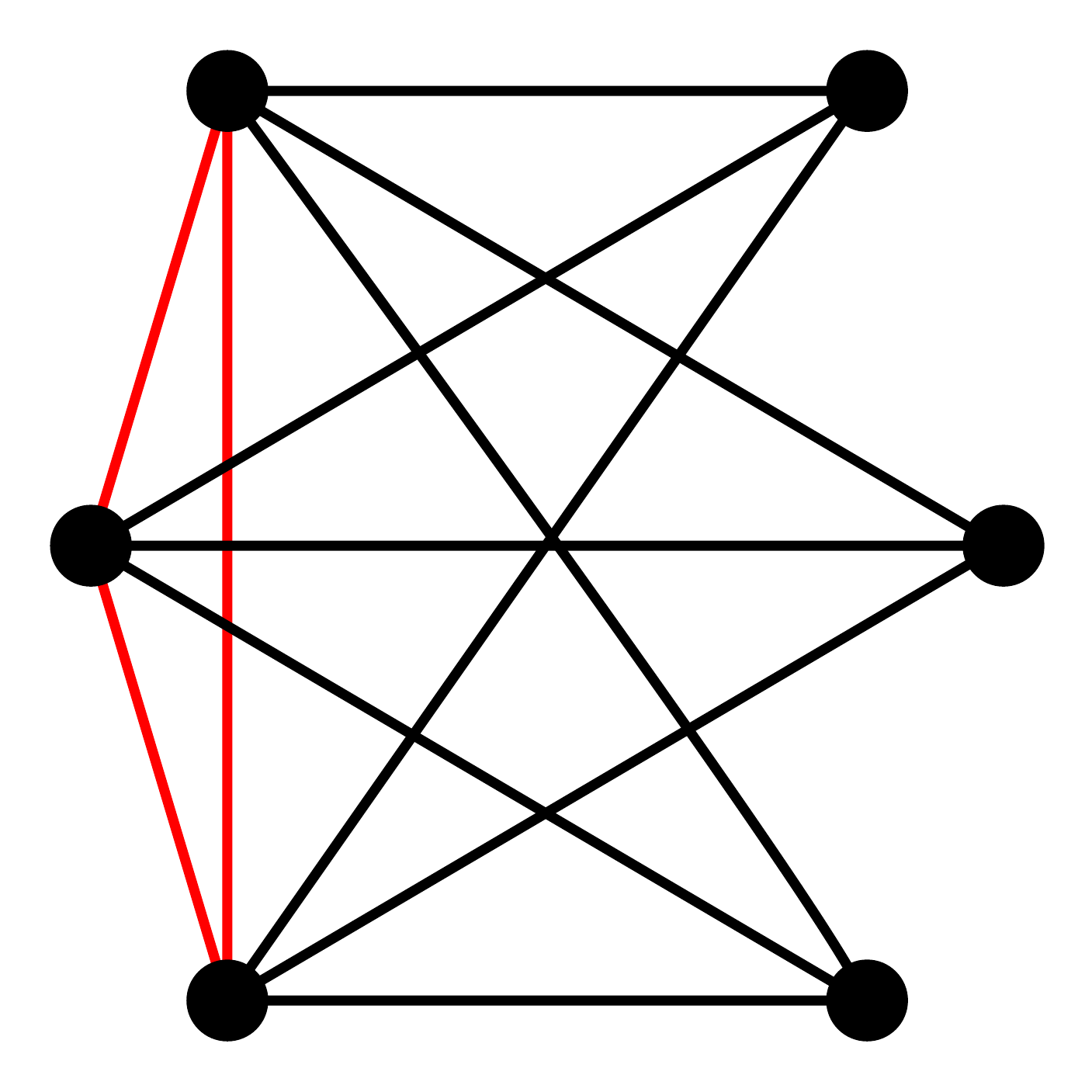}
			& \includegraphics[width= \powtabwidth]{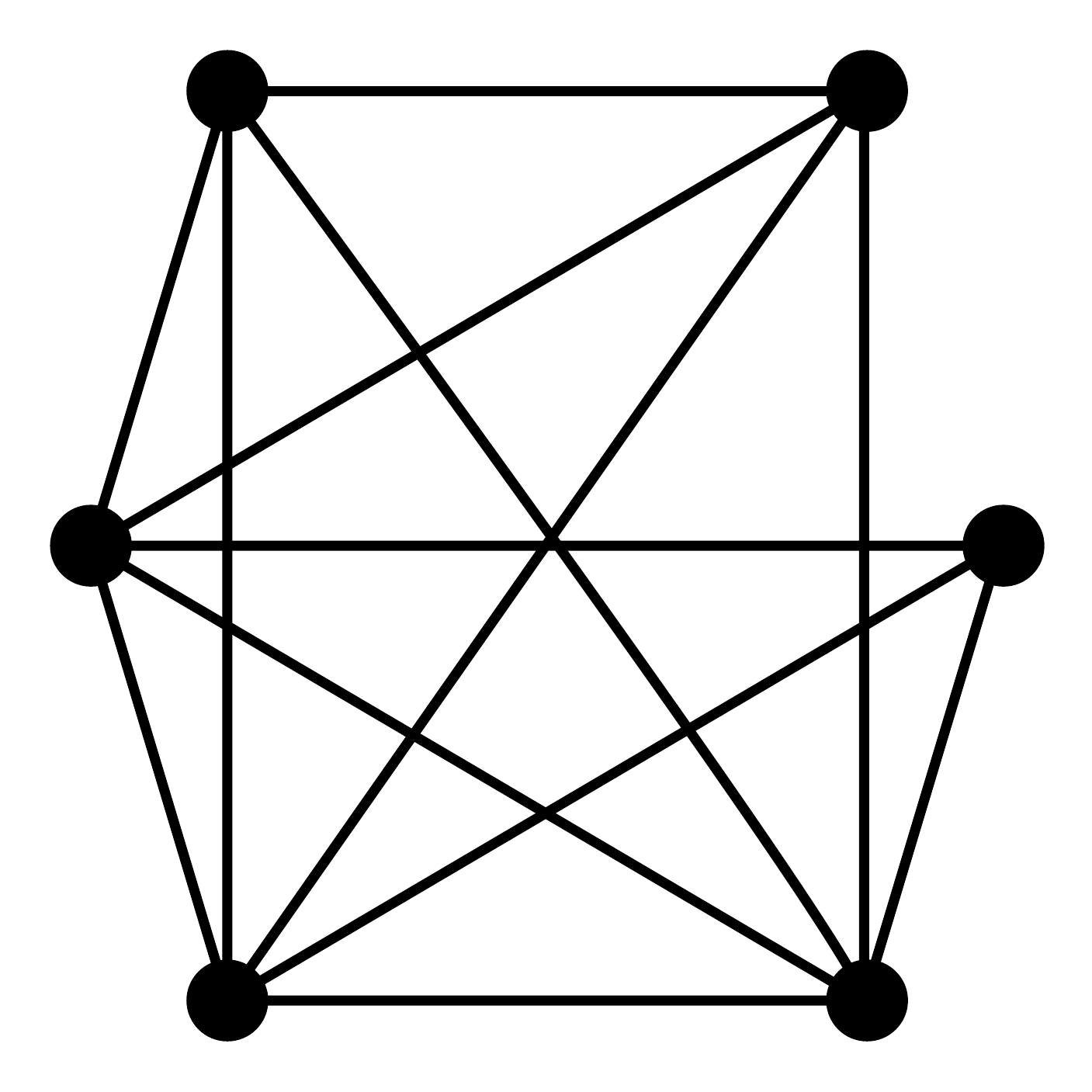}
			& \includegraphics[width= \powtabwidth]{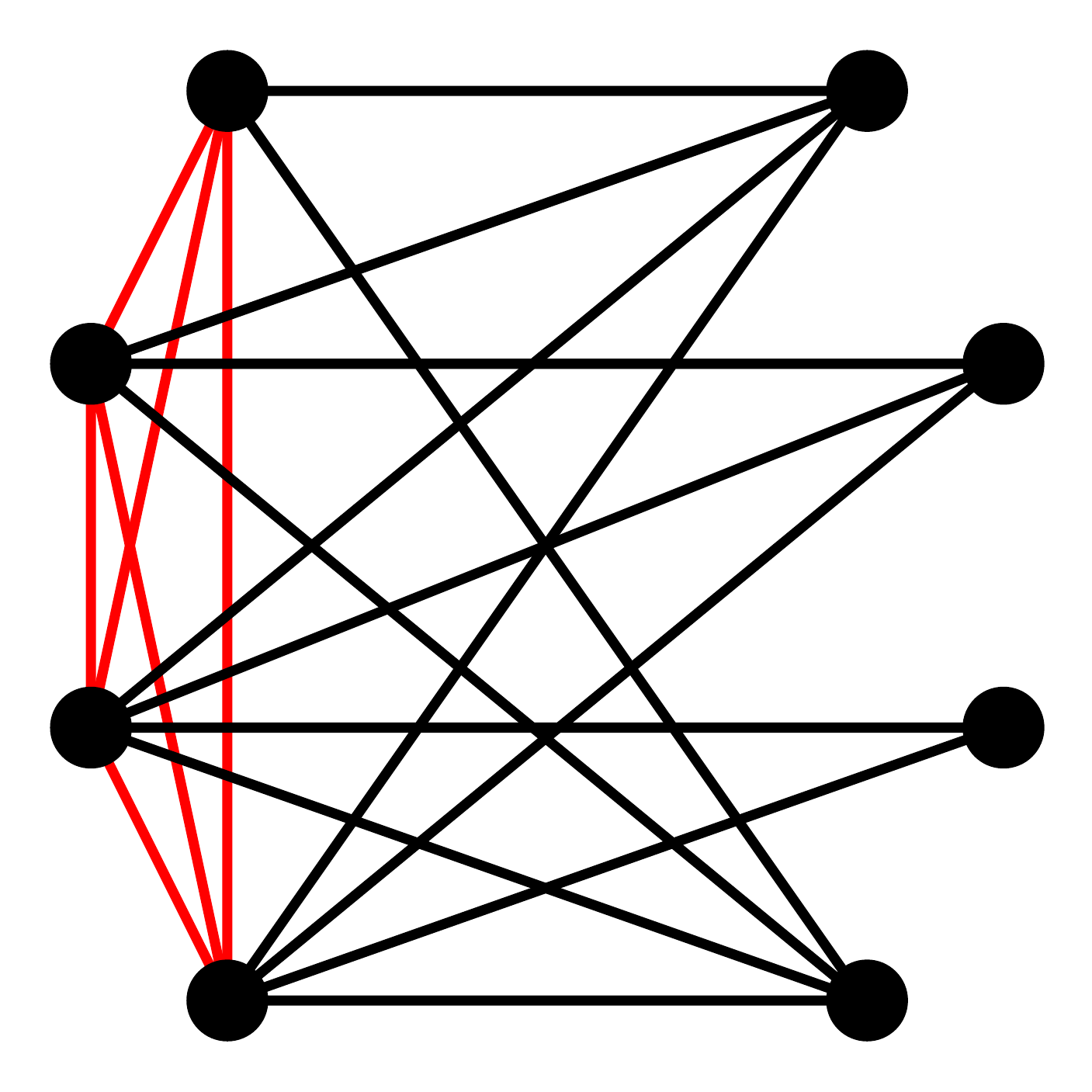}
			& \includegraphics[width= \powtabwidth]{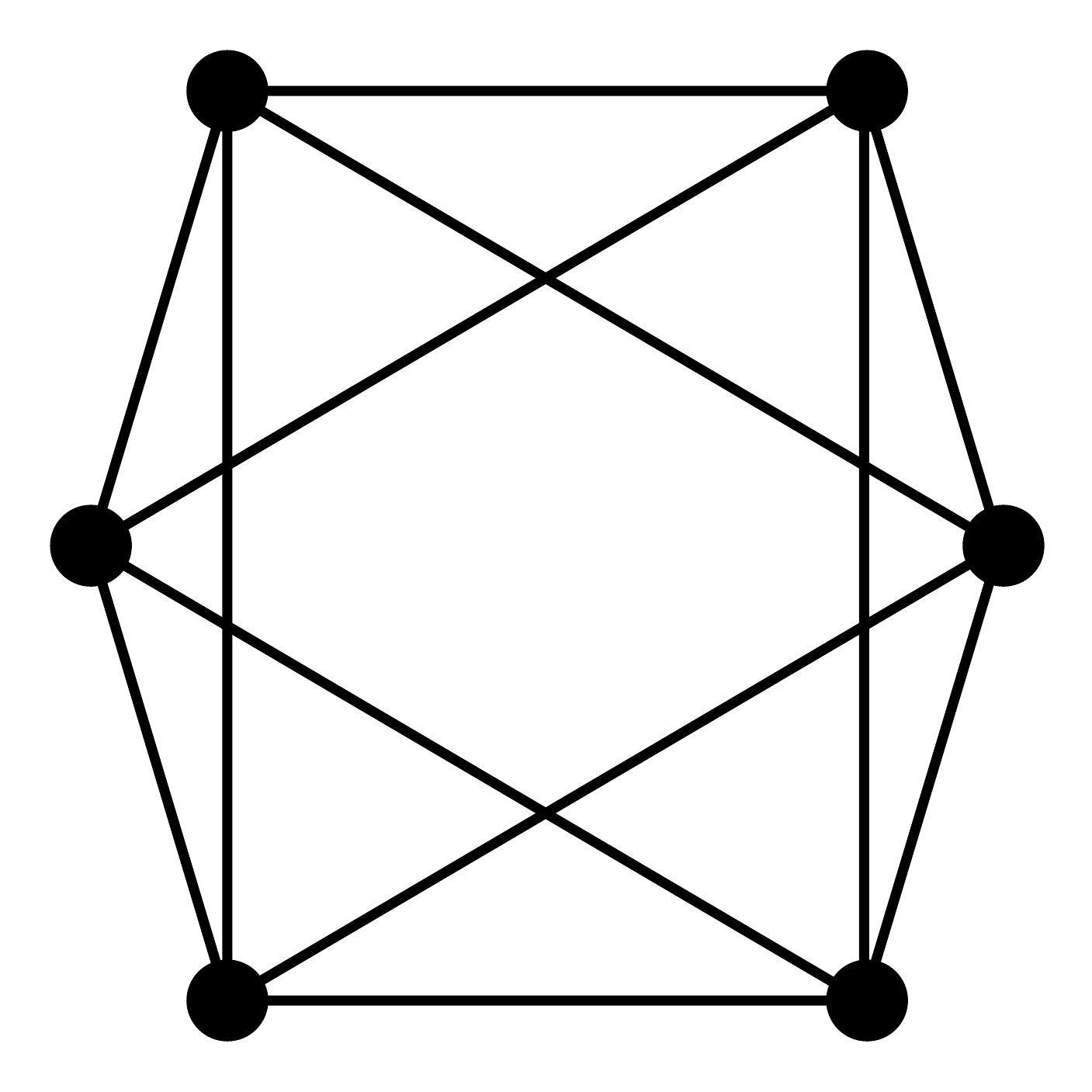}
			& \includegraphics[width= \powtabwidth]{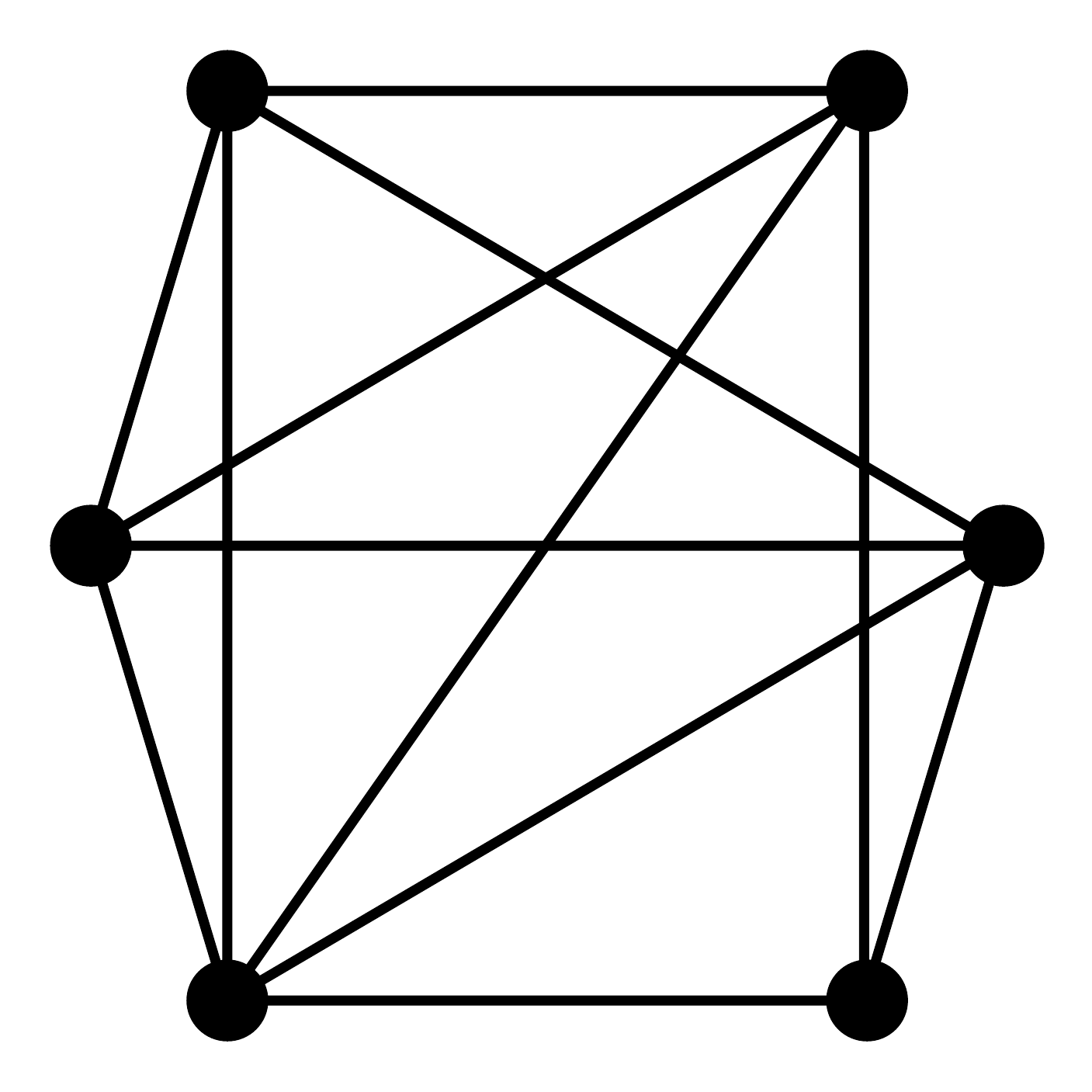}
			& \includegraphics[width= \powtabwidth]{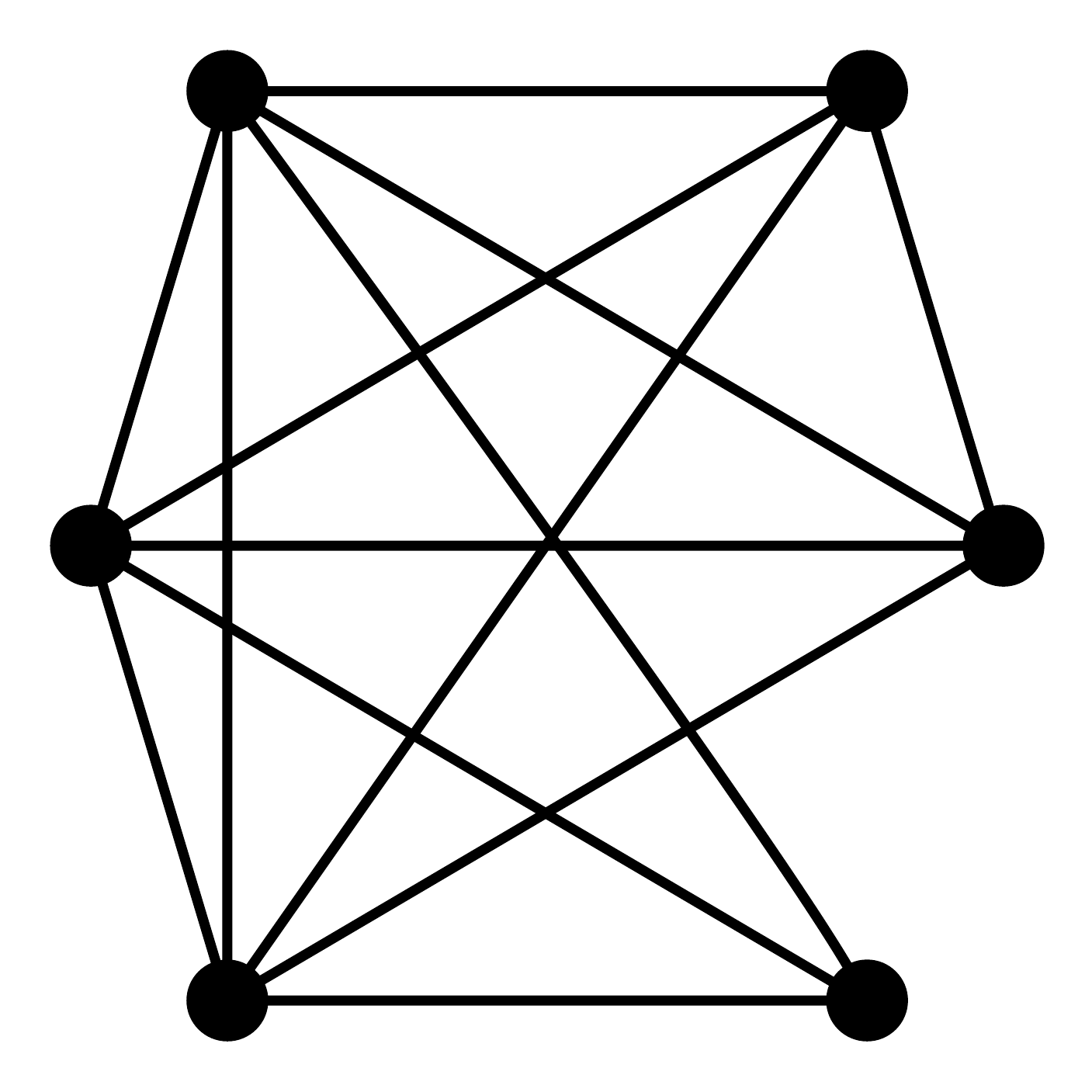}
			& \includegraphics[width= \powtabwidth]{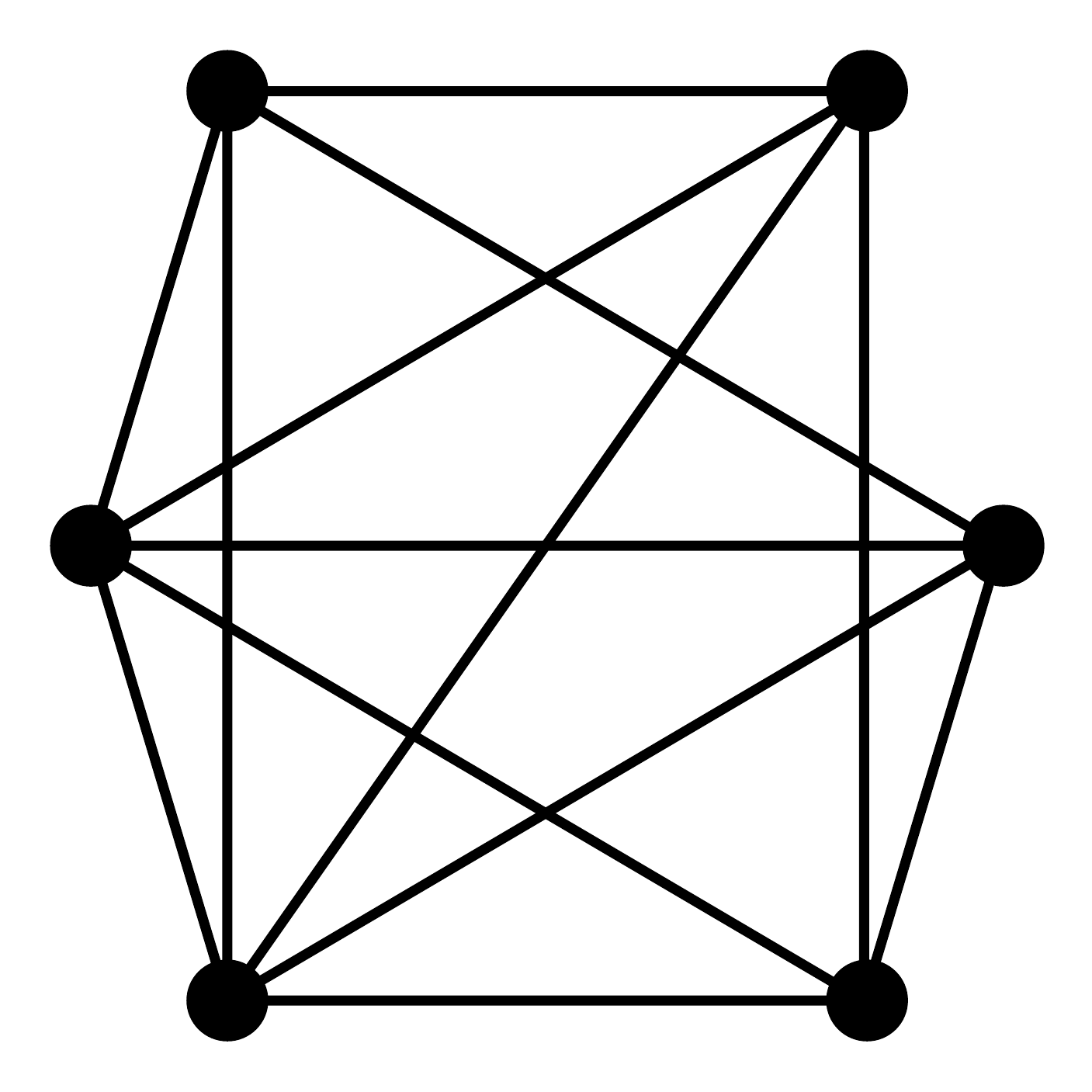}
			 \\
			 $p=6$
				& $p=7$
				& $p=8$
				& $p=8$
				& $p=8$
				& $p=9$
				& $p=10$
				\\
	\end{tabular}
	\caption{\label{fig:PowersOf2}The $p$-extremal elementary graphs with $1 \leq p \leq 10$~\cite{DudekSchmitt,HSWY}.}
\end{figure}

For $p \in \{11,13,17,19,23\}$, $p$ is prime, and there is no non-trivial factorization of $p$.
Hence, a $p$-extremal graph is a spire using exactly one $p$-extremal elementary graph
	with all other vertices within chambers isomorphic to $K_2$.
In most cases, the $p$-extremal elementary graph must appear at the top of the spire.
Only when $p = 11$ and the $11$-extremal elementary graph chosen 
	is the one with a barrier of size $4$ can this chamber be positioned anywhere
	in the spire.
	
For each $p \in \{15,22,25,26\}$, $p$ has at least one non-trivial factorization $p = \prod p_i$,
	but the sum of $c_{p_i}$ over the factors is strictly below $c_p$.
Hence, no $p$-extremal spire could contain more than one non-trivial chamber.
Also, all $p$-extremal elementary graphs have a barrier with relative size
	strictly below $\frac{1}{2}$, so the non-trivial chamber must appear at the top of the spire.
	
For each $p \in \{21,27\}$, there exists at least one factorization $p = \prod p_i$, 
	all with $\sum c_{p_i} \leq c_p$,
	and at least one factorization which reaches $c_p$ with equality.
For example, $21 = 3\cdot 7$, and $c_3 + c_7 = 2 + 3 = 5 = c_{21}$.
However, in these cases of equality, 
	$p_i$-extremal elementary graphs
	with large barriers 
	do not exist
	and it is impossible to achieve an excess of $c_p$
	over the entire spire using multiple non-trivial chambers.
Hence, the $p$-extremal graphs for these values of $p$
	have exactly one non-trivial chamber with $p$ perfect matchings
	and these chambers have small barriers, so they must 
	appear at the top of the spire.

For each $p\in \{14, 18,20,24\}$,
	there is at least one factorization $p = \prod p_i$ so that 
	$\sum c_{p_i} = c_p$
	and there are $p_i$-extremal graphs with large enough barriers
	to admit a spire with excess $c_p$.
These factorizations are $14 = 2\cdot 7$, $18 = 3\cdot 6 = 2\cdot 9$,
	$20 = 2 \cdot 10$, and $24 = 2 \cdot 12$.
There are also the $p$-extremal spires with exactly
	one non-trivial chamber, most of which 
	must appear at the top of the spire.
For $p \in \{14, 24\}$, there exists one $p$-extremal elementary graph
	with a large barrier
	that can appear anywhere in a $p$-extremal  spire.

The case $p = 16$ is special: every factorization admits at least one configuration 
	for a $16$-extremal spire.
The $q$-extremal elementary graphs
	for $q \in \{1,2,4,8\}$ as found by Hartke, Stolee, West, and Yancey~\cite{HSWY}
	are given in Figure \ref{fig:PowersOf2}.
Note that for each such $q$, there exists at least one $q$-extremal graph with a barrier
	with relative size $\frac{1}{2}$.
This allows any combination of values of $q$ that have product 16
	give a spire with $16$ perfect matchings
	and excess equal to the sum of the excesses of the chambers,
	which always adds to $c_{16} = 4$.
There are two $8$-extremal elementary graphs
	and three $16$-extremal elementary graphs 
	which have small barriers and 
	must appear at the top of a $16$-extremal spire.
All other chambers of a $16$-extremal spire can 
	take any order.

\section{Future work}

The $O(\sqrt{p})$ bound $N_p$ on the number of vertices in 
	a $p$-extremal elementary graph
	was sufficient for the computational technique described in this work
	to significantly extend the known values of $c_p$.
However, all of the
	elementary graphs we discovered to be $p$-extremal for $p \leq 27$
	have at most $10$ vertices,
	which could be generated using existing software,
	such as McKay's \texttt{geng} program~\cite{nauty}.
With a smaller $N_p$ value,
	the distributed search can also be improved.
Computation time would still be exponential in $p$ 
	because the depth of the search is a function of $p$,
	but the branching factor at each level would be reduced.
This delays the exponential behavior and potentially 
	makes searches over larger values of $p$ become tractable.
This motivates the following conjecture.

\begin{conjecture}
	Every $p$-extremal elementary graph
		has at most $2\log_2 p$ vertices. 
\end{conjecture}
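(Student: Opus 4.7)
The plan is to establish the bound through a multiplicative analysis along a graded ear decomposition of the extendable subgraph. Let $G$ be a $p$-extremal elementary graph on $n$ vertices with extendable subgraph $H$. By Theorem~\ref{thm:lovasztwoears}, $H$ admits a non-refinable graded ear decomposition $H_0 \subset H_1 \subset \cdots \subset H_k = H$ with $H_0 \cong C_{2\ell}$ and $\Phi(H_0) = 2$. Writing $n_i = n(H_i)$ and $\phi_i = \Phi(H_i)$, the target $n \leq 2\log_2 p$ becomes $\phi_k \geq 2^{n_k/2}$, which I would prove by induction on $i$.

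The central inductive claim is that $\phi_{i+1}/\phi_i \geq 2^{(n_{i+1}-n_i)/2}$ for every augmentation $H_i \subset H_{i+1}$. For a single ear $\eps$ of order $2k$ with endpoints $x, y$, I would use the identity $\Phi(H+\eps) = \Phi(H) + \Phi(H-x-y)$ and aim to show $\Phi(H-x-y) \geq (2^k-1)\Phi(H)$. Lemma~\ref{lma:oneartype} places $x$ and $y$ in distinct maximal barriers of $H$, which provides a starting point for such a lower bound, since matchings of $H$ in which $x$ and $y$ match across their respective barriers give rise to matchings of $H - x - y$. A parallel four-term analysis for double-ear augmentations (using Lemma~\ref{lma:twoeartype}) should give an even stronger multiplicative factor. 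For the base case $H_0 = C_{2\ell}$, note $\phi_0 = 2 < 2^\ell$ whenever $\ell \geq 2$, so I would additionally argue that every $p$-extremal graph admits an ear decomposition rooted at $C_4$, by combining the canonical deletion from Section~\ref{ssec:canondelete} with the density forced by excess $c_p$ via Theorem~\ref{thm:sizebound}.

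The chief obstacle is the single-ear inductive step for long ears: in a general 1-extendable graph $H$, $\Phi(H-x-y)$ can be as small as $1$ even when $\Phi(H)$ is large, so the desired inequality does not follow from Lov\'asz's theorem alone. My approach would be to prove a structural lemma tailored to $p$-extremal elementary graphs--namely, that every such graph admits a graded ear decomposition using only trivial or order-$2$ ears. This is plausible because a $p$-extremal elementary graph has $n^2/4 + c_p$ edges, near the Tur\'an threshold, and dense 1-extendable graphs typically decompose via short ears; moreover, all $p$-extremal elementary graphs discovered for $p \leq 27$ (Figure~\ref{fig:ExtremalElemGraphs}) confirm this pattern. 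Combining this restriction with a doubling lemma for short ears (provable by local matching exchanges across the barrier structure of Section~\ref{ssec:barriers}) would yield the conjectured bound. A more ambitious alternative is to invoke permanent lower bounds of Schrijver--Egorychev--Falikman type for the dense extendable subgraph directly, bypassing the ear decomposition entirely.
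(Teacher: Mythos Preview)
The statement you are attempting to prove is presented in the paper as an open \emph{conjecture} in the ``Future work'' section; the paper supplies no proof, only the empirical remark that it holds for all $p \leq 27$ and is tight for $p = 2^k$ with $k \in \{1,2,3,4\}$. There is therefore nothing to compare your proposal against, and any argument you give would have to stand entirely on its own.

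Your proposal does not stand on its own; you have correctly identified its principal gap. The identity $\Phi(H+\eps) = \Phi(H) + \Phi(H-x-y)$ for a single even ear is independent of the ear's order, so to achieve the multiplicative factor $2^{(n_{i+1}-n_i)/2} = 2^k$ you would indeed need $\Phi(H-x-y) \geq (2^k-1)\Phi(H)$, which is false in general (take $H = C_{2\ell}$, where $\Phi(H-x-y) \leq 1$). Your proposed remedy---showing that every $p$-extremal elementary graph has a graded ear decomposition using only ears of order $0$ or $2$---is itself an unproven structural assertion of roughly the same strength as the conjecture, and neither the density bound from Theorem~\ref{thm:sizebound} nor the canonical deletion of Section~\ref{ssec:canondelete} furnishes it: density controls the \emph{total} number of edges, not the length of individual ears in a particular decomposition, and the canonical deletion merely selects an ear of minimum order among those whose removal preserves 1-extendability, without certifying that this minimum is at most $2$. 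The base-case repair (rooting at $C_4$) is similarly unsupported. The alternative via permanent lower bounds is not developed enough to assess, but note that Schrijver--Egorychev--Falikman bounds apply to doubly stochastic matrices (hence regular bipartite graphs), whereas the extendable subgraphs here are typically neither regular nor bipartite. In short, your outline is a reasonable heuristic plan, but the conjecture remains open and your sketch does not close it.
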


This conjecture is tight for $p = 2^k$, with $k \in \{1,2,3,4\}$ and holds for all $p \leq 27$.
Note that $n_8 = 6$, but there is an $8$-extremal elementary graph with eight vertices.
Similarly, $n_{16} = 8$, but there is a $16$-extremal elementary graph with ten vertices.

The structure theorem requires searching over all factorizations of $p$
	in order to determine which factorizations yield 
	a spire with the 
	largest excess.
However, all known values of $p$ admit $p$-extremal elementary graphs.
Moreover, all composite values $p = p_1p_2$ admit $c_p \geq c_{p_1}+c_{p_2}$.
Does this always hold?

\begin{conjecture}
	For all $p \geq 1$, there exists a $p$-extremal elementary graph.
\end{conjecture}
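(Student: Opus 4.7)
The plan is strong induction on $p$, leveraging Theorem \ref{thm:hswystructure}. The base case $p=1$ is witnessed by $K_2$, which is elementary with $c(K_2)=0=c_1$. Fix $p\ge 2$ and assume the conclusion for all $q<p$. Pick any $p$-extremal graph $G$ and let $G_1,\ldots,G_k$ be its chambers, so $p=\prod_i \Phi(G_i)$. The easy sub-case is when exactly one chamber $G_{i_0}$ has $\Phi(G_{i_0})>1$: then $\Phi(G_{i_0})=p$, every other chamber is $K_2$ with excess $0$, and Theorem \ref{thm:hswystructure}(5) forces $c(G_{i_0})=c_p$, so $G_{i_0}$ is itself a $p$-extremal elementary graph.

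The remaining task is to handle the sub-case in which every $p$-extremal graph has at least two non-trivial chambers. The plan is to choose a $p$-extremal spire $G$ minimizing the number of non-trivial chambers, pick two of them $G_i, G_j$, and construct a single elementary graph $G^\star$ on $V(G_i)\cup V(G_j)$ with $\Phi(G^\star)=\Phi(G_i)\Phi(G_j)$, $c(G^\star)\ge c(G_i)+c(G_j)$, and a maximum barrier of relative size $\tfrac{1}{2}$ whenever $G^\star$ is not placed at the top of the spire. Substituting $G^\star$ for the pair $G_i, G_j$ produces a $p$-extremal spire with one fewer non-trivial chamber, contradicting minimality. A natural candidate for $G^\star$, suggested by the small-$p$ examples in Figures \ref{fig:ExtremalElemGraphs} and \ref{fig:PowersOf2}, is obtained from $G_i\sqcup G_j$ by selecting a maximum barrier $X_i\subseteq V(G_i)$ of size $n(G_i)/2$ (which exists by the equality case of Theorem \ref{thm:hswystructure}(5) applied to $G$), joining every vertex of $X_i$ to every vertex of $V(G_j)$, and filling $X_i$ into a clique. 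The matching count $\Phi(G^\star)=\Phi(G_i)\Phi(G_j)$ should follow from splitting each perfect matching of $G^\star$ into the portion saturating $X_i\cup V(G_j)$ and the portion saturating $V(G_i)\setminus X_i$, and the excess count from combining the free edges inside $X_i$ with those inherited from $G_j$ through Theorem \ref{thm:barrierconflictgraph}.

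The principal obstacle is showing that $G^\star$ is elementary rather than decomposing into further chambers; equivalently, after the cross edges are added, no proper nonempty subset of $V(G_i)\setminus X_i$ or $V(G_j)$ should remain a barrier. I would attack this using Lemma \ref{lma:evolutionofbarriersEVEN} and the Tutte-type analysis in Section \ref{ssec:evolutionofbarriers}, tracking barrier destruction as the cross edges are introduced one at a time and using elementarity of $G_i$ and $G_j$ to rule out surviving barriers. A secondary difficulty is that the naive clique completion may overshoot or undershoot the target edge count for certain $G_i, G_j$ with rich barrier structure in $G_j$; in that event the fallback is to connect $X_i$ not to all of $V(G_j)$ but to a maximum barrier $X_j$ of $G_j$ of size $n(G_j)/2$ and distribute the remaining cross edges asymmetrically, verifying via Theorem \ref{thm:barrierconflictgraph} that the new cover set still attains the required excess. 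I expect the bulk of the combinatorial work to lie in proving that one of these variants always succeeds, uniformly in the factor pair $(\Phi(G_i),\Phi(G_j))$ and in the barrier structure of the ambient $G$.
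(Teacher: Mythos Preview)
The statement you are attempting to prove is stated in the paper as a \emph{conjecture}, not a theorem; the paper offers no proof and lists it as an open problem in the ``Future work'' section. So there is no paper proof to compare against, and the relevant question is whether your proposed argument actually works.

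It does not, and the failure is structural rather than technical. Your candidate $G^\star$ is, up to the extra free edges inside $X_i$, precisely the spire generated by $G_i,G_j$ on the barrier $X_i$. Theorem~\ref{thm:hswystructure}(2) asserts that the chambers of a spire are exactly its input elementary graphs, so $G^\star$ has $G_i$ and $G_j$ as distinct chambers and is therefore \emph{not} elementary. This is not an obstacle to be overcome by barrier bookkeeping; it is a consequence of the construction itself. Concretely: since $X_i$ is a barrier of size $n(G_i)/2$, the set $V(G_i)\setminus X_i$ splits into $|X_i|$ singleton components in $G_i-X_i$. In any perfect matching of $G^\star$, each of these $|X_i|$ isolated vertices must be matched into $X_i$, which exhausts $X_i$ completely. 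Hence no cross edge $x v$ with $x\in X_i$, $v\in V(G_j)$ ever lies in a perfect matching, every cross edge is free, and the extendable subgraph of $G^\star$ is the disjoint union of the extendable subgraphs of $G_i$ and $G_j$. Filling $X_i$ into a clique only adds more free edges (Proposition~\ref{prop:freeextendable}) and cannot repair this.

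Your fallback of joining $X_i$ only to a barrier $X_j\subseteq V(G_j)$ suffers from the same defect, since it is a subgraph of the spire and hence has even fewer candidate extendable edges. To merge the two chambers you would need to add an edge that is extendable in the result, but any such edge creates new perfect matchings and breaks the count $\Phi(G^\star)=\Phi(G_i)\Phi(G_j)$, and there is no evident way to compensate without a genuinely new idea. This tension---that merging chambers requires extendable cross edges, while controlling $\Phi$ and the excess forbids them---is precisely why the statement remains a conjecture.
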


\begin{conjecture}
	\label{conj:compositep}
	For all products $p = \prod_{i=1}^k p_i$ with $p \geq 1$, 
		$c_{p} \geq \sum_{i=1}^k c_{p_i}$.
\end{conjecture}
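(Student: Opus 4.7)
The plan is to induct on the number of factors $k$, reducing the statement to the pairwise case $k=2$. The case $k=1$ is trivially $c_p \geq c_p$. Assuming the pairwise inequality, for general $k$ I would write $p = p_1 \cdot (p_2 p_3 \cdots p_k)$ and chain $c_p \geq c_{p_1} + c_{p_2 \cdots p_k}$ with the inductive hypothesis $c_{p_2 \cdots p_k} \geq \sum_{i=2}^k c_{p_i}$ to obtain the full claim.

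For the pairwise case $p = p_1 p_2$, the goal is to exhibit a graph $G$ with $\Phi(G) = p$ and $c(G) \geq c_{p_1} + c_{p_2}$. The natural construction is a two-chamber spire from Theorem \ref{thm:hswystructure}: take a $p_1$-extremal elementary graph $G_1$ with a largest barrier $X_1$, and a $p_2$-extremal elementary graph $G_2$. A direct edge count for the spire generated by $(G_1, G_2)$ on $X_1$ yields
\[
    c(G) \;=\; c_{p_1} + c_{p_2} - n(G_2)\Bigl(\tfrac{n(G_1)}{2} - |X_1|\Bigr),
\]
so $c(G) \geq c_{p_1} + c_{p_2}$ as soon as some $p_1$-extremal elementary graph has a barrier of the maximum conceivable size $n(G_1)/2$. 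By symmetry, swapping the roles of $G_1$ and $G_2$ gives the same conclusion whenever some $p_2$-extremal elementary graph has a half-size barrier. Thus the conjecture reduces to the following sub-claim: for at least one of the two factors, there exists an extremal elementary graph with a maximum barrier of relative size exactly $\tfrac12$.

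The main obstacle is establishing this sub-claim. By Proposition \ref{prop:freeextendable} and Lemma \ref{lma:allodd}, a barrier of size $n/2$ in an elementary graph on $n$ vertices is forced to be a clique of free edges meeting an independent set of $n/2$ vertices through extendable edges---a very restricted bipartite-like skeleton. My approach would be to trace the canonical ear decomposition from Corollary \ref{cor:twoears} of a hypothetical $p_i$-extremal elementary graph with no half-size barrier, and use the evolution of barriers in Lemma \ref{lma:evolutionofbarriersEVEN} together with the edge-count bound in Lemma \ref{lma:pruning} to derive a contradiction---namely, that a modification of such a graph (inserting or removing an appropriate ear) yields a graph of strictly larger excess with $p_i$ perfect matchings, contradicting $p_i$-extremality of the original.

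If this structural argument fails for both factors simultaneously, the fallback is to abandon the spire construction in favor of a vertex-substitution scheme: identify a vertex $v$ of $G_1$ that lies in a large barrier, replace it by a copy of $G_2$ attached along a maximum barrier of $G_2$, and verify via the Tutte-barrier analysis of Section \ref{ssec:barriers} that the resulting graph has exactly $p_1 p_2$ perfect matchings and excess at least $c_{p_1} + c_{p_2}$. The hard part of this alternative is the perfect-matching count, since substitution at a vertex only multiplies the matching counts cleanly when the substituted vertex sits in a suitable barrier position; ruling out degenerate cases where the substitution creates spurious perfect matchings is the main combinatorial hurdle.
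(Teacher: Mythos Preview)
The statement you are trying to prove is labeled a \emph{conjecture} in the paper, and the paper does not prove it. Immediately after stating it, the paper remarks that the closest known bound is the weaker inequality $c_p \geq c_{p_1} + \sum_{i=2}^k w(p_i)$ from \cite[Proposition~7.1]{HSWY}, where $w(n)$ is the binary weight of $n$. So there is no ``paper's own proof'' to compare against; the problem is open.

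Your reduction to the pairwise case and the spire excess formula
\[
c(G) = c_{p_1} + c_{p_2} - n(G_2)\Bigl(\tfrac{n(G_1)}{2} - |X_1|\Bigr)
\]
are both correct. The genuine gap is exactly where you locate it: the sub-claim that at least one of $p_1,p_2$ admits an extremal elementary graph with a barrier of relative size $\tfrac12$. The paper's own computations falsify this sub-claim for individual values of $p$. In the discussion of Section~\ref{ssec:results} it is stated explicitly that for each $p\in\{15,22,25,26\}$ \emph{all} $p$-extremal elementary graphs have maximum barrier of relative size strictly below $\tfrac12$ (for $p=15$ the unique extremal elementary graph is $K_6$, whose maximal barriers are singletons). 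Hence for a product such as $p_1 p_2 = 15\cdot 22$ or $25\cdot 26$, neither factor satisfies the sub-claim, the two-chamber spire strictly loses excess, and your primary argument collapses. Your proposed route to the sub-claim---deriving a contradiction via Lemma~\ref{lma:pruning} and the barrier evolution of Lemma~\ref{lma:evolutionofbarriersEVEN}---therefore cannot succeed as stated, because the sub-claim itself is false.

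Your fallback vertex-substitution idea does not escape this obstruction: it still requires the substitution vertex to sit in a suitably large barrier so that the matching count multiplies and no excess is lost, and the same examples ($p\in\{15,22,25,26\}$) show that such barrier positions need not exist in any extremal elementary graph for a given factor. In short, your plan correctly isolates the crux, but that crux is precisely why Conjecture~\ref{conj:compositep} remains open.
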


The closest known bound to Conjecture \ref{conj:compositep}
	is $c_p \geq c_{p_1} + \sum_{i=2}^k w(p_i)$, where
	$w(n)$ is the number of 1's in the binary representation of $n$ 
	~\cite[Proposition 7.1]{HSWY}.

\section*{Acknowledgements}

This work was completed utilizing the Holland Computing Center of the University of Nebraska.
Thanks to the Holland Computing Center faculty and staff
	including Brian Bockelman, Derek Weitzel, and David Swanson.
Thanks to the Open Science Grid for access to significant computer resources.
The Open Science Grid is supported by the National Science Foundation 
	and the U.S. Department of Energy's Office of Science.


\bibliographystyle{abbrv}
\bibliography{perfectmatchings}

\end{document}